\documentclass[11pt,a4paper,twoside]{article}

\usepackage[T1]{fontenc}
\usepackage[utf8]{inputenc}

\usepackage{amsmath}
\usepackage{mathtools}
\usepackage{bm}
\usepackage{amsthm}

\usepackage{newtxtext}
\usepackage{newtxmath}

\usepackage[
  a4paper,
  margin=28mm,
  headheight=14pt
]{geometry}

\usepackage{microtype}
\usepackage{setspace}
\usepackage{graphicx}
\usepackage{booktabs}
\usepackage{siunitx}
\usepackage{float}
\usepackage{tabularx}
\usepackage{array}
\usepackage{adjustbox}
\usepackage{subcaption}
\usepackage[ruled,vlined]{algorithm2e}

\usepackage{tikz}
\usetikzlibrary{3d}
\usetikzlibrary{arrows.meta}
\usepackage{wrapfig}

\usepackage[
  font=small,
  labelfont=bf,
  labelsep=period
]{caption}

\usepackage[numbers,square,sort&compress]{natbib}
\newtheorem{theorem}{Theorem}[section]     % Numbered by section (e.g., Theorem 1.1)

\newtheorem{lemma}[theorem]{Lemma}         % Now it says "Lemma", not "Theorem"
\newtheorem{problem}[theorem]{Problem}
\newtheorem{corollary}[theorem]{Corollary}

\theoremstyle{definition}

\newtheorem{remark}[theorem]{Remark}

\usepackage{titlesec}

\titleformat{\section}
  {\normalfont\large\bfseries}
  {\thesection.}
  {0.6em}
  {}

\titleformat{\subsection}
  {\normalfont\normalsize\bfseries}
  {\thesubsection.}
  {0.6em}
  {}

\titleformat{\subsubsection}
  {\normalfont\normalsize\itshape}
  {\thesubsubsection.}
  {0.6em}
  {}

\titleformat{\paragraph}
  {\normalfont\normalsize\bfseries}
  {\theparagraph}
  {1em}
  {}

\titlespacing*{\paragraph}
  {0pt}
  {3.25ex plus 1ex minus .2ex}
  {1.5ex plus .2ex}
  
\usepackage{fancyhdr}

\newcommand{\shorttitle}{M.F.P. ten Eikelder and A. Brunk}%Symmetric structure-preserving discretization of N-phase incompressible fluid mixtures with arbitrary density ratios
\newcommand{\shortauthors}{M.F.P. ten Eikelder and A. Brunk}

\fancypagestyle{plain}{%
  \fancyhf{}%
  \fancyfoot[C]{\thepage}%
}
\usepackage{authblk}

\usepackage{xcolor}
\definecolor{darkgreen}{rgb}{0, 0.7, 0}
\definecolor{orange}{rgb}{0.98, 0.6, 0.01}
\definecolor{napiergreen}{rgb}{0.16, 0.5, 0.0}

\usepackage{pifont}

\usepackage{accents}
\newlength{\dhatheight}

\newcommand{\R}{\mathbb{R}}

\numberwithin{equation}{section}
\allowdisplaybreaks

\def\div{\operatorname{div}}

\def\Th{\mathcal{T}_h}
\def\Itau{\mathcal{I}_\tau}

\def\la{\langle}
\def\ra{\rangle}

\def\Th{\mathcal{T}_h}
\def\Vh{\mathcal{V}_h}
\def\Qh{\mathcal{Q}_h}
\def\Xh{\mathcal{X}_h}

\def\nn{\nonumber}

\usepackage{mathtools}

\DeclarePairedDelimiter{\norm}{\|}{\|}
\DeclarePairedDelimiter{\snorm}{|}{|}

\def\w{\mathbf{w}}
\def\vv{\mathbf{v}}

\def\dt{\partial_t}

\def\dtau{d_\tau^{n+1}}

\def\ddt{\frac{\mathrm{d}}{\mathrm{d}t}}

\def\softd{{\leavevmode\setbox1=\hbox{d}%
\hbox to 1.05\wd1{d\kern-0.4ex{\char039}\hss}}}%cstocs

\newcommand{\mA}{\alpha}
\newcommand{\mB}{{\beta}}
\newcommand{\mC}{{\gamma}}

\newcommand{\bJ}{\mathbf{J}}

\newcommand{\bv}{\mathbf{v}}
\newcommand{\trho}{\tilde{\rho}}

\newcommand{\phm}{\phantom{-}}

\usepackage[
  colorlinks=true,
  linkcolor=blue,
  citecolor=blue,
  urlcolor=blue
]{hyperref}

\usepackage[nameinlink,capitalise,noabbrev]{cleveref}

\newcommand{\email}[1]{\href{mailto:#1}{\texttt{#1}}}

\makeatletter
\newcommand{\myref}[1]{\cref{#1}\mynameref{#1}{\csname r@#1\endcsname}}
\newcommand{\Myref}[1]{\Cref{#1}\mynameref{#1}{\csname r@#1\endcsname}}

\newcolumntype{R}[2]{%
    >{\adjustbox{angle=#1,lap=\width-(#2)}\bgroup}%
    l%
    <{\egroup}%
}

\newcommand{\thickhline}{%
    \noalign {\ifnum 0=`}\fi \hrule height 1pt
    \futurelet \reserved@a \@xhline
}

\newcolumntype{"}{@{\hskip\tabcolsep\vrule width 1pt\hskip\tabcolsep}}
\makeatother

\title{\vspace{-1.0cm}
\bfseries
Symmetric structure-preserving discretization of N-phase incompressible fluid mixtures with arbitrary density ratios
}

\author[1]{Marco F.P. ten Eikelder\thanks{\email{marco.eikelder@tu-darmstadt.de}}}
\author[2]{Aaron Brunk\thanks{\email{abrunk@uni-mainz.de}}}

\affil[1]{Institute for Mechanics, Computational Mechanics Group, Technical University of Darmstadt, Germany}
\affil[2]{Institute of Mathematics, Johannes Gutenberg-University Mainz, Germany}

\date{}

\begin{document}

\maketitle
\thispagestyle{plain}

\begin{abstract}
Diffuse-interface models are a widely used framework for interfacial dynamics in complex fluids, in which interfaces are represented through smooth transition layers and capillary effects are encoded by a free-energy functional. For incompressible mixtures with more than two phases, however, robust computation is substantially more difficult because the numerical method should preserve the balance structure of the continuum model, maintain the saturation constraint, dissipate energy, and treat all phases symmetrically even when density ratios are arbitrary. Existing structure-preserving methods are largely developed for binary flows or for formulations that distinguish a reference phase, so a genuinely symmetric N-phase discretization remains lacking. The practical problem is therefore to construct a fully-discrete method for N-phase incompressible Navier–Stokes–Cahn–Hilliard mixture models that retains the key thermodynamic and conservation properties of the continuum equations for arbitrary density ratios.

Here we propose a symmetric fully-discrete method for the N-phase incompressible\\ Navier–Stokes–Cahn–Hilliard mixture model with arbitrary density ratios. The method yields a fully-discrete problem in which every solution satisfies exact phase volume conservation, phase mass conservation, total volume conservation, total mass conservation, and a discrete energy-dissipation law. In addition, if the volume-saturation constraint holds for the initial data, then it is preserved at every time step. We numerically verify these structure-preserving properties and demonstrate the robustness of the method in representative multiphase flow problems. The resulting scheme provides a computational framework for incompressible N-phase mixture flows with complex interfacial dynamics and arbitrary density contrasts.
\end{abstract}

\section{Introduction}\label{sec: Introduction}

$N$-phase incompressible fluid mixtures arise in applications where several immiscible or partially miscible phases interact through moving interfaces. Diffuse-interface models provide a natural description of such systems by replacing sharp material boundaries with smooth transition layers and representing interfacial effects through phase-field energies. The model class considered in this work belongs to the Navier--Stokes--Cahn--Hilliard (NSCH) family, whose binary form is rooted in the hydrodynamic phase-field coupling introduced by \cite{hohenberg1977theory}, now commonly referred to as model H. Early two-phase NSCH formulations were mainly developed for fluids with matched densities, whereas many applications require models that allow density contrasts. This has led to several non-matching-density extensions, including influential contributions by \cite{lowengrub1998quasi,ding2007diffuse,abels2012thermodynamically,abels2024mixture}. Although these formulations pursue the same physical objective, they can differ in their balance-law structure, dissipation statements, and behavior in limiting single-fluid regimes. A continuum-mixture-theory framework has recently clarified these relationships and identified a two-phase NSCH mixture model that is invariant with respect to the choice of fundamental variables \cite{eikelder2023unified}.

The numerical approximation of binary NSCH systems has generated a substantial literature, in particular on methods that retain key structural properties of the continuum equations. For matched densities, energy-stable and structure-preserving schemes have been developed in a variety of settings; see, for example, \cite{feng2006fully,kay2007efficient,chen2016efficient,diegel2017convergence,brunk2023second} for matched density models. The non-matching-density case introduces additional coupling through the density and mass balance, and has motivated further schemes such as those in \cite{khanwale2023projection,guillen2014splitting,garcke2016stable,shen2013mass,giesselmann2015energy,ten2024divergence}. Closest to the present work is the recent fully-discrete method \cite{brunk2026simple}, proposed by the authors, for the two-phase NSCH mixture model with arbitrary density ratios. There, unconditional energy stability is obtained despite the use of a positive density extension in the kinetic energy, as required by phase-field modifications.

For ternary and genuinely multiphase systems, fewer fully-discrete structure-preserving schemes are available. For the model developed by Boyer and co-authors \cite{boyer2014hierarchy}, several discretization schemes have been proposed and applied, mostly for the ternary case $N=3$ \cite{boyer2006study, boyer2010cahn, minjeaud2013unconditionally,LEE20084787}, with a multigrid extension considered in \cite{cms/1250880209}. Later developments include SAV/EIQ schemes \cite{Yang2017, Yang21}. Extensions to $N>3$ are only briefly outlined in \cite{boyer2014hierarchy}.
For the multiphase model developed by \cite{dong2018multiphase}, additional numerical schemes and extensions, such as wall contributions, have been developed and tested for $N\geq 3$ \cite{DONG201721, DONG201598, DONG2014691, ZHANG2020109115}. It should be noted that $N=3$ is a separate case as the number of pairwise interfaces equals the number of phases, whereas for $N>3$ the number of pairwise interfaces is larger than the number of phases.

For $N$-phase flow, diffuse-interface modelling is even more flexible and substantially more challenging. Multiphase NSCH-type models have been proposed in a variety of forms \cite{boyer2014hierarchy,dong2018multiphase,ten2024thermodynamically,Abels2025}. Most recently, a unified mixture-theory framework for $N$-phase NSCH mixture models with a single mixture momentum equation has been established in \cite{ten2025unified}. While such a general $N$-phase framework is now available, its practical use requires constitutive closure, interfacial calibration, and a numerical method that preserves the structural properties of the resulting equations. In \cite{mixtureaware2026}, we derive mixture-aware constitutive closures for $N$-phase NSCH mixture models by imposing reduction consistency under merging physically identical phases, leading to admissible multiphase free energies and mobilities. Furthermore, in \cite{surfacetension2026}, we develop a surface-tension calibration procedure that determines capillarity parameters from prescribed pairwise surface tensions and mesh-resolvable interface widths. These results provide the modeling input for $N$-phase simulations. The remaining question, and the focus of the present work, is how to discretize the resulting $N$-phase equations so that their defining conservation and dissipation structure is retained at the fully-discrete level.

This task is substantially more delicate in the $N$-phase setting than in the binary case. The phase volume fractions are coupled through the saturation constraint, the densities enter both phase and total mass balances, capillarity acts through chemical potentials derived from an $N$-phase free energy, and the flow is governed by a single mixture momentum equation. A useful discretization should therefore do more than remain energy stable. Ideally, it should also preserve phase-wise volume and mass balances, conserve total volume and total mass, and maintain the saturation constraint exactly. If these properties are lost at the discrete level, even small local defects can accumulate into nonphysical drift in composition, density, or total volume, especially over long time integrations and in strongly coupled multiphase regimes.

A further difficulty is symmetry. At the continuum level, the mixture model treats the phases on equal footing, and this symmetry is important both conceptually and practically. In many numerical formulations, however, symmetry is broken early: one phase is eliminated through the saturation constraint, auxiliary variables are introduced relative to a designated reference phase, or the discrete equations are organized in a way that privileges particular labels. Such choices may be convenient algebraically, but they also build a distinction between phases into the discrete formulation. For $N$-phase simulations, it is therefore desirable to develop a discretization that remains symmetric with respect to the phase set while preserving the underlying thermodynamic structure.

The present work addresses this problem. We develop a symmetric structure-preserving discretization for the $N$-phase incompressible NSCH mixture model with arbitrary density ratios. The method is formulated directly in terms of the full set of phase variables and a single mixture velocity, without introducing a distinguished reference phase. Its construction is close in spirit to the recent fully-discrete energy-stable methodology for the two-phase problem \cite{brunk2026simple}, but is adapted here to the genuinely multiphase setting. Its central property is that the fully-discrete problem inherits the key structural identities of the continuum model. In particular, we prove that every discrete solution satisfies exact phase volume conservation, phase mass conservation, total volume conservation, total mass conservation, and a discrete energy-dissipation inequality. Moreover, if the initial data satisfy the volume-saturation constraint, then this constraint is preserved pointwise at all subsequent time steps.

The structure of the remainder of the paper is as follows. In \cref{sec:model} we present the NSCH model with its properties. Next, in \cref{sec:reform} we provide an equivalent alternative formulation amenable for a fully-discrete method. Then, in \cref{sec:scheme} we present the numerical scheme and its structural properties. In \cref{sec:numerics} we demonstrate the structural properties of the scheme numerically. Finally, we close the paper with a conclusion and outlook in \cref{sec:conclusion and outlook}.

\section{The Navier--Stokes--Cahn--Hilliard model}\label{sec:model}

This section is concerned with the Navier--Stokes--Cahn--Hilliard model. Following the presentation of \cite{mixtureaware2026}, \cref{subsec:gov eq} presents the governing equations, and \cref{subsec: struct prop} provides its structural properties. Next, \cref{subsec: Helmholtz free energy} discusses the Helmholtz free energy. The associated surface tension and interface width are provided in \cref{sec:surface_tension} and \cref{sec:interface_width}, respectively. Finally, the equilibrium conditions are stated in \cref{sec:equilibrium_conditions}, and \cref{sec:hierarchy_consistency} provides the mixture-aware properties.  %Finally, \ref{subsec:dim less} presents the non-dimensional formulation.

\subsection{Governing equations}\label{subsec:gov eq}
We consider the $N$-phase Navier--Stokes--Cahn--Hilliard mixture system with non-matching densities given by ten Eikelder \cite{ten2025unified}:
\begin{subequations}\label{eq:sys1}
  \begin{align}\partial_t (\rho \bv) + {\rm div} \left( \rho \bv\otimes \bv \right) + \sum_\mB \phi_\mB \nabla \mu_\mB + \nabla \lambda
    %&\nn\\
    - {\rm div} \boldsymbol{\tau}-\rho\mathbf{b} &=~ 0, \label{eq: intro mass: mom}\\
  \partial_t \phi_\mA  + {\rm div}(\phi_\mA  \bv) +\rho_\mA^{-1}{\rm div} \bJ_\mA &=~0.\label{eq: intro mass: mass}
  \end{align}
\end{subequations}
The system couples the mixture momentum balance \eqref{eq: intro mass: mom} to the $N$-phase mass balances \eqref{eq: intro mass: mass}. The equations are posed in a spatial domain $\Omega \subset {\R}^d$ of dimension $d=2,3$, with boundary $\partial\Omega$ and outward unit normal $\mathbf{n}$. The unknowns are the fluid (mass-averaged) velocity $\bv: \Omega \rightarrow {\R}^d$, the volume fractions $\phi_\mA: \Omega \rightarrow {\R}$, $\mA =1,...,N$, and the Lagrange multiplier pressure $\lambda: \Omega \rightarrow {\R}$. We assume the absence of void spaces so that the volume fractions are subject to the saturation constraint
\begin{align}\label{eq: saturation constraint}
    \sum_\mA \phi_\mA =1.
\end{align}
In addition, the physical values of the volume fractions are $0\leq \phi_\mA\leq 1$.  Hence, the volume fractions lie in the Gibbs simplex $\mathcal{G} = \{\boldsymbol{\phi}\in\mathbb{R}^N: 0 \leq \phi_\mA \leq 1,\sum_\mA \phi_\mA = 1\}$. The system is supplemented with the initial conditions $\bv(\mathbf{x},0) = \bv_0(\mathbf{x})$ and $\phi_\mA(\mathbf{x},0) = \phi_{\mA,0}(\mathbf{x})$. The specific phase densities $\rho_\mA$ are constant, the partial mass densities are defined by $\trho_\mA = \rho_\mA \phi_\mA$, and the total mixture density is $\rho = \sum_\mA \trho_\mA$.
The body force is taken to be gravitational and is represented by $\mathbf{g} = -g \mathbf{j}$, where $g$ is the gravitational constant, $\mathbf{j} = \nabla y$ is the vertical unit vector, and $y$ denotes the vertical coordinate. The viscous stress is given by $\boldsymbol{\tau} = \nu (2 \nabla^s \bv+\bar{\lambda}({\rm div}\bv) \mathbf{I}) $, with dynamic viscosity $\nu$, coefficient $\bar{\lambda} = -2/d$, and symmetric velocity gradient $\nabla^s \bv=(\nabla \bv + (\nabla \bv)^T)/2$. The Helmholtz free energy $\Psi$ associated with \eqref{eq:sys1} is assumed to belong to the constitutive class
\begin{align}
    \Psi = \Psi\left(\left\{\phi_\mA\right\},\left\{\nabla \phi_\mA\right\}\right).
\end{align}
The corresponding chemical potentials are denoted by $\mu_\mA$ and $g_\mA$, with
\begin{align}\label{eq: chem pot}
    \mu_\mA = \dfrac{ \partial \Psi}{\partial \phi_\mA} - {\rm div}\dfrac{\partial \Psi}{\partial\nabla \phi_\mA},
\end{align}
and $g_\mA = \rho_\mA^{-1}(\mu_\mA + \lambda)$. 

%\begin{remark}[Mixture velocity]The system is formulated in terms of the mass-averaged velocity $\vv$. Alternatively, it can be expressed using other mixture velocities; for details on the volume-averaged velocity formulation, we refer to ten Eikelder \cite{ten2025unified}.\end{remark}

Invoking the saturation constraint in the addition of the mass balance equations \eqref{eq: intro mass: mass} provides:
\begin{align}\label{eq: LM 1}
    {\rm div}\bv + \displaystyle\sum_{\mA}  \rho_\mA^{-1} {\rm div} \bJ_\mA = 0,
\end{align}
where the diffusive fluxes are $
  \bJ_\mA =- \sum_\mB M_{\mA\mB}\nabla g_\mB$, with $M_{\mA\mB}$ denoting the degenerate mobility matrix.  We employ the mixture-aware mobility introduced in \cite{mixtureaware2026}, namely
\begin{subequations}
\begin{align}
M_{\alpha\beta}(\boldsymbol{\phi})=&~-m_{\alpha\beta}\phi_\alpha\phi_\beta \quad (\alpha\neq\beta),\\
M_{\alpha\alpha}(\boldsymbol{\phi})=&~\phi_\alpha\sum_{\beta\neq\alpha} m_{\alpha\beta}\phi_\beta.
\end{align}
\end{subequations}
The coefficients $m_{\alpha\beta}$ specify the pairwise mobilities and are chosen according to the constitutive closure described there.
In the case of pure phases ($\phi_\mA = 1$ for some $\mA = 1,...,N$) the mobility tensor is zero, and consequently, the peculiar velocities vanish ($\bJ_\mA = 0$) so that the fluid velocity is divergence-free (${\rm div} \bv = 0$). By treating the chemical potentials as independent unknowns, the model can be written as a closed system of $2N+2$ equations for the $2N+2$ state variables $(\bv, \left\{\phi_\mA\right\}_{\mA=1,...,N}, \lambda, \left\{\mu_\mA\right\}_{\mA=1,...,N})$:
\begin{subequations}\label{eq:sys2}
  \begin{align}\partial_t (\rho \bv) + {\rm div} \left( \rho \bv\otimes \bv \right) + \sum_\mB \phi_\mB \nabla \mu_\mB + \nabla \lambda
    %&\nn\\
    - {\rm div} \boldsymbol{\tau}-\rho\mathbf{b} &=~ 0, \label{eq:sys2: mom}\\
  \partial_t \phi_\mA  + {\rm div}(\phi_\mA  \bv) +\rho_\mA^{-1}{\rm div} (\bJ_\mA )&=~0,\label{eq:sys2: mass}\\
  {\rm div}\bv + \displaystyle\sum_{\mA}  \rho_\mA^{-1} {\rm div} \bJ_\mA &=~ 0, \label{eq:sys2: div}\\
   \mu_\mA - \dfrac{ \partial \Psi}{\partial \phi_\mA} + {\rm div}\dfrac{\partial \Psi}{\partial\nabla \phi_\mA} &~=0.\label{eq:sys2: chem}
  \end{align}
\end{subequations}
The pressure $\lambda$ acts as a Lagrange multiplier that enforces \eqref{eq:sys2: div} (which follows from \eqref{eq: saturation constraint}), see also \cite{ten2025unified}. The system \eqref{eq:sys2} is symmetric in terms of the volume fraction variables.

%\begin{remark}[Saturation constraint]
%  As an alternative, one can directly substitute the saturation constraint \eqref{eq: saturation constraint} to reduce the set of volume fraction state variables to $\left\{\phi_\mA\right\}_{\mA\neq \mB}$ for some fixed $\mB \in \left\{1,...,N\right\}$. This leads to a reduced system in which \eqref{eq:sys2: mom}-\eqref{eq:sys2: chem} constitute $2N+1$ equations for $2N+1$ state variables ($\bv, \left\{\phi_\mA\right\}_{\mA\neq \mB}, \lambda, \left\{\mu_\mA\right\}$). This system is not symmetric in terms of $\phi_\mA$.
%\end{remark}

\subsection{Structural properties}\label{subsec: struct prop}
The continuum system satisfies several structural identities that are central to the construction of the numerical method. Under suitable boundary conditions, the phase masses and volume-fraction integrals are conserved, while the total energy is dissipated according to:
\begin{subequations}    
\begin{align}
   \ddt \int_\Omega \trho_\mA~{\rm d}\Omega =&~ 0, \qquad \mA = 1,...,N,\\
   \ddt \int_\Omega \phi_\mA~{\rm d}\Omega =&~ 0, \qquad \mA = 1,...,N,\\
   \ddt \mathcal{E}\left(\vv,\left\{\phi_\mA\right\}\right) =&~ -\mathcal{D}\left(\vv, \left\{g_\mA\right\}\right)\leq 0, \label{eq: global energy evolution}
\end{align}
\end{subequations}
The energy functional $\mathcal{E}$ and the corresponding dissipation rate $\mathcal{D}$ are defined by
\begin{subequations}
\begin{align}
 \mathcal{E}\left(\vv,\left\{\phi_\mA\right\}\right):=&~\int_\Omega K\left(\vv,\left\{\phi_\mA\right\}\right) + G\left(\left\{\phi_\mA\right\}\right) + \Psi\left(\left\{\phi_\mA\right\},\left\{\nabla \phi_\mA\right\}\right)~{\rm d}\Omega, \label{eq:defEnergy}\\
 K\left(\vv,\left\{\phi_\mA\right\}\right) =&~ \frac{1}{2}\rho\left(\left\{\phi_\mA\right\}\right)\snorm{\vv}^2,\\ G\left(\left\{\phi_\mA\right\}\right) =&~ \rho\left(\left\{\phi_\mA\right\}\right) g y,\\
 \mathcal{D}\left(\vv, \left\{g_\mA\right\}\right):=&~ \displaystyle\int_{\Omega} 2 \nu \left( \nabla^s \bv - \frac{1}{d} ({\rm div} \mathbf{v}) \mathbf{I}\right):\left(\nabla^s \bv - \frac{1}{d} ({\rm div} \mathbf{v}) \mathbf{I}\right)+ \nu \left(\bar{\lambda} + \frac{2}{d}\right)\left({\rm div} \mathbf{v}\right)^2 \nn\\
    &~\quad\quad+ \displaystyle\sum_{\mA,\mB} \left(\nabla g_\mA\right)^T M_{\mA\mB} \nabla g_\mB~{\rm d}\Omega \geq 0.
 \label{eq:defDissipation}
\end{align}
\end{subequations}
Thus, the energy consists of the kinetic contribution $K$, the gravitational contribution $G$, and the Helmholtz free-energy density $\Psi$, while $\mathcal{D}$ collects the viscous and diffusive dissipation mechanisms. In the absence of gravity ($\mathbf{g}=0$), the momentum equation can alternatively be expressed in conservative form by using the Korteweg tensor identity
  \begin{align}
    {\rm div} \left(\left(\sum_\mA \mu_\mA\phi_\mA -\Psi\right)\mathbf{I} + \sum_\mA \nabla \phi_\mA \otimes \dfrac{\partial \Psi}{\partial \nabla \phi_\mA} \right) + \nabla \lambda =  \sum_\mA\trho_\mA\nabla g_\mA. \label{eq: korteweg}
  \end{align}
This conservative form is compatible with angular momentum conservation only when the corresponding Korteweg contribution is symmetric. Hence, the tensor $\sum_\mA \nabla \phi_\mA \otimes \partial \Psi/\partial \nabla \phi_\mA$ must be symmetric, which restricts the thermodynamically admissible choices of the free energy.

\subsection{Helmholtz free energy}\label{subsec: Helmholtz free energy}
The Cahn--Hilliard free energy per unit volume is written as:
\begin{subequations}\label{eq: Cahn Hilliard free energy}
\begin{align}
    \Psi =&~ \Psi_{0}\left(\left\{\phi_\mB\right\}\right) + \Psi_\nabla\left(\left\{\nabla \phi_\mB\right\}\right),\\
    \Psi_\nabla\left(\left\{\nabla \phi_\mB\right\}\right) =&  \frac{\varepsilon_0}{2}\sum_{\mA,\mB} \bar{\kappa}_{\mA\mB} \nabla \phi_\mA \cdot \nabla \phi_\mB,
\end{align}
\end{subequations}
where $\Psi_0$ is the bulk free energy that does not contain any gradient contributions, whereas the gradient term represents an excess free energy in the interfacial region. Here $\bar{\kappa}_{\mA\mB}=\bar{\kappa}_{\mB\mA}$ defines the symmetric (volume-based) capillarity matrix that includes both self- and cross-interaction terms among phases, and $\varepsilon_0$ represents the interface thickness measure. We note that the symmetry assumptions yield a symmetric Korteweg stress, cf. \eqref{eq: korteweg}.  The associated chemical potential takes the form:
\begin{align}
    \mu_\mA = \partial_{\phi_\mA} \Psi_0 - \varepsilon_0 \sum_{\mB} \bar\kappa_{\mA\mB}\Delta \phi_\mB,
\end{align}
where we used the independence of the state variables $\left\{\phi_\mA\right\}$.

With the aid of the saturation constraint \eqref{eq: saturation constraint} we reformulate the gradient contribution as \cite{surfacetension2026}:
    \begin{subequations}
      \begin{align}
     \frac{\varepsilon_0}{2}\sum_{\mA,\mB} \bar{\kappa}_{\mA\mB} \nabla \phi_\mA \cdot \nabla \phi_\mB =&~ -\frac{\varepsilon_0}{2}\sum_{\substack{\mA,\mB\\\mA < \mB}} \bar{\sigma}_{\mA\mB} \nabla \phi_\mA \cdot \nabla \phi_\mB,\\
     \bar{\sigma}_{\mA\mB} =&~ \bar{\kappa}_{\mA\mA}+\bar{\kappa}_{\mB\mB}-2\bar{\kappa}_{\mA\mB} \geq 0,
     \end{align}
     \end{subequations}
where we used that $\boldsymbol{\kappa}$ is positive semidefinite on the tangent space. The (symmetric) capillarity tensor $\bar{\sigma}_{\mA\mB}=\bar{\sigma}_{\mB\mA}$ captures only the interfacial contributions between different phases, excluding self-interactions: $\bar{\sigma}_{\mA\mA} = 0$. The choice $\bar{\kappa}_{\mA\mB}=\tfrac{1}{2}(\bar{\kappa}_{\mA\mA}+\bar{\kappa}_{\mB\mB})$ causes the gradient contribution in \eqref{eq: Cahn Hilliard free energy} to vanish. %, due to the saturation constraint \eqref{eq: saturation constraint}.
The chemical potential may now be written as: 
\begin{align}
    \mu_\mA =&~ \partial_{\phi_\mA} \Psi_0 + \varepsilon_0 \sum_{\substack{\mB\\\mB > \mA}} \bar{\sigma}_{\mA\mB}\Delta \phi_\mB.%\mu_\mA =&~ \partial_{\phi_\mA} \Psi_0 - \tfrac{1}{2}\sum_{\substack{\mB\\\mB < \mA}} \sigma_{\mB\mA}\Delta \phi_\mB- \tfrac{1}{2}\sum_{\substack{\mB\\\mB > \mA}} \sigma_{\mB\mA}\Delta \phi_\mC \nn= ~ \partial_{\phi_\mA} \Psi_0 - \tfrac{1}{2}\sum_{\substack{\mB\\\mB \neq \mA}} \sigma_{\mA\mB}\Delta \phi_\mB.
\end{align}
Note that on the continuous level all these formulations are equivalent by using the saturation condition. For the numerical approximation, we prefer the formulation \eqref{eq: Cahn Hilliard free energy}. Under the assumption that the matrix $\kappa_{\mA\mB}$ is symmetric positive semi-definite, this yields a convex gradient energy, which is more amendable during discretization. We emphasize that the reduced form $\sigma_{\mA\mB}$ without the saturation constraint, does not imply a convex energy functional, as this matrix is not symmetric positive semi-definite.

In the current article we adopt the free energy proposed in \cite{mixtureaware2026}:
\begin{subequations}\label{eq: Flory-Huggins}
\begin{align} 
\Psi_0\left(\left\{\phi_\mB\right\}\right) =&~ \frac{\bar{\Psi}_0}{\varepsilon_0},\\
\bar{\Psi}_0\left(\left\{\phi_\mB\right\}\right) =&~ \bar{W} \left(\sum_{\mA}  \phi_\mA \log \phi_\mA -\sum_{\mA\mC} \chi_{\mA\mC} \phi_\mA \phi_\mC\right),
\end{align}
\end{subequations}
where $\chi_{\mA\mC}=\chi_{\mC\mA}$ are symmetric interaction parameters. Rearrangement of the summation and use of the saturation constraint \eqref{eq: saturation constraint} yields
\begin{align}
\bar{\Psi}_0\left(\left\{\phi_\mB\right\}\right) = \bar{W} \left(\sum_{\mA}  \phi_\mA \log \phi_\mA - \sum_{\mA}\chi_{\mA\mA}\phi_\mA + \sum_{\mA<\mC}\chi_{\mA\mC}^{\mathrm{FH}}\phi_\mA\phi_\mC \right).
\end{align}
In the above form, we can drop the linear term since only gradients of $\partial_{\phi_{\mB}}\Psi_0$ contribute to the system, while the typical parameters of the Flory interactions are given by $\chi_{\mA\mC}^{\mathrm{FH}}:=\chi_{\mA\mA}+\chi_{\mC\mC}-2\chi_{\mA\mC}$. Note that only $\chi_{\mA\mC}^{\mathrm{FH}}$ are the free parameters, while the $\chi_{\mA\mC}$ can be shifted without changing the corresponding $\chi_{\mA\mC}^{\mathrm{FH}}$.

\subsection{Surface tension}
\label{sec:surface_tension}
We briefly discuss the definition of surface tension for the present free-energy class. The details are provided in \cite{surfacetension2026}.

Let $ \mathbf{p}^{(\mA\mB)}$ be a curve connecting two coexistence points $\mathbf{b}^{(\mA)}$ and $\mathbf{b}^{(\mB)}$,
\begin{align}\label{eq: boundary conditions phi}
  \mathbf{p}^{(\mA\mB)}: [0,1] \to \mathcal{G},\qquad
  \mathbf{p}^{(\mA\mB)}(0)=\mathbf{b}^{(\mA)},\quad
  \mathbf{p}^{(\mA\mB)}(1)=\mathbf{b}^{(\mB)},
\end{align}
and let $s:{\R}\to[0,1]$ be a strictly monotone parameterization of this path. The corresponding free energy per unit interfacial area is \cite{surfacetension2026}:
\begin{align}\label{eq: free energy interfacial area phi}
\gamma_{\mA\mB}[\mathbf{p}^{(\mA\mB)}] = 
  \int_0^1
      \sqrt{
      2\bar{\Omega}_{\mathbf{b}^{(\mA)}}(\mathbf p^{(\mA\mB)}(s))
      \sum_{\gamma,\delta}
      \bar{\kappa}_{\gamma\delta}
      p_\gamma^{(\mA\mB)\prime}(s)
      p_\delta^{(\mA\mB)\prime}(s)
      }
   {\rm d}s,
\end{align}
where $\bar{\Omega}_{\mathbf{b}^{(\mA)}}(\mathbf p^{(\mA\mB)}(s))$ is the grand potential of the bulk free energy $\bar{\Psi}_0$ relative to $\mathbf{b}^{(\mA)}$. 
The equilibrium profile minimizes \eqref{eq: free energy interfacial area phi} subject to the boundary conditions \eqref{eq: boundary conditions phi}, and the surface tension between phases $\mA$ and $\mB$ is defined as the minimal free energy per unit interfacial area
\begin{align}\label{eq: ST FH}
  \gamma_{\mA\mB}
  =
  \inf_{\mathbf{p}^{(\mA\mB)}}\gamma_{\mA\mB}[\mathbf{p}^{(\mA\mB)}].
\end{align}
The surface tension \eqref{eq: free energy interfacial area phi} thus depends on both the homogeneous and gradient parts of the free energy. The parameter $\varepsilon_0$ changes the diffuse-interface thickness but does not alter the values of the surface tensions $\gamma_{\mA\mB}$; see also \cite{surfacetension2026}. The paper \cite{surfacetension2026} provides an algorithm for choosing $\bar\kappa_{\gamma\delta}$ so that prescribed pairwise surface tensions are matched.

\subsection{Interface width}
\label{sec:interface_width}

Next, we briefly recall the notion of interface widths for the present free-energy class and refer to \cite{surfacetension2026} for the detailed derivation and discussion. 
%Let us denote the physical one-dimensional profile corresponding to $\mathbf{p}^{(\mA\mB)}$ as $\mathbf p^{(\mA\mB)}(z)=\mathbf{p}^{(\mA\mB)}(s(z))$, where the normal coordinate is $z=\mathbf x\cdot\boldsymbol{\nu}$. The interface width between phases $\mA$ and $\mB$ is then defined as the extent in the normal direction over which the profile passes between the corresponding levels,
The interface width between phases $\mA$ and $\mB$ is defined as the extent in the normal coordinate $z=\mathbf x\cdot\boldsymbol{\nu}$ over which the one-dimensional equilibrium profile passes between two prescribed transition
\begin{align}
\varepsilon_{\mA\mB}(a,b)
:=
|z_b-z_a|,
\label{eq:width_profile_short}
\end{align}
where $z_a$ and $z_b$ are the locations at which the profile reaches the prescribed levels $a$ and $b$. We provide precise numerical values of $a$ and $b$ in \cite{surfacetension2026}. The interface width may also be expressed using the following formula:
\begin{align}
\varepsilon_{\mA\mB}(a,b)
= \varepsilon_0
\int_a^b \sqrt{
\frac{
\mathbf p^{(\mA\mB)\prime}(s)^{\!\top} {\bar{\boldsymbol{\kappa}}}  \mathbf p^{(\mA\mB)\prime}(s)
}{
2 \bar{\Omega}_{\mathbf{b}^{(\mA)}}(\mathbf p^{(\mA\mB)}(s))
}
}
{\rm d}s.
\label{eq:width_energy_short}
\end{align}
The definition depends on the path of integration. 
In \cite{surfacetension2026} we provide a symmetry-consistent choice for the path of integration. Furthermore, we observe that the interface widths are determined by the competition between the capillarity metric $\bar{\boldsymbol{\kappa}}$ and the grand potential $\bar{\Omega}_{\mathbf{b}^{(\mA)}}$, and scale with $\varepsilon_0$. 
We note that there is in general no single diffuse-interface thickness in an $N$-phase model; different coexistence pairs may have different interface widths. We define the practical interface width
\begin{align}\label{eq:representative_width}
    \varepsilon
    :=
    \min_{\mA<\mB}\varepsilon_{\mA\mB}.
\end{align}

\subsection{Equilibrium conditions}
\label{sec:equilibrium_conditions}

The general equilibrium conditions for the $N$-phase NSCH mixture model are discussed in \cite{ten2025unified,mixtureaware2026,surfacetension2026}. In the absence of gravity and with no-slip boundary conditions, one has
\begin{align}
\vv=0,
\qquad
g_\alpha=\text{const},
\qquad
\bJ_\alpha=0
\qquad\text{for all }\alpha.
\end{align}
In \cite{mixtureaware2026,surfacetension2026} it is shown that the equilibrium profiles $\boldsymbol{\phi}^{\mathrm{eq}}$ are given by the constrained minimization problem
\begin{align}
\boldsymbol{\phi}^{\mathrm{eq}}
=
{\arg\inf}_{\boldsymbol{\phi}}
\Biggl\{
\int_\Omega \Psi(\boldsymbol{\phi},\nabla\boldsymbol{\phi}) {\rm d}x
\;:\;
\sum_{\alpha=1}^{N}\phi_\alpha = 1
\ \text{in }\Omega,
\quad
\int_\Omega \phi_\alpha {\rm d}x = m_\alpha
\ \text{for all }\alpha
\Biggr\},
\label{eq:equilibrium_min_problem}
\end{align}
where the constants $m_\alpha$ denote the total volume-fraction integrals.

\subsection{Mixture-aware closure}\label{sec:hierarchy_consistency}

A central feature of the thermodynamic closure derived in \cite{mixtureaware2026} is its \emph{mixture awareness}: the constitutive model remains consistent under changes in the effective phase set. This concerns three properties, of which the first one (reduction under merging of identical phases) is the decisive requirement, in the sense that it uniquely determines the closure class introduced in \cref{subsec: Helmholtz free energy}. The other two properties (reduction on simplex faces when a phase is absent, and invariance of absent phases) are then natural consequences of this mixture-aware structure. We recall these three properties below, since they provide important structural reference points for the discretization developed in this paper.

The first property concerns \emph{merging identical phases}. In applications, it is common that a phase is split into sublabels for modelling or numerical convenience, or that two labels happen to have identical constitutive parameters. In such a situation, the $N$-phase model should reduce exactly to an $(N\!-\!1)$-phase model after merging the corresponding volume fractions.

\begin{theorem}[Reduction by merging identical phases {\cite{mixtureaware2026}}]\label{thm:hier_merge}
Assume $N\ge 3$ and consider two phases, without loss of generality phases $1$ and $N$, which are identical in the sense that
\begin{subequations}
\begin{align}
\rho_1&=\rho_N,\\
\chi_{1\mA}&=\chi_{N\mA},\qquad \mA=1,\dots,N,\\
\kappa_{1\mA}&=\kappa_{N\mA},\qquad \mA=1,\dots,N,\\
m_{1\mA}&=m_{N\mA},\qquad \mA=2,\dots,N-1.
\end{align}
\end{subequations}
Define the merged variables by
\begin{subequations}
\begin{align}
\widehat\phi_1&:=\phi_1+\phi_N,\\
\widehat\phi_\mA&:=\phi_\mA,\qquad \mA=2,\dots,N-1.
\end{align}
\end{subequations}
Then the $N$-phase NSCH system reduces to the $(N-1)$-phase NSCH system for
\begin{align}
\widehat{\boldsymbol\phi}=(\widehat\phi_1,\widehat\phi_2,\dots,\widehat\phi_{N-1}),
\end{align}
with the reduced coefficients:
\begin{subequations}
\begin{align}
\hat\rho_1&:=\rho_1,\qquad \hat\rho_\mA:=\rho_\mA,\qquad \mA=2,\dots,N-1,\\
\hat\chi_{11}&:=\chi_{11}=\chi_{1N}=\chi_{NN},\\
\hat\chi_{1\mA}&:=\chi_{1\mA}=\chi_{N\mA},\qquad \mA=2,\dots,N-1,\\
\hat\chi_{\mB\mA}&:=\chi_{\mB\mA},\qquad \mB,\mA=2,\dots,N-1,\\
\hat\kappa_{11}&:=\kappa_{11}=\kappa_{1N}=\kappa_{NN},\\
\hat\kappa_{1\mA}&:=\kappa_{1\mA}=\kappa_{N\mA},\qquad \mA=2,\dots,N-1,\\
\hat\kappa_{\mB\mA}&:=\kappa_{\mB\mA},\qquad \mB,\mA=2,\dots,N-1,\\
\hat m_{1\mA}&:=m_{1\mA}=m_{N\mA},\qquad \mA=2,\dots,N-1,\\
\hat m_{\mB\mA}&:=m_{\mB\mA},\qquad \mB,\mA=2,\dots,N-1.
\end{align}
\end{subequations}
\end{theorem}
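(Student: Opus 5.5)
The approach is to show that every ingredient of the $N$-phase system \eqref{eq:sys2} — mixture density, free energy, chemical potentials, mobility fluxes and incompressibility constraint — is expressible through $\widehat{\boldsymbol\phi}$ alone, and that it then coincides with its $(N-1)$-phase counterpart built from the hatted coefficients.

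\textbf{Density, saturation and free energy.} First I check the algebraic quantities. Since $\rho_1=\rho_N$, we get $\rho=\sum_\mA\rho_\mA\phi_\mA=\hat\rho_1\widehat\phi_1+\sum_{\mA=2}^{N-1}\hat\rho_\mA\widehat\phi_\mA$. The saturation constraint \eqref{eq: saturation constraint} becomes $\sum_{\mA=1}^{N-1}\widehat\phi_\mA=1$.

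For the free energy:
\begin{itemize}
\item \emph{Interaction and gradient parts.} Since $\chi_{1\mA}=\chi_{N\mA}$ (including $\chi_{11}=\chi_{1N}=\chi_{NN}$), the quadratic form $\sum_{\mA\mC}\chi_{\mA\mC}\phi_\mA\phi_\mC$ collapses to $\sum\hat\chi\widehat\phi\widehat\phi$. The same argument collapses the gradient energy $\sum\bar\kappa_{\mA\mB}\nabla\phi_\mA\cdot\nabla\phi_\mB$ to $\sum\hat\kappa\nabla\widehat\phi\cdot\nabla\widehat\phi$.
\item \emph{Entropy part.} The mixing entropy does \emph{not} collapse: $\phi_1\log\phi_1+\phi_N\log\phi_N\neq\widehat\phi_1\log\widehat\phi_1$. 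I therefore work with the reduced fraction $c=\phi_1/\widehat\phi_1$. Writing $\phi_1\log\phi_1+\phi_N\log\phi_N=\widehat\phi_1\log\widehat\phi_1+\widehat\phi_1 s(c)$ with $s(c)=c\log c+(1-c)\log(1-c)$, I must show the extra term does not enter the merged dynamics.
\end{itemize}

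\textbf{Chemical potentials and reduced relative potentials.} Next I compute $\mu_1-\mu_N=\bar W\varepsilon_0^{-1}\log(\phi_1/\phi_N)$. The $\chi$ and $\kappa$ contributions cancel because rows $1$ and $N$ agree, so only the logarithmic entropy term survives. Because $\rho_1=\rho_N$, the same holds for $g_1-g_N=\rho_1^{-1}(\mu_1-\mu_N)$.

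The key reformulation uses $\phi_1=c\widehat\phi_1$ and $\phi_N=(1-c)\widehat\phi_1$:
\begin{itemize}
\item $\phi_1\mu_1+\phi_N\mu_N=\widehat\phi_1\hat\mu_1+\widehat\phi_1\bar W\varepsilon_0^{-1}s(c)$, where $\hat\mu_1$ is the reduced chemical potential from $\partial_{\widehat\phi_1}$ of the collapsed energy;
\item $\mu_1=\hat\mu_1+\bar W\varepsilon_0^{-1}(\log c-s(c))$, and similarly for $\mu_N$, while $\mu_\mA=\hat\mu_\mA$ for $\mA=2,\dots,N-1$.
\end{itemize}

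\textbf{Momentum and mass balances.} Substituting into \eqref{eq:sys2: mom}, the capillary term becomes $\sum_\mB\phi_\mB\nabla\mu_\mB=\sum_{\mB}\widehat\phi_\mB\nabla\hat\mu_\mB+\text{(terms in }\nabla c)$. The $\nabla c$ terms combine into $\widehat\phi_1\bar W\varepsilon_0^{-1}(c\nabla\log c+(1-c)\nabla\log(1-c))=0$, since $c\nabla\log c=\nabla c$ and $(1-c)\nabla\log(1-c)=-\nabla c$.

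For the mass balances, I add equations \eqref{eq:sys2: mass} for $\mA=1$ and $\mA=N$ and use $\rho_1=\rho_N$ to obtain $\partial_t\widehat\phi_1+\div(\widehat\phi_1\bv)+\hat\rho_1^{-1}\div(\bJ_1+\bJ_N)=0$. It remains to show that $\bJ_1+\bJ_N$ and $\bJ_\mA$ ($\mA\ge2$) equal the reduced fluxes $-\sum\hat M\nabla\hat g$. Using the mobility structure (columns sum to zero), $\bJ_\mA=\sum_{\mB\ne\mA}m_{\mA\mB}\phi_\mA\phi_\mB\nabla(g_\mB-g_\mA)$. Then:
\begin{itemize}
\item For $\mA\ge2$, the pair $\mB=1,N$ contributes $m_{\mA1}\phi_\mA(\phi_1\nabla g_1+\phi_N\nabla g_N-\widehat\phi_1\nabla g_\mA)$ since $m_{\mA1}=m_{\mA N}$. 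Here $\phi_1\nabla g_1+\phi_N\nabla g_N=\widehat\phi_1\nabla\hat g_1$, again by the $c\nabla\log c$ cancellation.
\item For $\bJ_1+\bJ_N$, the mutual terms $\pm m_{1N}\phi_1\phi_N\nabla(g_N-g_1)$ cancel, which is why $m_{1N}$ need not be specified. The remaining terms reduce by the same identity.
\end{itemize}
The constraint \eqref{eq:sys2: div} then follows by summation, and $\lambda$ is unchanged.

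\textbf{Expected obstacle.} The main difficulty is the non-collapsing entropy: the individual $\mu_1,\mu_N$ carry $c$-dependent logarithms, and the reduction depends on showing these cancel exactly in the $\phi$-weighted combinations. I will organize the whole proof around the identity $\phi_1\nabla\mu_1+\phi_N\nabla\mu_N=\widehat\phi_1\nabla\hat\mu_1$, proved first as a lemma. Everything else then follows by linear bookkeeping.
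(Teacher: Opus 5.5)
Your argument is correct in substance. It is the direct pointwise-substitution reduction that the paper relies on: the paper gives no proof of this theorem, deferring it to \cite{mixtureaware2026}, and later only uses that the reduction consists of pointwise identities. You correctly reduce $\rho$, $\sum_\beta\phi_\beta\nabla\mu_\beta$, $\bJ_1+\bJ_N$, $\bJ_\alpha$ for $\alpha\geq 2$, and the divergence constraint, with $\lambda$ unchanged. Your observation that the mutual $1$--$N$ flux terms cancel, so that $m_{1N}$ is never needed, also matches the hypotheses.

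You do need to fix your second ``key reformulation'' bullet. Take $\hat\mu_1$ to be the chemical potential of the $(N-1)$-phase energy, which is what the reduced system requires. The $\chi$- and $\kappa$-parts of $\mu_1$ and $\mu_N$ coincide with those of $\hat\mu_1$, so
\[
\mu_1=\hat\mu_1+\bar W\varepsilon_0^{-1}\log c,\qquad \mu_N=\hat\mu_1+\bar W\varepsilon_0^{-1}\log(1-c),
\]
with no $-s(c)$.

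Your version has three problems:
\begin{itemize}
\item It contradicts your first bullet, since it would give $\phi_1\mu_1+\phi_N\mu_N=\widehat\phi_1\hat\mu_1$.
\item If it held, the momentum term would keep a residual $-\bar W\varepsilon_0^{-1}\widehat\phi_1 s'(c)\nabla c\neq 0$, and the reduction would fail.
\item Your momentum paragraph in fact silently uses the correct formula, so the error is local.
\end{itemize}

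The variables $c$ and $s(c)$ are unnecessary. Since $\phi\nabla\log\phi=\nabla\phi$, one has directly $\phi_1\nabla\log\phi_1+\phi_N\nabla\log\phi_N=\nabla\widehat\phi_1=\widehat\phi_1\nabla\log\widehat\phi_1$. This gives your lemma $\phi_1\nabla\mu_1+\phi_N\nabla\mu_N=\widehat\phi_1\nabla\hat\mu_1$. Combined with $\rho_1=\rho_N$, it also gives $\phi_1\nabla g_1+\phi_N\nabla g_N=\widehat\phi_1\nabla\hat g_1$, without dividing by $\widehat\phi_1$.

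Minor points:
\begin{itemize}
\item The form $\bJ_\alpha=\sum_{\beta\neq\alpha}m_{\alpha\beta}\phi_\alpha\phi_\beta\nabla(g_\beta-g_\alpha)$ follows from the definition of $M_{\alpha\alpha}$ (zero row sums), not from column sums.
\item Both the $1$--$N$ cancellation and $m_{\alpha 1}=m_{\alpha N}$ use the symmetry $m_{\alpha\beta}=m_{\beta\alpha}$.
\item You should state that $\boldsymbol\tau$ and $\rho\mathbf{g}$ are unchanged. This presumes the viscosity does not distinguish phases $1$ and $N$.
\end{itemize}
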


The second property concerns \emph{reduction on simplex faces}. If one phase is identically zero, then the governing equations should coincide with those of the reduced system obtained by deleting that phase. In other words, the closure remains consistent when the dynamics is restricted to a face of the Gibbs simplex.

\begin{theorem}[Reduction on simplex faces {\cite{mixtureaware2026}}]\label{thm:hier_face}
If a solution of the $N$-phase NSCH system satisfies $\phi_\mA\equiv 0$ for some $\mA$, then the restriction of the $N$-phase system to the Gibbs-simplex face
\begin{align}
\mathcal{G}_\mA:=\{\boldsymbol{\phi}\in\mathcal{G}\;|\;\phi_\mA=0\}
\end{align}
coincides with the $(N-1)$-phase NSCH system obtained by deleting phase $\mA$ and canonically relabeling the remaining indices.
\end{theorem}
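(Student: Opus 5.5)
The plan is to fix $\mA$ with $\phi_\mA\equiv 0$ and check, equation by equation in \eqref{eq:sys2}, that the terms involving phase $\mA$ either vanish identically or reduce to their $(N-1)$-phase counterparts. The whole argument rests on the mixture-aware mobility and on the bulk free energy \eqref{eq: Flory-Huggins}.

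\textbf{Step 1: the mobility matrix.} From its definition, $M_{\mA\mB}=-m_{\mA\mB}\phi_\mA\phi_\mB=0$ for $\mB\neq\mA$, and $M_{\mA\mA}=\phi_\mA\sum_{\mB\neq\mA}m_{\mA\mB}\phi_\mB=0$. Hence the $\mA$-th row and column of $M$ vanish, so $\bJ_\mA=0$ regardless of $g_\mA$. For $\mC\neq\mA$, the diagonal entry is $M_{\mC\mC}=\phi_\mC\sum_{\mB\neq\mC,\mA}m_{\mC\mB}\phi_\mB$, because the term with $\mB=\mA$ carries the factor $\phi_\mA=0$. The off-diagonal entries are unchanged. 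Thus the remaining block is exactly the $(N-1)$-phase mixture-aware mobility. Consequently the diffusive fluxes $\bJ_\mC=-\sum_{\mB\neq\mA}M_{\mC\mB}\nabla g_\mB$ depend only on the remaining $g_\mB$, and so does the dissipation.

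\textbf{Step 2: mass balances and the divergence constraint.} The $\mA$-th equation of \eqref{eq:sys2: mass} reads $0+0+0=0$, so it is satisfied trivially. It is also consistent, in the sense that $\phi_\mA\equiv0$ is propagated. In \eqref{eq:sys2: div}, the term $\rho_\mA^{-1}\div\bJ_\mA$ drops out. The saturation constraint becomes $\sum_{\mC\neq\mA}\phi_\mC=1$, which places the state on $\mathcal G_\mA$. The densities satisfy $\rho=\sum_{\mC\neq\mA}\rho_\mC\phi_\mC$, so the kinetic, gravitational and inertial terms coincide with those of the reduced system.

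\textbf{Step 3: momentum and chemical potentials.} In \eqref{eq:sys2: mom}, the capillary term $\phi_\mA\nabla\mu_\mA$ vanishes. The main obstacle is here: the entropy part $\phi_\mA\log\phi_\mA$ makes $\mu_\mA$ singular as $\phi_\mA\to0$. I would handle this by working with the products $\phi_\mA\nabla\mu_\mA$ and $\phi_\mA\partial_{\phi_\mA}\Psi_0$, using $\phi\log\phi\to0$, rather than with $\mu_\mA$ itself. Equivalently, I would note that $\mu_\mA$ and $g_\mA$ enter the remaining equations only through $\phi_\mA\nabla\mu_\mA$ and $M_{\cdot\mA}\nabla g_\mA$, both of which vanish. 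The $\mA$-th equation of \eqref{eq:sys2: chem} then decouples and merely defines the now irrelevant $\mu_\mA$.

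For $\mC\neq\mA$, I compute $\partial_{\phi_\mC}\Psi_0=\tfrac{\bar W}{\varepsilon_0}\bigl(\log\phi_\mC+1-2\sum_{\mB}\chi_{\mC\mB}\phi_\mB\bigr)$ and $-\varepsilon_0\sum_\mB\bar\kappa_{\mC\mB}\Delta\phi_\mB$, and set $\phi_\mA=0$. The terms with $\mB=\mA$ drop out, which leaves precisely the chemical potentials of the $(N-1)$-phase energy with the coefficients $\chi,\bar\kappa$ restricted. Equivalently, $\Psi$ evaluated at $\phi_\mA=0$ is the reduced free energy, since $0\log0=0$ and the cross terms vanish. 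Since partial derivatives in the other variables commute with the restriction, the variational derivatives match.

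\textbf{Step 4: conclusion.} Relabeling the indices $\mC\neq\mA$ canonically shows that the surviving $2(N-1)+2$ equations are exactly the reduced system \eqref{eq:sys2} for $N-1$ phases.
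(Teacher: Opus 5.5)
Your equation-by-equation substitution of $\phi_\alpha\equiv 0$ into \eqref{eq:sys2} is correct and is essentially the argument the paper relies on: the paper gives no proof of this theorem itself but defers to \cite{mixtureaware2026}, and its proof of \cref{lem:mixture_aware_variational} describes those reductions as pointwise strong-form identities of exactly this kind. Two small points to tighten: the vanishing of $\phi_\alpha\nabla\mu_\alpha$ (and hence of $M_{\gamma\alpha}\nabla g_\alpha$) is most cleanly seen from $\phi_\alpha\nabla\log\phi_\alpha=\nabla\phi_\alpha\equiv 0$, whereas $0\log 0=0$ is the fact you need for $\Psi|_{\phi_\alpha=0}$ to be the reduced energy; and if the viscosity $\nu$ depends on $\boldsymbol{\phi}$ through the partial viscosities, you should note that it too restricts to its $(N-1)$-phase counterpart.
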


The third property is the dynamical consequence of the second one: once a phase is absent, it remains absent throughout the evolution.

\begin{corollary}[Absence invariance {\cite{mixtureaware2026}}]\label[corollary]{cor:hier_absence}
If a phase is absent initially, i.e.\ $\phi_\mA(\cdot,0)\equiv 0$, then it remains absent for all times,
\begin{align}
\phi_\mA(\cdot,t)\equiv 0 \qquad \text{for all }t\ge 0.
\end{align}
\end{corollary}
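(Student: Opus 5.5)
The plan is to show that the zero function is an admissible evolution for the $\mA$-th volume fraction: once $\phi_\mA\equiv 0$, the $\mA$-th mass balance \eqref{eq:sys2: mass} holds trivially. Uniqueness of the solution of the Cauchy problem then gives the claim. Concretely, let $\widehat{\boldsymbol\phi}$, $\widehat{\bv}$, $\widehat\lambda$ solve the $(N-1)$-phase NSCH system obtained by deleting phase $\mA$, with initial data obtained by dropping the $\mA$-th component of $(\bv_0,\boldsymbol\phi_0)$. These data are admissible because $\sum_{\mB\neq\mA}\phi_{\mB,0}=1$ on the face $\mathcal{G}_\mA$. We then extend this solution by $\phi_\mA:\equiv 0$, and the main task is to verify that the extended tuple solves the full $N$-phase system \eqref{eq:sys2}. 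By \cref{thm:hier_face}, the equations for the indices $\mB\neq\mA$, the momentum balance, and the constraint \eqref{eq:sys2: div} coincide with those of the reduced system on the face. Only the $\mA$-th mass balance therefore needs a separate check.

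For that equation, the transport term is immediate: ${\rm div}(\phi_\mA\bv)=0$ and $\partial_t\phi_\mA=0$. For the diffusive term, every entry of the $\mA$-th row of the mixture-aware mobility carries a factor $\phi_\mA$, namely $M_{\mA\mB}=-m_{\mA\mB}\phi_\mA\phi_\mB$ and $M_{\mA\mA}=\phi_\mA\sum_{\mB\neq\mA}m_{\mA\mB}\phi_\mB$. Hence $\bJ_\mA=-\sum_\mB M_{\mA\mB}\nabla g_\mB=0$, provided each $\nabla g_\mB$ stays finite. The chemical potentials appearing in this product are $\mu_\mB$ for $\mB\neq\mA$ and $\mu_\mA$ itself. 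The potential $\mu_\mA=\varepsilon_0^{-1}\bar W(\log\phi_\mA+1-2\sum_\mC\chi_{\mA\mC}\phi_\mC)-\varepsilon_0\sum_\mB\bar\kappa_{\mA\mB}\Delta\phi_\mB$ is singular at $\phi_\mA=0$.

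This singularity is the main obstacle. The logarithmic term blows up, so the product $\phi_\mA\nabla g_\mA$ must be interpreted carefully. I would handle it by writing the Flory--Huggins part of the flux in its combined form, $\phi_\mA\nabla\log\phi_\mA=\nabla\phi_\mA$, so that the flux reads $\bJ_\mA=-\sum_\mB m_{\mA\mB}(\phi_\mB\nabla\phi_\mA-\phi_\mA\nabla\phi_\mB)/(\varepsilon_0\rho_\mA)+\dots$. Every term of this flux vanishes when $\phi_\mA\equiv0$. The remaining smooth terms (interaction, gradient energy, pressure) enter multiplied by $\phi_\mA$, so they vanish as well. This is exactly the mechanism behind \cref{thm:hier_face}, so I would invoke that theorem's interpretation of the system on the face rather than redo the computation.

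Finally, I would conclude via uniqueness: any solution with $\phi_\mA(\cdot,0)\equiv0$ coincides with the extended solution constructed above, giving $\phi_\mA(\cdot,t)\equiv0$ for all $t\ge0$. Uniqueness is implicit in the corollary's formulation, so I would state it as an assumption on the solution concept. As a weaker fallback that avoids uniqueness, I would test the $\mA$-th equation with $1$ and use $\phi_\mA\ge0$ together with conservation of $\int_\Omega\phi_\mA$. This yields $\int_\Omega\phi_\mA=0$, and hence $\phi_\mA\equiv0$, for every solution that stays in the Gibbs simplex. This route is cleaner, and I would present it as the main argument if positivity of $\phi_\mA$ can be assumed.
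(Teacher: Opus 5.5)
Your first route is essentially the paper's. The paper gives no separate proof and presents the corollary as the dynamical consequence of \cref{thm:hier_face}. Your check of the $\mA$-th balance is exactly the mechanism behind that theorem: the transport term and the whole $\mA$-th row of the mixture-aware mobility carry a factor $\phi_\mA$, and $\phi_\mA\nabla\log\phi_\mA=\nabla\phi_\mA$ absorbs the logarithmic singularity. As you note, passing from ``$\phi_\mA\equiv0$ is consistent'' to ``every solution keeps $\phi_\mA\equiv0$'' needs uniqueness. That is not implied by the statement and is not available in general for degenerate-mobility systems with logarithmic potentials, so this route stays conditional.

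Your fallback is a genuinely different argument, and it is the one that actually closes the proof. Testing the $\mA$-th balance with $1$ is legitimate under (A) or (C)--(D), which give $\bJ_\mA\cdot\mathbf n=\vv\cdot\mathbf n=0$. It yields $\int_\Omega\phi_\mA(\cdot,t)=\int_\Omega\phi_\mA(\cdot,0)=0$, and $\phi_\mA\ge0$ then forces $\phi_\mA(\cdot,t)\equiv0$. The sign condition is not an extra hypothesis: the Flory--Huggins term $\phi_\mA\log\phi_\mA$, and hence $\mu_\mA$, is only defined for $\phi_\mA\ge0$, and the paper takes solutions with values in $\mathcal{G}$. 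You can therefore drop the ``if'' and present this as the main proof.

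What the fallback buys is independence from uniqueness, from \cref{thm:hier_face}, and from any interpretation of singular products. What it costs:
\begin{itemize}
\item it relies on no-flux or periodic boundary conditions for conservation of $\int_\Omega\phi_\mA$;
\item it uses the sign of $\phi_\mA$ rather than the degenerate mobility structure, so a polynomial surrogate of $\Psi_0$ would need a separate positivity result;
\item unlike the face reduction, it says nothing about the dynamics of the remaining $N-1$ phases.
\end{itemize}

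A harmless slip: since $g_\mB=\rho_\mB^{-1}(\mu_\mB+\lambda)$, the logarithmic part of the flux is $-\tfrac{\bar W}{\varepsilon_0}\sum_{\mB\neq\mA}m_{\mA\mB}\bigl(\rho_\mA^{-1}\phi_\mB\nabla\phi_\mA-\rho_\mB^{-1}\phi_\mA\nabla\phi_\mB\bigr)$. It does not carry a common factor $\rho_\mA^{-1}$, but it still vanishes for $\phi_\mA\equiv0$.
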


%These three properties are PDE-level reduction statements satisfied by the continuum mixture-aware closure, and they provide natural structural targets for discretization. In the remainder of this paper, we construct a structure-preserving method that retains the conservation and dissipation structure of the mixture-aware NSCH system while remaining compatible with these reduction mechanisms.

\section{Equivalent alternative formulation}\label{sec:reform}

In this section we provide an alternative but equivalent formulation of the system \eqref{eq:sys2}. This forms the basis for the structure-preserving discretization provided in \cref{sec:scheme}. In \cref{subsec:strong_form} we provide the strong formulation, and subsequently in \cref{subsec:var_form} the corresponding variational formulation.

\subsection{Strong formulation}\label{subsec:strong_form}
The energy evolution \eqref{eq: global energy evolution} of the system \eqref{eq:sys2} results from multiplying the balance laws with particular weights.
\begin{lemma}[Energy evolution]\label[lemma]{lem: alternative energy evolution}
The energy evolution of system~\eqref{eq:sys2} is obtained by taking a linear combination of equations~\eqref{eq:sys2: mom}--\eqref{eq:sys2: chem}, weighted respectively by $\vv$, $-\frac{1}{2}|\vv|^2 \rho_\mA + \mu_\mA + g y \rho_\mA$, $\lambda$, and $-\partial_t \phi_\mA$.
\end{lemma}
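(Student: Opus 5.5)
I will prove the statement by a direct computation. Multiply \eqref{eq:sys2: mom} by $\vv$, \eqref{eq:sys2: mass} by $w_\mA:=-\tfrac12|\vv|^2\rho_\mA+\mu_\mA+gy\rho_\mA$ and sum over $\mA$, multiply \eqref{eq:sys2: div} by $\lambda$, and multiply \eqref{eq:sys2: chem} by $-\partial_t\phi_\mA$ and sum over $\mA$. Adding the four results and integrating over $\Omega$ (with no-slip or periodic boundary conditions and vanishing normal fluxes) should give exactly \eqref{eq: global energy evolution}. I will verify this by grouping the terms into kinetic, gravitational, free-energy, pressure and dissipative contributions, and checking that each group closes.

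\emph{Kinetic and gravitational terms.} Define the mixture mass flux $\mathbf{m}:=\rho\vv-\sum_\mA\bJ_\mA$. Multiplying \eqref{eq:sys2: mass} by $\rho_\mA$ and summing gives the continuity equation $\partial_t\rho+{\rm div}(\rho\vv)+\sum_\mA{\rm div}\bJ_\mA=0$. The standard identity
\begin{align*}
\vv\cdot\bigl(\partial_t(\rho\vv)+{\rm div}(\rho\vv\otimes\vv)\bigr)=\partial_t K+{\rm div}(K\vv)+\tfrac12|\vv|^2\bigl(\partial_t\rho+{\rm div}(\rho\vv)\bigr)
\end{align*}
then shows why the weight $-\tfrac12|\vv|^2\rho_\mA$ is chosen. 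Summed against \eqref{eq:sys2: mass}, it cancels the last bracket up to $-\tfrac12|\vv|^2\sum_\mA{\rm div}\bJ_\mA$. This remainder is the delicate term. Integrating by parts, it becomes $\sum_\mA\bJ_\mA\cdot\nabla\tfrac12|\vv|^2$ plus a boundary term. I expect it to be absorbed together with the $g_\mA$-terms below. If it does not close, I would check whether \eqref{eq:sys2: mom} must be read with the diffusive flux contribution, i.e.\ with $\rho\vv\otimes\vv$ interpreted via $\mathbf{m}$ as in \cite{ten2025unified}.

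In the same way, $gy\rho_\mA$ paired with \eqref{eq:sys2: mass} gives $\partial_t G+{\rm div}(G\vv)-\rho\vv\cdot\nabla(gy)$ plus $gy\,{\rm div}\bJ_\mA$-terms. The term $-\rho\vv\cdot\nabla(gy)$ cancels the body-force term $-\rho\mathbf b\cdot\vv$, using $\mathbf b=\mathbf g=-g\nabla y$.

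\emph{Capillary, pressure and diffusive terms.} The weight $\mu_\mA$ against $\partial_t\phi_\mA+{\rm div}(\phi_\mA\vv)$ gives $\mu_\mA\partial_t\phi_\mA-\phi_\mA\vv\cdot\nabla\mu_\mA$ after integration by parts. The second piece cancels $\vv\cdot\sum_\mB\phi_\mB\nabla\mu_\mB$ from the momentum equation. The first combines with $-\partial_t\phi_\mA\,(\mu_\mA-\partial_{\phi_\mA}\Psi+{\rm div}\,\partial_{\nabla\phi_\mA}\Psi)$ from \eqref{eq:sys2: chem}, which yields $\partial_t\Psi$ after one further integration by parts in space.

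For the pressure, $\vv\cdot\nabla\lambda$ integrates to $-\lambda\,{\rm div}\vv$. Using the $\lambda$-weighted \eqref{eq:sys2: div}, this becomes $\lambda\sum_\mA\rho_\mA^{-1}{\rm div}\bJ_\mA$. Collecting all ${\rm div}\bJ_\mA$-terms gives
\begin{align*}
\sum_\mA\rho_\mA^{-1}\bigl(\mu_\mA+\lambda+gy\rho_\mA-\tfrac12|\vv|^2\rho_\mA\bigr){\rm div}\bJ_\mA=\sum_\mA\bigl(g_\mA+gy-\tfrac12|\vv|^2\bigr){\rm div}\bJ_\mA .
\end{align*}
Integrating by parts and using $\bJ_\mA=-\sum_\mB M_{\mA\mB}\nabla g_\mB$ turns the $g_\mA$ part into the mobility dissipation $\sum_{\mA,\mB}\nabla g_\mA^T M_{\mA\mB}\nabla g_\mB$.

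\emph{Main obstacle.} The leftover contributions $(gy-\tfrac12|\vv|^2)\sum_\mA{\rm div}\bJ_\mA$ must vanish. I expect this to follow from the mobility structure: each column of $M$ sums to zero, since $\sum_\mA M_{\mA\mB}=\phi_\mB\sum_{\mA\ne\mB}m_{\mB\mA}\phi_\mA-\sum_{\mA\ne\mB}m_{\mA\mB}\phi_\mA\phi_\mB=0$ by symmetry of $m$. Hence $\sum_\mA\bJ_\mA=0$, and these terms drop out. This mass-averaged structure is the key check.

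Finally, the viscous term $-\vv\cdot{\rm div}\boldsymbol\tau$ integrates to $\boldsymbol\tau:\nabla\vv$. Splitting $\nabla^s\vv$ into its deviatoric and trace parts gives the viscous part of $\mathcal D$. Collecting all groups yields $\ddt\mathcal E=-\mathcal D$.
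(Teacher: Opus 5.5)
Your proposal is correct and follows essentially the paper's argument: the same four weights, the same kinetic identity, the cancellation of the $\phi_\mA\nabla\mu_\mA$, $\nabla\lambda$ and gravity terms, and the same key fact $\sum_\mA\bJ_\mA=0$. You usefully justify that fact from the zero column sums of the mobility (symmetric $m_{\mA\mB}$), so your fallback of reinterpreting the momentum flux via $\mathbf m$ is unnecessary. The only difference is presentational: the paper keeps the computation pointwise, obtaining a local balance in divergence form with a Korteweg tensor and an explicit boundary flux $\mathcal B$ before integrating, whereas you integrate by parts immediately and assume the boundary contributions vanish.
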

\begin{proof}
  See Appendix \ref{appendix: energy}.
\end{proof}

\cref{lem: alternative energy evolution} demonstrates that the standard variational formulation associated with \eqref{eq:sys2} is energy-stable, provided the required test functions belong to the appropriate function spaces. However, the weights corresponding to the mass balance equations -- specifically, $- \frac{1}{2} \rho_\mA |\vv|^2 + \mu_\mA + g y \rho_\mA$ -- do not lie in standard function spaces due to the presence of the nonlinear kinetic term $- \frac{1}{2}\rho_\mA |\vv|^2$. To address this issue, we introduce an equivalent formulation in which the energy evolution is derived using test functions drawn from conventional Sobolev spaces, thereby ensuring compatibility with standard finite element discretizations.

The nonlinear kinetic weight term $- \frac{1}{2}\rho_\mA |\vv|^2$ accounts for the contribution of the mass balance law to the energy evolution. Hence, the nonlinear contribution of the mass balance weight vanishes when substituting the mass balance equations~\eqref{eq:sys2: mass} into the momentum balance equation~\eqref{eq:sys2: mom}. This provides an alternative form of the momentum equation:
\begin{align}\label{eq:ID: mom}
    0=&~\dt(\rho\vv) + \div(\rho\vv\otimes\vv) - \div \mathbf{S} + \nabla \lambda + \displaystyle\sum_\mB\phi_\mB\nabla\mu_\mB  - \rho\mathbf{g}\nn\\
    &~- \frac{1}{2}\vv \displaystyle\sum_\mA \rho_\mA \left(\dt\phi_\mA + \div(\phi_\mA\vv) + \rho_\mA^{-1}\div \bJ_\mA\right)\nn\\
    =&~ \frac{\vv}{2}\dt\rho + \rho\dt\vv + \frac{1}{2} \vv \div(\rho \vv) + \rho \vv\cdot \nabla \vv - \div \mathbf{S} + \nabla \lambda + \displaystyle\sum_\mB\phi_\mB\nabla\mu_\mB - \rho\mathbf{g},
\end{align}
where we have used the identities:
\begin{subequations}
    \begin{align}
        \dt(\rho\vv) - \frac{1}{2}\vv\sum_\mA \dt\trho_\mA =&~\frac{\vv}{2}\dt\rho + \rho\dt\vv, \\
        \div(\rho\vv\otimes\vv) - \frac{1}{2}\vv \sum_\mA\div(\trho_\mA\vv) =&~\frac{1}{2} \vv \div(\rho \vv) + \rho \vv\cdot \nabla \vv,\\
        \sum_\mA\div \bJ_\mA =&~ 0.
    \end{align}
\end{subequations}
Thus, the system comprised of the balance laws \eqref{eq:sys2: mass}-\eqref{eq:sys2: chem} and the latter expression of \eqref{eq:ID: mom} provides an energy-stable formulation where the weights -- specifically, $\vv$, $\mu_\mA + g y \rho_\mA$, $\lambda$, and $-\partial_t \phi_\mA$ -- are members of standard Sobolev spaces.

In what follows, we introduce another modification that is essential for the development of the fully-discrete energy-stable system. By substituting the identity $g_\mA = \rho_\mA^{-1}(\mu_\mA + \lambda)$ along with relation \eqref{eq:ID: mom}, we reformulate system \eqref{eq:sys1} into an equivalent strong form:
\begin{subequations}\label{eq:sys3}
  \begin{align}
 \rho\dt\vv + \frac{1}{2} \vv \div(\rho \vv) + \rho \vv\cdot \nabla \vv + \sum_\mB \trho_\mB \nabla g_\mB  + (1-\sum_\mB \phi_\mB)\nabla \lambda 
    %&\nn\\
    - {\rm div} \boldsymbol{\tau}-\rho\mathbf{g} &=~ 0, \label{eq:sys3: mom}\\
  \partial_t \phi_\mA  + {\rm div}(\phi_\mA  \bv) +\rho_\mA^{-1}{\rm div} (\bJ_\mA )&=~0,\label{eq:sys3: mass}\\
  {\rm div}\bv + \displaystyle\sum_{\mA}  \rho_\mA^{-1} \nabla \cdot \bJ_\mA &=~ 0, \label{eq:sys3: div}\\
  \rho_\mA g_\mA - \dfrac{ \partial \Psi}{\partial \phi_\mA} + {\rm div}\dfrac{\partial \Psi}{\partial\nabla \phi_\mA} - \lambda&=~0.\label{eq:sys3: chem}
  \end{align}
\end{subequations}
System \eqref{eq:sys3} contains $2N+2$ equations for $2N+2$ (independent) state variables ($\bv$, $\left\{\phi_\mA\right\}_{\mA=1,...,N}$, $\lambda$, $\left\{g_\mA\right\}_{\mA=1,...,N}$).

\begin{lemma}[Energy evolution alternative formulation]
The energy evolution of the system \eqref{eq:sys3} follows from a linear combination of \eqref{eq:sys3: mom}-\eqref{eq:sys3: chem} with the weights: $\vv$, $g_\mA\rho_\mA + g y \rho_\mA-\lambda$, $\lambda$, and $-\partial_t \phi_\mA$.
\end{lemma}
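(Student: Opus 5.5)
Testing \eqref{eq:sys3: mom} with $\vv$, summing \eqref{eq:sys3: mass} tested with $\rho_\mA g_\mA + g y\rho_\mA - \lambda$ over $\mA$, testing \eqref{eq:sys3: div} with $\lambda$, and summing \eqref{eq:sys3: chem} tested with $-\dt\phi_\mA$ over $\mA$ should produce $\ddt\mathcal{E} + \mathcal{D} = 0$ after integration over $\Omega$. We assume boundary conditions under which all boundary terms vanish, e.g.\ $\vv=0$ and $\bJ_\mA\cdot\mathbf{n}=0$ on $\partial\Omega$ and natural conditions for $\phi_\mA$. We treat the four contributions in turn and then collect terms.

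\emph{Momentum.} Since $\tfrac12|\vv|^2\dt\rho + \rho\vv\cdot\dt\vv = \dt(\tfrac12\rho|\vv|^2)$, the first term plus half of the second gives the rate of kinetic energy. The remaining half, $\tfrac12|\vv|^2\div(\rho\vv)$, combines with $\rho\vv\cdot(\vv\cdot\nabla\vv) = \rho\vv\cdot\nabla(\tfrac12|\vv|^2)$ into $\div(\tfrac12\rho|\vv|^2\vv)$, which integrates to zero. This is why the skew-symmetrised form \eqref{eq:ID: mom} was introduced. The viscous term yields the viscous part of $\mathcal{D}$ after the usual split of $\nabla^s\vv$ into deviatoric and trace parts. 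Gravity gives $-\rho\vv\cdot\mathbf{g} = \rho g \vv\cdot\nabla y$. The capillary terms give $\sum_\mB \trho_\mB \vv\cdot\nabla g_\mB + (1-\sum_\mB\phi_\mB)\vv\cdot\nabla\lambda$.

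\emph{Mass balances.} Multiplying \eqref{eq:sys3: mass} by $\rho_\mA g_\mA + gy\rho_\mA - \lambda$ and integrating by parts produces three groups of terms.
\begin{itemize}
\item Time derivatives: $\sum_\mA(\rho_\mA g_\mA - \lambda)\dt\phi_\mA$ and $gy\,\dt\rho = \dt G$.
\item Convective terms: $-\sum_\mA \phi_\mA\vv\cdot\nabla(\rho_\mA g_\mA + gy\rho_\mA - \lambda)$, which cancel the capillary and gravity terms from the momentum test up to $\vv\cdot\nabla\lambda$ and $-\sum\phi_\mA\vv\cdot\nabla\lambda$ pieces.
\item Flux terms: $-\sum_\mA\bJ_\mA\cdot\nabla g_\mA - \sum_\mA \rho_\mA^{-1}\bJ_\mA\cdot\nabla(gy\rho_\mA - \lambda)$. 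The first equals the diffusive dissipation $\sum_{\mA\mB}\nabla g_\mA^T M_{\mA\mB}\nabla g_\mB$. In the second, the $gy$ part is $-g\,\mathbf{j}\cdot\sum_\mA\bJ_\mA$, which vanishes because $\sum_\mA \bJ_\mA = 0$ (the columns of the mobility sum to zero). This identity must be checked from the given $M_{\mA\mB}$.
\end{itemize}

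\emph{Divergence and chemical-potential equations.} Testing \eqref{eq:sys3: div} with $\lambda$ gives $-\vv\cdot\nabla\lambda - \sum_\mA\rho_\mA^{-1}\bJ_\mA\cdot\nabla\lambda$ after integration by parts. This cancels the leftover $\lambda$-flux term and the $\vv\cdot\nabla\lambda$ term, because the two $\sum_\mB\phi_\mB\vv\cdot\nabla\lambda$ contributions with opposite signs cancel. Testing \eqref{eq:sys3: chem} with $-\dt\phi_\mA$ gives $-\sum_\mA(\rho_\mA g_\mA - \lambda)\dt\phi_\mA + \ddt\int\Psi$, using the chain rule $\int \partial_{\phi}\Psi\,\dt\phi + \partial_{\nabla\phi}\Psi\cdot\nabla\dt\phi = \ddt\int\Psi$. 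This cancels the time-derivative term from the mass test.

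The main obstacle is bookkeeping rather than any deep step. We must track that every $\lambda$ term cancels without invoking the saturation constraint, since $\lambda$ appears in the momentum with the weight $(1-\sum\phi_\mB)$ precisely to make this work. We must also verify $\sum_\mA\bJ_\mA=0$ for the mixture-aware mobility, which follows from $\sum_\mA M_{\mA\mB} = \phi_\mB\sum_{\mA\neq\mB} m_{\mB\mA}\phi_\mA - \sum_{\mA\ne\mB}m_{\mA\mB}\phi_\mA\phi_\mB = 0$ by symmetry of $m_{\mA\mB}$. Collecting all surviving terms yields $\ddt\mathcal{E} = -\mathcal{D}$.
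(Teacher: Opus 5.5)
Your proposal is correct and is essentially the paper's proof. It uses the same four weights and the same kinetic-energy identity; as in the paper, that identity needs the $\tfrac{\vv}{2}\dt\rho$ term of \eqref{eq:ID: mom}, which is missing from the printed \eqref{eq:sys3: mom}. The capillary and gravity terms cancel against the convective parts of the mass balances in the same way, and the $\lambda$-bookkeeping through the factor $(1-\sum_\mB\phi_\mB)$ likewise avoids the saturation constraint.

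The difference is only in presentation. You integrate by parts immediately under boundary conditions that kill all boundary terms, and you explicitly check $\sum_\mA\bJ_\mA=0$, which the paper uses without comment. The paper instead first derives a pointwise balance law with the modified Korteweg flux and then collects the boundary contribution $\mathcal{B}$.
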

\begin{proof}
  Taking the inner product of the momentum equation \eqref{eq:sys3: mom} with $\vv$ yields:
  \begin{align}\label{eq:proof1: mom}
    0 &= \partial_t K + \div (K \vv) - \vv \cdot \div \mathbf{S}  + \displaystyle\sum_\mA\trho_\mA \vv\cdot \nabla g_\mA + (1-\sum_\mB \phi_\mB)\vv \cdot \nabla \lambda- \rho\vv\cdot\mathbf{g},
  \end{align}
  where we have used the identity:
  \begin{subequations}
    \begin{align}
      \partial_t K +  \div (K \vv) =&~ \vv\cdot \left( \frac{\vv}{2}\dt\rho + \rho\dt\vv +\frac{1}{2} \vv \div(\rho \vv) + \rho \vv\cdot \nabla \vv \right).
    \end{align}
  \end{subequations}
  Next, multiplying \eqref{eq:sys3: mass} with $\rho_\mA g_\mA+\rho_\mA g y-\lambda$ and \eqref{eq:sys3: chem} with $-\partial_t \phi_\mA$, and subsequently summing over $\mA$ yields:
  \begin{align}\label{eq:proof1: phase+chem}
      0 = &~\partial_t (G + \Psi) + \sum_\mA (g_\mA\rho_\mA+gy \rho_\mA-\lambda) \div(\phi_\mA\vv) \nn\\
      &~+ \sum_\mA (g_\mA- \rho_\mA^{-1}\lambda)\div \bJ_\mA - \div \left(\sum_\mA \partial_t \phi_\mA \dfrac{\partial \Psi}{\partial \nabla \phi_\mA} \right),
  \end{align}
  where we have used the identities:
  \begin{subequations}
      \begin{align}
      \partial_t G = &~ gy \sum_\mA \rho_\mA \partial_t \phi_\mA, \\
      \partial_t \Psi - \div \left( \partial_t \phi_\mA \dfrac{\partial \Psi}{\partial \nabla \phi_\mA} \right) =&~ \sum_\mA\partial_t \phi_\mA \dfrac{\partial \Psi}{\partial \phi_\mA} - \sum_\mA\partial_t \phi_\mA \div \left(\dfrac{\partial \Psi}{\partial \nabla \phi_\mA}\right) ,\\
      \sum_\mA gy  \div \bJ_\mA  =&~0.
  \end{align}
  \end{subequations}
  Multiplying \eqref{eq:sys2: div} with $\lambda$ provides:
  \begin{align}\label{eq:proof1: div}
 0 = \lambda\div \vv  + \lambda\displaystyle\sum_\mA \rho_\mA^{-1} \div \bJ_\mA.
 \end{align}
Addition of \eqref{eq:proof1: mom}, \eqref{eq:proof1: div},  and \eqref{eq:proof1: phase+chem} gives:
\begin{align}
  \partial_t (K + G + \Psi)  + \div ((K+\Psi + G) \vv) - \div (\mathbf{S}\vv)  +  \div ( \vv^T \mathbf{K} ) & \nn\\
  + \div( (1-\sum_\mB \phi_\mB)\vv \lambda )  + \div (\sum_\mA g_\mA \bJ_\mA)- \div \left(\sum_\mA \dot{\phi}_\mA \dfrac{\partial \Psi}{\partial \nabla \phi_\mA} \right) &~= -\mathbf{S}:\nabla \vv + \sum_\mA \nabla g_\mA \cdot \bJ_\mA,
\end{align}
  where we have used the identities:
  \begin{subequations}
      \begin{align}
     \div (G \vv) = &~- \rho\vv\cdot\mathbf{g} +  \sum_\mA (gy \rho_\mA) \div(\phi_\mA\vv),\\
      \sum_\mA g_\mA \div \bJ_\mA = &~ - \sum_\mA \nabla g_\mA \cdot \bJ_\mA + \sum_\mA \div (g_\mA \bJ_\mA),\\
      \sum_\mA \div (g_\mA \trho_\mA \vv) = &~ \sum_\mA \trho_\mA \vv\cdot \nabla g_\mA + g_\mA \rho_\mA \div (\phi_\mA \vv),\\
      \div( (1-\sum_\mB \phi_\mB)\vv \lambda ) =&~  (1-\sum_\mB \phi_\mB)\vv \cdot \nabla \lambda + \lambda\div((1-\sum_\mB\phi_\mB)\vv),\\
      - \div \left(\sum_\mA \partial_t \phi_\mA \dfrac{\partial \Psi}{\partial \nabla \phi_\mA} \right) =&~ - \div \left(\sum_\mA \dot{\phi}_\mA \dfrac{\partial \Psi}{\partial \nabla \phi_\mA} \right) + \div \left( \vv^T \sum_\mA \nabla \phi_\mA \otimes \dfrac{\partial \Psi}{\partial \nabla \phi_\mA} \right),
  \end{align}
  \end{subequations}
and where the (modified) Korteweg tensor $\mathbf{K}$ is given by:
 \begin{align}
     \mathbf{K} = (\displaystyle\sum_\mA (g_\mA \trho_\mA)-\Psi)\mathbf{I} +   \sum_\mA \nabla \phi_\mA \otimes \dfrac{\partial \Psi}{\partial \nabla \phi_\mA},
 \end{align}
 and where $\dot{\phi}_\mA := \partial_t \phi_\mA + \mathbf{v}\cdot \nabla \phi_\mA$ is the material derivative of $\phi_\mA$.
 Integration over $\Omega$ provides:
 \begin{align}
   \dfrac{{\rm d}}{{\rm d}t}\mathcal{E} = -\mathcal{D}(\bv,\left\{g_\mA\right\} ) + \mathcal{B},
 \end{align}
 where $\mathcal{B}$ is the boundary contribution:
 \begin{align}
   \mathcal{B} =&~ - \la (K+G+\Psi) \vv, \mathbf{n} \ra_{\partial\Omega} + \la \mathbf{S} \vv, \mathbf{n} \ra_{\partial\Omega} - \la \vv^T\mathbf{K}, \mathbf{n} \ra_{\partial\Omega} \nn\\
   &~- \la (1-\sum_\mB \phi_\mB)\vv \lambda , \mathbf{n} \ra_{\partial\Omega}  - \la \sum_\mA g_\mA \bJ_\mA , \mathbf{n} \ra_{\partial\Omega} + \la \sum_\mA \dot{\phi}_\mA \dfrac{\partial \Psi}{\partial \nabla \phi_\mA}, \mathbf{n} \ra_{\partial\Omega} 
 \end{align}
 and where we note the identities: 
  \begin{subequations}
      \begin{align}
        \mathbf{S}:\nabla \vv=&~ 2 \nu \left( \nabla^s \bv - \frac{1}{d} ({\rm div} \mathbf{v}) \mathbf{I}\right):\left(\nabla^s \bv - \frac{1}{d} ({\rm div} \mathbf{v}) \mathbf{I}\right)+ \nu \left(\bar{\lambda} + \frac{2}{d}\right)\left({\rm div} \mathbf{v}\right)^2 \geq 0,\\
        -\sum_\mA \nabla g_\mA \cdot \bJ_\mA =&~ \displaystyle\sum_{\mA,\mB} \left(\nabla g_\mA\right)^T M_{\mA\mB} \nabla g_\mB \geq 0.
  \end{align}
  \end{subequations}
\end{proof} 
\subsection{Variational formulation}\label{subsec:var_form}
In the following we only consider the following sets of boundary conditions
\begin{enumerate}
    \item[(A)] $\Omega$ is a hypercube and identified with the $d$-dimensional torus, i.e. we impose periodic boundary conditions.
    \item[(B)] The volume fractions satisfy $\sum_{\mB}\kappa_{\mA\mB}\nabla\phi_\mB\cdot\mathbf{n}\vert_{\partial\Omega}=0$
    \item[(C)] The chemical potentials satisfy $\sum_\mB M_{\mA\mB}\nabla g_\mB \cdot\mathbf{n}\vert_{\partial\Omega}=0$.
    \item[(D)] The velocity satisfies $\vv\vert_{\partial\Omega_1}=\mathbf{0}$ and  $\vv\cdot\mathbf{n}\vert_{\partial\Omega_2}=\mathbf{0}$, where $\partial\Omega=\partial\Omega_1\cup \partial\Omega_2$.
\end{enumerate}
To derive the weak formulation we introduce the notation $\la a,b\ra := \int_\Omega ab$, for arbitrary functions $a,b:\Omega \rightarrow {\R}^k$, $k\in\mathbb{N}_+$. Guided by the alternative strong form of the momentum equation \eqref{eq:ID: mom}, we utilize the skew-symmetric form of the convection term in the weak formulation of the momentum equation:
\begin{align}
   \mathbf{c}_{skw}(\mathbf{u},\vv,\w) := &~ \frac{1}{2}\la (\mathbf{u}\cdot\nabla)\vv,\w \ra  - \frac{1}{2}\la (\mathbf{u}\cdot\nabla)\w,\vv \ra,
\end{align}
where we note the identity:
\begin{align}
    \la \w ,\frac{1}{2} \vv \div(\rho \vv) + \rho \vv\cdot \nabla \vv \ra = \mathbf{c}_{skw}(\rho\vv,\vv,\w).% + \frac{1}{2}\la \rho \vv \otimes \vv , \w \otimes \mathbf{n} \ra_{\partial\Omega}.
\end{align}
With this we can recast the system into a variational formulation.
% \begin{align}
%   \la \dt\phi,\psi \ra &- \la \phi\vv, \nabla\psi\ra + \la \mathbf{M}(\phi)\nabla(\mu+\alpha p),\nabla\psi \ra + \zeta \la m(\phi)(\mu+\alpha p),\psi\ra= 0, \\
%   \la \mu,\xi \ra &- \gamma\la \nabla\phi,\nabla\xi \ra - \la f'(\phi),\xi \ra = 0,\\
%   \la \frac{\vv}{2}\dt\rho &+ \rho\dt\vv,\w \ra +  \mathbf{c}_{skw}(\rho\vv,\vv,\w) + \la \mathbf{S}(\phi,\nabla\vv),\nabla\w \ra \\
%   &- \la p,\div(\w)\ra + \la \phi\nabla\mu,\w \ra  = 0, \\
%   \la \div(\vv),q \ra &+ \alpha\la \mathbf{M}(\phi)\nabla(\mu+\alpha p),\nabla q \ra +\alpha\zeta \la m(\phi)(\mu+\alpha p),q\ra = 0
% \end{align}
Every smooth solution satisfies the variational formulation
\begin{subequations}\label{eq: var form}
\begin{align}
  \la \frac{\vv}{2}\dt\rho + \rho\dt\vv,\w \ra +  \mathbf{c}_{\rm skw}(\rho\vv,\vv,\w) + \la \mathbf{S},\nabla\w \ra - \la \lambda,\div\w \ra&\\
  + \sum_\mA\la \phi_\mA\nabla(\rho_\mA g_\mA-\lambda),\w \ra - \la \rho \mathbf{g}, \w \ra &~=~ 0, \\
  \la \dt\phi_\mA,\psi_\mA \ra - \la \phi_\mA\vv, \nabla\psi_\mA\ra + \frac{1}{\rho_\mA}\sum_\mB\la \mathbf{M}_{\mA,\mB}\nabla g_\mB,\nabla\psi_\mA \ra &~=~ 0, \\
  \la \div(\vv),q \ra + \sum_{\mA,\mB}\frac{1}{\rho_\mA}\la \mathbf{M}_{\mA,\mB}\nabla g_\mB,\nabla q \ra &~=~ 0\\
  \la \rho_\mA g_\mA,\xi_\mA \ra - \la \partial_{\nabla \phi_\mA}\Psi,\nabla\xi_\mA \ra - \la \partial_{\phi_\mA}\Psi,\xi_\mA \ra - \la \lambda,\xi_\mA \ra &~=~ 0,
\end{align}
\end{subequations}
for smooth test functions $(\psi,\xi,\w,q)$ with mean-free $q$.

\begin{lemma}[Structure-preserving properties variational form]\label[lemma]{lem:variational_TC}
 The variational formulation \eqref{eq: var form} satisfies the following structure-preserving properties:
 \begin{subequations}
     \begin{align}
    \text{(phase volume conservation)}  \qquad  &  \ddt \la \phi_\mA(t), 1\ra =~ 0, \label{eq:phase volume conservation}\\
    \text{(phase mass conservation)}  \qquad   &  \ddt \la \trho_\mA(t), 1\ra =~ 0, \label{eq:phase mass conservation}\\
    \text{(energy dissipation)}   \qquad  &  \ddt \mathcal{E}\left(\vv(t),\left\{\phi_\mA(t)\right\}\right)=~ - \mathcal{D}\left(\vv, \left\{g_\mA\right\}\right). \label{eq:energy dissipation}
     \end{align}
 \end{subequations}
\end{lemma}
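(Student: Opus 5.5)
The plan is to test the variational formulation \eqref{eq: var form} with suitably chosen admissible test functions and add the resulting identities, mirroring the strong-form computation in the preceding lemma.

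\emph{Phase volume conservation.} Choose $\psi_\mA=1$ in the phase equation. Both the convective term $\la \phi_\mA\vv,\nabla 1\ra$ and the diffusive term $\rho_\mA^{-1}\sum_\mB\la M_{\mA\mB}\nabla g_\mB,\nabla 1\ra$ vanish, so $\ddt\la\phi_\mA,1\ra=\la\dt\phi_\mA,1\ra=0$. Since $\rho_\mA$ is constant, $\la\trho_\mA,1\ra=\rho_\mA\la\phi_\mA,1\ra$, and phase mass conservation \eqref{eq:phase mass conservation} follows immediately. No boundary terms appear, because the boundary conditions (A)--(D) have already been built into the weak form.

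\emph{Energy identity.} Take $\w=\vv$ in the momentum equation, $\psi_\mA=\rho_\mA g_\mA+\rho_\mA g y-\lambda$ in the phase equations, $q=\lambda$ in the divergence equation, and $\xi_\mA=-\dt\phi_\mA$ in the chemical-potential equations. Then sum over $\mA$ and add all contributions. The term-by-term cancellations are as follows.

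\begin{enumerate}
\item Skew-symmetry gives $\mathbf c_{\rm skw}(\rho\vv,\vv,\vv)=0$, and $\la \tfrac{\vv}{2}\dt\rho+\rho\dt\vv,\vv\ra=\ddt\la K,1\ra$.
\item The coupling term $\sum_\mA\la\phi_\mA\nabla(\rho_\mA g_\mA-\lambda),\vv\ra$ cancels against $-\sum_\mA\la\phi_\mA\vv,\nabla(\rho_\mA g_\mA-\lambda)\ra$ from the phase equations.
\item The gravity part $-\sum_\mA\la\phi_\mA\vv,\rho_\mA g\nabla y\ra=-\la\rho\vv, g\mathbf j\ra$ cancels $-\la\rho\mathbf g,\vv\ra$, since $\mathbf g=-g\mathbf j$.
\item The time-derivative term $\la\dt\phi_\mA,\rho_\mA g y\ra$ summed over $\mA$ gives $\ddt\la G,1\ra$.
\item $\sum_\mA\la\dt\phi_\mA,\rho_\mA g_\mA-\lambda\ra$ cancels against $-\sum_\mA\la \rho_\mA g_\mA-\lambda,\dt\phi_\mA\ra$ from the chemical-potential equations with $\xi_\mA=-\dt\phi_\mA$. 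The remaining terms $\la\partial_{\nabla\phi_\mA}\Psi,\nabla\dt\phi_\mA\ra+\la\partial_{\phi_\mA}\Psi,\dt\phi_\mA\ra$ give $\ddt\la\Psi,1\ra$ by the chain rule.
\item The pressure term $-\la\lambda,\div\vv\ra$ cancels $\la\div\vv,\lambda\ra$.
\item The diffusive terms with $\nabla(-\lambda)$ from the phase equations, $-\sum_{\mA,\mB}\rho_\mA^{-1}\la M_{\mA\mB}\nabla g_\mB,\nabla\lambda\ra$, cancel exactly the diffusive part of the divergence equation tested with $\lambda$.
\item The gravity-diffusion term $\sum_{\mA,\mB}\la M_{\mA\mB}\nabla g_\mB, g\nabla y\ra$ vanishes because $\sum_\mA M_{\mA\mB}=0$. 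This follows from the column sums of the mobility: $M_{\mB\mB}=\phi_\mB\sum_{\mA\ne\mB}m_{\mA\mB}\phi_\mA$, which is exactly minus the sum of the off-diagonal entries in column $\mB$.
\end{enumerate}

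What remains is the viscous term $\la\mathbf S,\nabla\vv\ra$ together with $\sum_{\mA,\mB}\la M_{\mA\mB}\nabla g_\mB,\nabla g_\mA\ra$. By the identities stated at the end of the previous proof, these add up to $\mathcal D$, which yields \eqref{eq:energy dissipation}.

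\emph{Main obstacle.} The main obstacle is the admissibility of the test functions. We need $q=\lambda$ to be mean-free, which we handle by normalising $\lambda$; the constant shift is harmless because $\sum_\mA\dt\phi_\mA=0$ and $\la\div\vv,1\ra=0$ under (A)--(D). We also need $\xi_\mA=-\dt\phi_\mA$ and $\psi_\mA$ to be smooth. This holds for smooth solutions, since the formulation is stated for them.

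A more delicate point is the time-derivative cancellation in step 5. It requires the exact pairing of $\la\dt\phi_\mA,\cdot\ra$ with the chemical-potential equation, which is precisely why $\lambda$ appears in both the $\psi_\mA$ weight and the chemical-potential equation. I would verify the signs of this step carefully first.
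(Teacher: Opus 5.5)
Your proposal is correct and follows essentially the same route as the paper. Phase volume and mass conservation come from $\psi_\mA=1$, and the energy law comes from the test functions $\w=\vv$, $\psi_\mA=\rho_\mA g_\mA+\rho_\mA g y-\lambda$, $\xi_\mA=-\dt\phi_\mA$, $q=\lambda$, with the same term-by-term cancellations. Your remarks on normalising $\lambda$ to be mean-free (so that $q=\lambda$ is admissible) and on the vanishing column sums of $M_{\mA\mB}$, which tacitly uses the symmetry $m_{\mA\mB}=m_{\mB\mA}$, make explicit two points the paper only asserts.
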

\begin{proof}
    The conservation of the phase volume results when substituting $\psi_\mA = 1$. Next, phase mass conservation follows when multiplying \eqref{eq:phase volume conservation} by $\rho_\mA$. 
    
    We proceed with the energy dissipation relation. 
    Selecting $\w=\vv$ provides:  
    \begin{align}\label{eq:proof2: mom}
    0 =&~ \dfrac{{\rm d}}{{\rm d}t} \la K ,1\ra + \la \mathbf{S},\nabla\vv \ra - \la \lambda,\div \vv \ra + \sum_\mA \la \phi_\mA\nabla (\rho_\mA g_\mA - \lambda),\vv \ra + \la \rho g \mathbf{j}, \vv\ra,
  \end{align}
  where we have used the identities:
  \begin{subequations}
    \begin{align}
  \dfrac{{\rm d}}{{\rm d}t} \la K ,1\ra   =&~ \la \tfrac{1}{2}\vv\partial_t\rho + \rho\partial_t\vv,\vv \ra, \\
  0 = &~  \mathbf{c}_{\rm skw}(\rho\vv,\vv,\vv).
    \end{align}
  \end{subequations}
  Next, taking $\psi_\mA = \rho_\mA g_\mA+ \rho_\mA g y-\lambda$ and $\xi_\mA = -\partial_t \phi_\mA$, and subsequently summing over $\mA$ yields:
\begin{align}\label{eq:proof2: phase}
 \dfrac{{\rm d}}{{\rm d}t} \la G + \Psi ,1\ra  - \sum_\mA \la \phi_\mA\vv, \nabla(\rho_\mA g_\mA)\ra - \sum_\mA \la \phi_\mA\vv, \nabla(\rho_\mA g y)\ra+ \sum_\mA \la \phi_\mA\vv, \nabla\lambda\ra&\nn\\
  + \sum_{\mA,\mB}\la \mathbf{M}_{\mA,\mB}\nabla g_\mB,\nabla g_\mA \ra    - \sum_{\mA,\mB}\la \mathbf{M}_{\mA,\mB}\nabla g_\mB,\nabla (\rho_\mA^{-1}\lambda) \ra &~=~ 0,    
\end{align}
where we have used the identities:
  \begin{subequations}
      \begin{align}
      \dfrac{{\rm d}}{{\rm d}t} \la G ,1\ra = &~ \sum_\mA \la \dt\phi_\mA,\rho_\mA g y \ra \\
      \dfrac{{\rm d}}{{\rm d}t} \la \Psi ,1\ra =&~ \sum_\mA\la \partial_{\nabla \phi_\mA}\Psi,\nabla \partial_t \phi_\mA \ra +\sum_\mA \la \partial_{\phi_\mA}\Psi,\partial_t \phi_\mA \ra,\\
      \sum_{\mA,\mB} \la \mathbf{M}_{\mA,\mB}\nabla g_\mB, g \mathbf{j} \ra  =&~0.
  \end{align}
  \end{subequations}
  Next, taking $q=\lambda$ provides:
  \begin{align}\label{eq:proof2: div}
 0=&~\la \div\vv, \lambda \ra + \sum_{\mA,\mB}\frac{1}{\rho_\mA}\la \mathbf{M}_{\mA,\mB}\nabla g_\mB,\nabla \lambda \ra ,
 \end{align}
 where we used $\la \div(\vv),\bar y \ra =0 $ and $\nabla \bar y =0$, and where $\bar{y}:=\la y,1\ra$ denotes the mean-value of $y$.
 Addition of \eqref{eq:proof2: mom} and \eqref{eq:proof2: phase} and \eqref{eq:proof2: div}, provides:
  \begin{align}
      \dfrac{{\rm d}}{{\rm d}t} \mathcal{E} =&~  - \la \mathbf{S},\nabla\vv \ra  - \sum_{\mA,\mB}\la \mathbf{M}_{\mA,\mB}\nabla g_\mB,\nabla g_\mA \ra = - \mathcal{D}\left(\vv, \left\{g_\mA\right\}\right),
  \end{align}
  where we have used the identity:
\begin{align}
   - g \sum_\mA \rho_\mA  \la \phi_\mA\vv, \nabla y\ra  + \la \rho g \mathbf{j}, \vv\ra % = - g \sum_\mA \rho_\mA  \la \phi_\mA\vv, \nabla y\ra + \sum_\mA \la \rho_\mA \phi_\mA g \nabla y, \vv\ra
   = 0.
\end{align}
\end{proof}

The variational formulation inherits the mixture-aware properties provided in \cref{sec:hierarchy_consistency}.

\begin{lemma}[Mixture-aware properties of the variational formulation]
\label[lemma]{lem:mixture_aware_variational}
Every smooth solution of the variational formulation \eqref{eq: var form} inherits the three mixture-aware properties of the continuum model provided in \cref{sec:hierarchy_consistency}.
\end{lemma}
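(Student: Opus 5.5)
The argument checks the three properties of \cref{sec:hierarchy_consistency} one at a time, working directly on \eqref{eq: var form} with suitably chosen test functions. Each check reduces to an algebraic identity of the mixture-aware closure that holds pointwise, independently of whether the equations are posed in strong or weak form.

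\emph{Merging identical phases.} Under the hypotheses of \cref{thm:hier_merge}, I sum the phase equations for $\mA=1$ and $\mA=N$ with a common test function $\psi_1=\psi_N=\psi$. Since $\rho_1=\rho_N$, the time-derivative and convective terms combine to the corresponding terms for $\widehat\phi_1$. The diffusive term requires two facts:
\begin{itemize}
\item $\kappa_{1\mA}=\kappa_{N\mA}$ and $\chi_{1\mA}=\chi_{N\mA}$ imply that $\partial_{\phi_1}\Psi-\partial_{\phi_N}\Psi$ is a logarithmic difference, while the gradient parts coincide. The chemical equations with $\xi_1=\xi_N$ therefore give $\rho_1(g_1-g_N)=\log\phi_1-\log\phi_N$ (times $\bar W/\varepsilon_0$).
\item The mobility structure gives $\sum_\mB(M_{1\mB}+M_{N\mB})\nabla g_\mB = -\sum_{\mB\neq 1,N}\hat m_{1\mB}\widehat\phi_1\phi_\mB\nabla(\hat g_1-g_\mB)+ (\text{terms}\propto m_{1N})$, where $\hat g_1$ is the volume-weighted combination $(\phi_1 g_1+\phi_N g_N)/\widehat\phi_1$. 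The $m_{1N}$ contributions cancel by antisymmetry.
\end{itemize}
To close the argument, I must show that $\hat g_1$ satisfies the reduced chemical equation and that $\phi_1\nabla g_1+\phi_N\nabla g_N=\widehat\phi_1\nabla \hat g_1$. This step uses the entropy identity $\phi_1\log\phi_1+\phi_N\log\phi_N=\widehat\phi_1\log\widehat\phi_1+(\text{mixing entropy})$ together with the fact that, by the first bullet, $\nabla(g_1-g_N)$ is proportional to $\nabla\log(\phi_1/\phi_N)$. The momentum term $\sum_\mA\la\phi_\mA\nabla(\rho_\mA g_\mA-\lambda),\w\ra$ and the divergence equation then merge in the same way.

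\emph{Faces and absence.} For \cref{thm:hier_face}, if $\phi_\mA\equiv0$ then $M_{\mA\mB}\equiv0$ and $M_{\mB\mA}\equiv0$, and $m_{\mB\mB'}$ only enters for $\mB,\mB'\neq\mA$. Consequently the phase-$\mA$ contributions drop out of the momentum, remaining mass, and divergence equations. The chemical equation for $g_\mA$ decouples: $g_\mA$ appears elsewhere only multiplied by $\phi_\mA$ or $M_{\cdot\mA}$. What remains is exactly the $(N-1)$-phase variational formulation, and $\Psi$ restricted to the face is the $(N-1)$-phase energy.

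For \cref{cor:hier_absence}, I note that the phase-$\mA$ mass equation is linear in $\phi_\mA$ with the transport and mobility terms proportional to $\phi_\mA$, so $\phi_\mA\equiv 0$ is a solution consistent with the remaining system. Uniqueness of smooth solutions, or a Gronwall argument on $\la\phi_\mA^2,1\ra$ using $\psi_\mA=\phi_\mA$, then yields invariance.

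The main obstacle is the merging step. The difficulty is that the reduced chemical potential $\hat g_1$ is not simply one of $g_1,g_N$, so one must verify that the weak chemical equations and the mobility fluxes recombine consistently. I would first try to establish the identity pointwise in strong form, following \cite{mixtureaware2026}, and then transfer it to \eqref{eq: var form} by choosing equal test functions for the merged labels.
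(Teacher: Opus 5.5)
Your closing fallback, namely establishing the reductions pointwise in strong form as in \cite{mixtureaware2026} and inserting them into the pairings of \eqref{eq: var form}, is exactly the paper's proof. That proof consists of nothing more than this transfer for smooth solutions. The explicit merging computation you sketch, however, would fail. The volume-weighted average $g^{\mathrm{vw}}:=(\phi_1g_1+\phi_Ng_N)/\widehat\phi_1$ is the wrong merged potential, and both identities you say you must prove are false for it. Set $c:=\bar W/(\varepsilon_0\rho_1)$, so that your first bullet reads $g_1-g_N=c\log(\phi_1/\phi_N)$. Then
\[
\widehat\phi_1\nabla g^{\mathrm{vw}}-(\phi_1\nabla g_1+\phi_N\nabla g_N)=\frac{g_1-g_N}{\widehat\phi_1}\,(\phi_N\nabla\phi_1-\phi_1\nabla\phi_N)=\frac{c\,\phi_1\phi_N}{\widehat\phi_1}\,\log\frac{\phi_1}{\phi_N}\,\nabla\log\frac{\phi_1}{\phi_N},
\]
which is nonzero wherever $\phi_1\neq\phi_N$ and $\nabla(\phi_1/\phi_N)\neq0$. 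The mixing-entropy identity you invoke is the obstruction, not the cure. With $\theta=\phi_1/\widehat\phi_1$ one has $g^{\mathrm{vw}}=\hat g_1+c\,(\theta\log\theta+(1-\theta)\log(1-\theta))$, where $\hat g_1$ is the correct merged potential
\[
\hat g_1:=g_1-c\log\frac{\phi_1}{\widehat\phi_1}=g_N-c\log\frac{\phi_N}{\widehat\phi_1}.
\]
So $g^{\mathrm{vw}}$ also violates the reduced chemical equation by the mixing-entropy term. The two expressions for $\hat g_1$ agree by your first bullet; equivalently, $\rho_1\hat g_1=\tfrac{\bar W}{\varepsilon_0}\log\bigl(e^{\varepsilon_0\rho_1g_1/\bar W}+e^{\varepsilon_0\rho_1g_N/\bar W}\bigr)$. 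The identity $\phi_1\nabla g_1+\phi_N\nabla g_N=\widehat\phi_1\nabla\hat g_1$ then follows immediately from $\phi_1\nabla\log\phi_1+\phi_N\nabla\log\phi_N=\nabla\widehat\phi_1=\widehat\phi_1\nabla\log\widehat\phi_1$.

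Relatedly, you do not get the reduced chemical equation by testing the merged labels with equal functions and summing. That yields an equation for $g_1+g_N$ containing $2\lambda$ and $\tfrac{\bar W}{\varepsilon_0}\log(\phi_1\phi_N)$. Instead, it is the single $\mA=1$ equation rewritten in terms of $\hat g_1$. The logarithms cancel there because $\chi_{11}=\chi_{1N}$ and $\kappa_{11}=\kappa_{1N}$ give $\partial_{\phi_1}\Psi=\partial_{\widehat\phi_1}\widehat\Psi+\tfrac{\bar W}{\varepsilon_0}\log(\phi_1/\widehat\phi_1)$ and $\partial_{\nabla\phi_1}\Psi=\partial_{\nabla\widehat\phi_1}\widehat\Psi$. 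Equal test functions are correct only for the mass equations. You also need the same identity in the fluxes $\bJ_\mA$, $\mA=2,\dots,N-1$, whose $\mB=1,N$ terms combine via $m_{\mA 1}=m_{\mA N}$. Your flux formula has the wrong sign: $\sum_\mB M_{\mA\mB}\nabla g_\mB=+\sum_{\mB\neq\mA}m_{\mA\mB}\phi_\mA\phi_\mB\nabla(g_\mA-g_\mB)$.

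Your face argument is fine at the paper's level of formality. For absence, drop the appeal to uniqueness of smooth solutions, which is not available. The Gronwall route works once you note that $g_\mA$ contains $\tfrac{\bar W}{\varepsilon_0\rho_\mA}\log\phi_\mA$. The mobility term is therefore not simply proportional to $\phi_\mA$: it contributes the nonnegative diffusion $\tfrac{\bar W}{\varepsilon_0\rho_\mA^2}\la(\sum_{\mB\neq\mA}m_{\mA\mB}\phi_\mB)\nabla\phi_\mA,\nabla\phi_\mA\ra$. The remainder has the form $\la\phi_\mA\mathbf b,\nabla\phi_\mA\ra=\tfrac12\la\mathbf b,\nabla\phi_\mA^2\ra$ with smooth $\mathbf b$. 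The same logarithm shows that a smooth $g_\mA$ is incompatible with $\phi_\mA\equiv0$, so the face and absence statements are formal; the paper's one-line proof shares this caveat. With these corrections, your direct verification becomes a self-contained version of what the paper only cites.
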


\begin{proof}
Because the solutions of the variational formulation are assumed smooth, the identities established pointwise in the proofs of \cref{thm:hier_merge,thm:hier_face}, and \cref{cor:hier_absence}
may be inserted directly into the duality pairings of \eqref{eq: var form}. In this sense, the present proof is the variational counterpart of the corresponding strong-form reductions.
\end{proof}

\section{Numerical scheme \& structural properties}\label{sec:scheme}
In this section, we will propose a fully-discrete finite element method and prove our main result on the structure-preservation properties. As a preparatory step, we first introduce the time-discretization in \cref{subsec:time}, and subsequently the spatial discretization in \cref{subsec:space}. 

\subsection{Time discretization}\label{subsec:time}
We partition the time interval $[0,T]$ with a family of time parameter $\{\tau_k\}>0$ and introduce $\Itau:=\{0=t^0,t^1=\tau_1,\ldots, t^{n_T}=T=\sum_k \tau_k\}$, where $n_T$ is the absolute number of time steps. We denote by $\Pi^1_c(\Itau), \Pi^0(\Itau)$ the spaces of continuous piece-wise linear and piecewise constant functions on $\Itau$. We introduce $g^{n+1}:=g(t^{n+1})$ and $g^n:=g(t^n)$ . We use superscripts to indicate possibly different time-level choices in the discrete terms. The notation $g^\star$ serves as a placeholder for the time level used in the skew-symmetric convection term, $g^\dagger$ for the time level used in the stress tensor, and $g^\ddagger$ for the time level used in the mobility tensor. Typical choices are $g^\star,\ g^\dagger,\ g^\ddagger \in\{g^n,g^{n+1}\}.$ Finally, we introduce the time difference and the discrete time derivative via
\begin{equation}
	 d^{n+1}_\tau g = \frac{g^{n+1}-g^n}{\tau_n}.
\end{equation}

The gradient-free part of the free energy, i.e $\Psi_0(\phi)$, is treated using time-averages in the spirit of \cite{BrunkBiot24}, i.e.
\begin{subequations}
    \begin{align}
 \partial_{\phi_\mA}\Psi(\left\{\phi_\mA^n\right\},\left\{\phi_\mA^{n+1}\right\}) :=&~ \frac{1}{\tau_n}\int_{t^{n}}^{t^{n+1}} \partial_{\phi_\mA}\Psi(\left\{\phi_\mA(s)\right\}) ds, \\
 \left\{\phi_\mA(s)\right\} =&~ (\left\{\phi_\mA^{n+1}\right\}-\left\{\phi_\mA^n\right\})\frac{(s-t^{n+1})}{\tau_n} + \left\{\phi_\mA^{n+1}\right\}.  \label{eq:timeavg} 
\end{align}
\end{subequations}
We note that one can also employ the well-known convex-concave splitting. 

Next, we introduce three regularizations. First, accommodating that $\phi_{\mA}$ might leave the desired interval $[0,1]$, we propose a regularization of the density $\rho$ as follows:
\begin{equation}
 \tilde\rho:=\sum_{\alpha} \rho_\mA\tilde\phi_\mA ,\qquad  \tilde \phi_{\mA} := \begin{cases}
     \phi_{\mA} , \phi_{\mA}\in[\delta,1] \\
     1, \phi_{\mA} \geq 1, \\
     \delta, \phi_{\mA} \leq \delta.
 \end{cases} 
\end{equation}
Second, we also use the regularization of the volume fractions in the mobility matrix:
\begin{align}
\tilde{M}_{\alpha\beta}(\boldsymbol{\phi})=&~-m_{\alpha\beta}\tilde{\phi}_\alpha\tilde{\phi}_\beta \quad (\alpha\neq\beta),
\end{align}
which is compatible with the row and column summation constraints of the mobility matrix.
Lastly, we utilize the polynomial regularization of the mixture-aware bulk free energy introduced in \cite{mixtureaware2026}.

We now propose the structure-preserving time-integration scheme:
\begin{subequations}\label{eq:sys time}
  \begin{align}
 \frac{\vv^{n+1}}{2}d^{n+1}_\tau \tilde{\rho} + \tilde{\rho}^n d^{n+1}_\tau\vv + \frac{1}{2} \vv^{n+1} \div(\rho^* \vv^*) + \rho^* \vv^*\cdot \nabla \vv^{n+1}&\nn\\
 - \div \mathbf{S}(\left\{\phi_\mA^{\dagger}\right\},\nabla \vv^{n+1}) + \nabla \lambda^{n+1}  + \sum_\mA \phi_\mA^n\nabla(\rho_\mA g_\mA^{n+1}-\lambda^{n+1}) -\rho(\left\{\phi_\mA^n\right\}) \mathbf{g} &~=0,\label{eq:sys time: mom}\\
 d^{n+1}_\tau\phi_\mA + \div(\phi_\mA^n\vv^{n+1}) - \div\left(\frac{1}{\rho_\mA}\sum_\mB\tilde{M}_{\mA,\mB}^{\ddagger} \nabla g_\mB^{n+1}\right) &~= 0, \label{eq:sys time: phase}\\
 \rho_\mA g_\mA^{n+1} - \partial_{\phi_\mA}\Psi(\left\{\phi_\mA^n\right\},\left\{\phi_\mA^{n+1}\right\}) + {\rm div} (\partial_{\nabla \phi_\mA} \Psi(\left\{\phi_\mA^{n+1}\right\})) - \lambda^{n+1} &~=0,\label{eq:sys time: chem}\\
 \div (\vv^{n+1})  -  \sum_{\mA,\mB} \div \left(\frac{1}{\rho_\mA}\tilde{M}_{\mA,\mB}^{\ddagger} \nabla g_\mB^{n+1}\right) &~= 0, \label{eq:sys time: div}
\end{align}
\end{subequations}

Note that one can show that solutions of \eqref{eq:sys time} conserve (total) phase volume, (total) phase mass, and satisfy a discrete energy dissipation law with respect to the regularized total energy
\begin{subequations}
  \begin{align}
    \tilde{\mathcal E}\bigl(\vv,\{\phi_\mA\}\bigr):=&~
\int_\Omega \Psi\bigl(\{\phi_\mA\},\{\nabla \phi_\mA\}\bigr)
+\tilde{K}(\left\{\phi_\mA\right\},\vv) + G(\left\{\phi_\mA\right\}),\\
\tilde{K}(\left\{\phi_\mA\right\},\vv) :=&~\frac{\tilde\rho}{2}|\vv|^2.
\end{align}
\end{subequations}
The proof follows the same lines as \cref{lem:variational_TC} and is postponed to the proof for the fully-discrete scheme, i.e. \cref{thm:discreten_stable} which we will consider below.

\subsection{Space discretization}\label{subsec:space}
For the spatial discretisation we require that $\Th$ is a geometrically conforming partition of $\Omega$ into elements where $h$ is the maximal diameter of the elements in $\Th$.
%
% This assumption could again be relaxed to allow for adaptive mesh refinement.
%
The space of continuous and piecewise linear and quadratic functions over $\Th$ as well as the mean free are introduced via
\begin{subequations}\label{eq:defFespace}
\begin{align}
	\Vh &:= \{v \in H^1(\Omega)\cap C^0(\bar\Omega) : v|_K \in P_1(K) \quad \forall K \in \Th\},\\
	\Xh &:= \{\vv \in H^1(\Omega)^d\cap C^0(\bar\Omega)^d : \vv|_K \in P_2^d(K) \quad \forall K \in \Th\}\cap\{\vv\vert_{\partial\Omega_1}=0, \vv\cdot \mathbf{n}\vert_{\partial\Omega_2}=0  \},\\
	\Qh &:= \{v \in \Vh : \la v, 1\ra=0\}.
\end{align}
\end{subequations}
Here $P_k(K)$ denote the space of polynomials with maximal degree $k$ on $K$.
In the case of periodic boundary conditions, the boundary conditions incorporated in $\Xh$ are omitted.
 
This variational formulation allows to directly deduce a structure-preserving approximation.

\begin{problem}\label{prob:scheme}
    Let $(\{\phi_{\mA,h}^0\},\vv_{h}^0)\in \Vh^N\times\Xh$ be given. We seek functions $(\{\phi_{\mA,h}\},\vv_h)\in P_1^c(\Itau;\Vh^N\times\Xh)$ and $(\{g_{\mA,h}\},\lambda_h)\in P^0(\Itau;\Vh^N\times\Qh)$ such that
\begin{subequations}
    \begin{align}
    \la \tfrac{\vv_h^{n+1}}{2}\dtau\tilde\rho_h + \tilde\rho_h^n\dtau\vv_h,\w_h \ra +  \mathbf{c}_{\rm skw}(\rho_h^*\vv_h^*,\vv_h^{n+1},\w_h)+ \la \mathbf{S}(\{\phi_{\mA,h}^{\dagger}\},\nabla\vv_h^{n+1}),\nabla\w_h \ra & \nn \\
- \la \lambda_h^{n+1},\div \w_h \ra + \sum_\mA \la \phi_{\mA,h}^n \nabla(\rho_\mA g_{\mA,h}^{n+1}-\lambda_h^{n+1}),\w_h \ra + \la \rho(\left\{\phi_{\mA,h}^n\right\})g\mathbf{j}, \w_h\ra &~= 0, \label{eq:scheme3}\\
\la \dtau\phi_{\mA,h},\psi_{\mA,h} \ra - \la \phi_{\mA,h}^n\vv_h^{n+1}, \nabla\psi_{\mA,h}\ra + \la \frac{1}{\rho_\mA}\sum_\mB \tilde{M}_{\mA,\mB}^{\ddagger} \nabla g_{\mB,h}^{n+1},\nabla\psi_{\mA,h} \ra &~= 0, \label{eq:scheme1}\\
  \la \rho_\mA g_{\mA,h}^{n+1},\xi_{\mA,h} \ra- \la \partial_{\phi_{\mA,h}}\Psi(\{\phi_{\mA,h}^n\},\{\phi_{\mA,h}^{n+1}\}),\xi_{\mA,h} \ra - \la \partial_{\nabla \phi_{\mA,h}} \Psi(\{\phi_{\mA,h}^{n+1}\}),\nabla\xi_{\mA,h} \ra & \nn\\
  - \la \lambda_h^{n+1},\xi_{\mA,h} \ra &~= 0,\label{eq:scheme2}\\
  \la \div\vv_h^{n+1},q_h \ra + \la \sum_{\mA,\mB}  \frac{1}{\rho_\mA}\tilde{M}_{\mA,\mB}^{\ddagger} \nabla g_{\mB,h}^{n+1},\nabla q_h \ra &~= 0, \label{eq:scheme4}
\end{align}
\end{subequations}
holds for all $(\{\psi_{\mA,h}\},\{\xi_{\mA,h}\},\w_h,q_h)\in\Vh^N\times\Vh^N\times\Xh\times\Qh$ and for all $0\leq n < n_T.$
\end{problem}

\begin{theorem}[Fully-discrete structure-preserving properties]\label[theorem]{thm:discreten_stable}
    Every solution of Problem \ref{prob:scheme} satisfies the following structure-preserving properties:
 \begin{subequations}
     \begin{align}
    \text{(phase volume conservation)}  \qquad  &  \la \phi_{\mA,h}^{n+1},1 \ra = \la \phi_{\mA,h}^0,1 \ra, \label{eq:phase volume conservation fully-discrete}\\
    \text{(phase mass conservation)}  \qquad  &  \la \trho_{\mA,h}^{n+1},1 \ra = \la \trho_{\mA,h}^0,1 \ra, \label{eq:phase mass conservation fully-discrete}\\
    \text{(total volume conservation)}  \qquad  &  \la \sum_\mA \phi_{\mA,h}^{n+1},1 \ra = \la \sum_\mA\phi_{\mA,h}^0,1 \ra, \label{eq:total volume conservation fully-discrete}\\
    \text{(total mass conservation)}  \qquad  &  \la \rho_h^{n+1},1 \ra = \la \rho_{h}^0,1 \ra, \label{eq:total mass conservation fully-discrete}\\
    \text{(energy dissipation)}   \qquad  &   \tilde{\mathcal{E}}\left(\vv_h^{n+1},\left\{\phi_{\mA,h}^{n+1}\right\}\right) + \tau_n \mathcal{D}_h^{n+1}\leq\tilde{\mathcal{E}}\left(\vv_h^{n},\left\{\phi_{\mA,h}^{n}\right\}\right), \label{eq:energy dissipation  fully-discrete}
     \end{align}
 \end{subequations}   
for all $\mA = 1,...,N$, and for all $0 \leq n\leq n_T-1$, where the physical dissipation is:
\begin{align}
  \mathcal{D}_h^{n+1}:= \la \mathbf{S}(\{\phi_\mA^{\dagger}\},\nabla\vv_h^{n+1}),\nabla\vv_h^{n+1} \ra + \sum_{\mA,\mB}\la \tilde{M}_{\mA,\mB}^{\ddagger}\nabla g_{\mB,h}^{n+1},\nabla g_{\mA,h}^{n+1} \ra.
\end{align}
In addition, if the volume saturation constraint holds for the initial data, then it holds for all time steps, i.e. 
    \begin{align}\label{eq: saturation constraint discrete}
 \sum_\mA \phi_{\mA,h}^0(x) = 1 \qquad \Longrightarrow \qquad \sum_\mA \phi_{\mA,h}^k(x) = 1, \qquad k=1,...,n_T, x \in \Omega.
    \end{align}
\end{theorem}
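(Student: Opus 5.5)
The conservation statements come from constant test functions, the energy inequality from the discrete analogue of the test-function choice in \cref{lem:variational_TC}, and the saturation result from a uniqueness argument for the discrete constraint equation.

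\textbf{Conservation.} Take $\psi_{\mA,h}=1\in\Vh$ in \eqref{eq:scheme1}. Both the convective and the diffusive terms contain $\nabla\psi_{\mA,h}=0$, so $\la \dtau\phi_{\mA,h},1\ra=0$. Induction in $n$ then gives \eqref{eq:phase volume conservation fully-discrete}. Multiplying by the constant $\rho_\mA$ gives \eqref{eq:phase mass conservation fully-discrete}. Summing over $\mA$ gives the total volume identity \eqref{eq:total volume conservation fully-discrete}, and summing the mass identities gives \eqref{eq:total mass conservation fully-discrete}.

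\textbf{Energy inequality.} I mimic the proof of \cref{lem:variational_TC}, using the discrete test functions
\[
\w_h=\vv_h^{n+1},\qquad \psi_{\mA,h}=\rho_\mA g_{\mA,h}^{n+1}+\rho_\mA g\,I_h y-\lambda_h^{n+1},\qquad \xi_{\mA,h}=-\dtau\phi_{\mA,h},\qquad q_h=\lambda_h^{n+1}.
\]
Here $I_h y = y$, since $y$ is linear and hence lies in $\Vh$. All of these are admissible because $\lambda_h^{n+1}\in\Qh$.

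In the momentum equation, $\mathbf{c}_{\rm skw}(\cdot,\vv_h^{n+1},\vv_h^{n+1})=0$. The kinetic terms satisfy the pointwise identity
\[
\tfrac{b}{2}(a^{n+1}-a^n)+a^n(b-b^n)\cdot b=\tfrac12 a^{n+1}|b|^2-\tfrac12 a^n|b^n|^2+\tfrac12 a^n|b-b^n|^2 ,
\]
with $a=\tilde\rho_h$ and $b=\vv_h^{n+1}$. This gives $\tilde K^{n+1}-\tilde K^n$ plus a numerical dissipation term that is nonnegative because $\tilde\rho_h^n\ge\delta\min_\mA\rho_\mA>0$. This is exactly why the regularized density appears in the kinetic energy.

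The coupling terms cancel as in the continuum case:
\begin{itemize}
\item $\sum_\mA\la\phi^n_{\mA,h}\nabla(\rho_\mA g_\mA-\lambda),\vv\ra$ from momentum cancels against $-\la\phi^n_{\mA,h}\vv,\nabla\psi_\mA\ra$ from the phase equation.
\item The gravity term $\la\rho(\phi^n)g\mathbf{j},\vv\ra$ cancels against $-\sum_\mA\la\phi^n_{\mA,h}\vv,\rho_\mA g\nabla y\ra$.
\item $-\la\lambda,\div\vv\ra$ cancels against $q_h=\lambda$ in \eqref{eq:scheme4}.
\item The $\lambda$ part of the diffusive fluxes cancels against the mobility term in \eqref{eq:scheme4}.
\item The $y$ part of the fluxes vanishes, using $\sum_\mA\tilde M_{\mA\mB}=0$, which holds by construction of the regularized mobility.
\end{itemize}

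The chemical potential equation tested with $-\dtau\phi$ produces $\la\rho_\mA g_\mA,\dtau\phi_\mA\ra$, which cancels with the phase-equation term $\la\dtau\phi_\mA,\rho_\mA g_\mA\ra$. The remaining terms are:
\begin{itemize}
\item The time-averaged bulk term. By the fundamental theorem of calculus along the linear path \eqref{eq:timeavg}, it equals exactly $\tau_n^{-1}(\Psi_0^{n+1}-\Psi_0^n)$ after summing over $\mA$.
\item The gradient term. Since $\bar\kappa$ is symmetric positive semidefinite, $\bar\kappa(a-b)\cdot a\ge \tfrac12(\bar\kappa a\cdot a-\bar\kappa b\cdot b)$, which yields an inequality.
\item The $\lambda$ term $\sum_\mA\la\lambda,\dtau\phi_\mA\ra$. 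This cancels with the $-\lambda$ part of $\psi_{\mA,h}$ tested against $\dtau\phi_\mA$.
\end{itemize}
The gravitational energy difference comes from $\la\dtau\phi_\mA,\rho_\mA g y\ra$.

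The main obstacle is sign and bookkeeping: verifying every $\lambda$ and gravity cross term cancels with the chosen time levels ($\phi^n$ in both the transport and the momentum coupling, and the same $\tilde M^\ddagger$ in \eqref{eq:scheme1} and \eqref{eq:scheme4}). Multiplying the summed identity by $\tau_n$ gives \eqref{eq:energy dissipation fully-discrete}.

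\textbf{Saturation.} Let $\Sigma^k=\sum_\mA\phi^k_{\mA,h}-1\in\Vh$. Summing \eqref{eq:scheme1} over $\mA$ and subtracting \eqref{eq:scheme4} shows that, for $\psi_h\in\Qh$,
\[
\la d_\tau\Sigma,\psi_h\ra-\la\Sigma^n\vv_h^{n+1},\nabla\psi_h\ra+\la\div\vv_h^{n+1},\psi_h\ra\cdot 0=0 .
\]
More precisely, $\la\div\vv,\psi\ra=-\la\vv,\nabla\psi\ra$ by the boundary conditions, which turns $\la\sum_\mA\phi_\mA^n\vv,\nabla\psi\ra$ into $\la\Sigma^n\vv,\nabla\psi\ra$. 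Thus $\la\Sigma^{n+1}-\Sigma^n,\psi_h\ra=\tau_n\la\Sigma^n\vv,\nabla\psi_h\ra$ for all $\psi_h\in\Qh$. Also $\la\Sigma^{n+1}-\Sigma^n,1\ra=0$ by \eqref{eq:total volume conservation fully-discrete}, so the identity extends to all of $\Vh$.

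Inductively, $\Sigma^n=0$ forces $\la\Sigma^{n+1},\psi_h\ra=0$ for every $\psi_h\in\Vh$. Since $\Sigma^{n+1}\in\Vh$, this gives $\Sigma^{n+1}=0$ pointwise.
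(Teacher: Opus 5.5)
Your proposal is correct and follows the paper's proof essentially step for step. It uses the same test functions $\w_h=\vv_h^{n+1}$, $\psi_{\mA,h}=\rho_\mA g_{\mA,h}^{n+1}+\rho_\mA g y-\lambda_h^{n+1}$, $\xi_{\mA,h}=-\dtau\phi_{\mA,h}$ and $q_h=\lambda_h^{n+1}$, with the same cancellations and the same appeal to positive semidefiniteness of $\bar\kappa$ and positivity of $\tilde\rho_h^n$. For the saturation constraint you also subtract \eqref{eq:scheme4} from the summed phase equation, as the paper does. The only differences are cosmetic. You extend the saturation identity to all of $\Vh$ via total volume conservation, whereas the paper notes that $\sum_\mA\phi_{\mA,h}^{n+1}-1$ is mean-free and tests with it directly in $\Qh$. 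Also, your first saturation display carries a stray ``$\cdot 0$'' term, but your ``more precisely'' line gives the correct identity.
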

\begin{proof}
    Conservation of phase volume follows again by insertion of $\psi_{\mA,h}=1\in\Vh$, and conservation of phase mass follows from multiplying \eqref{eq:phase volume conservation fully-discrete} with $\rho_\mA$. Next, conservation of total volume and total mass results from summing \eqref{eq:phase volume conservation fully-discrete} and \eqref{eq:phase mass conservation fully-discrete} over $\mA$.
    
    We proceed with the energy dissipation property. Taking $\w_h=\vv_h^{n+1}\in\Xh$ provides:  
    \begin{align}\label{eq:proof3: mom}
    0 =&~ \frac{1}{\tau_n}\la \tilde{K}(\left\{\phi_{\mA,h}^{n+1}\right\},\vv_h^{n+1}) - \tilde{K}(\left\{\phi_{\mA,h}^{n}\right\},\vv_h^{n}),1\ra + \tau_n\frac{\tilde\rho_h^n}{2}\norm{\dtau\vv_h}_0^2 + \la \mathbf{S}(\{\phi_\mA^{\dagger}\},\nabla\vv_h^{n+1}),\nabla\vv_h^{n+1} \ra \nn\\
    &~- \la \lambda_h^{n+1},\div \vv_h^{n+1} \ra + \sum_\mA \la \phi_{\mA,h}^n \nabla(\rho_\mA g_{\mA,h}^{n+1}-\lambda_h^{n+1}),\vv_h^{n+1} \ra + \la \rho(\left\{\phi_{\mA,h}^n\right\})g\mathbf{j}, \vv^{n+1}_h\ra,
  \end{align}
  where we have used the identities:
  \begin{subequations}
    \begin{align}
    \la \tfrac{1}{2}\vv_h^{n+1}\dtau\tilde\rho_h + \tilde\rho_h^n\dtau\vv_h^{n+1},\vv_h^{n+1} \ra  =&~\frac{1}{\tau_n}\la \tilde{K}(\left\{\phi_{\mA,h}^{n+1}\right\},\vv_h^{n+1}) - \tilde{K}(\left\{\phi_{\mA,h}^{n}\right\},\vv_h^{n}),1\ra \nn\\
    &~+ \tau_n\frac{\tilde\rho_h^n}{2}\norm{\dtau\vv_h}_0^2\\
    \mathbf{c}_{\rm skw}(\rho_h^*\vv_h^*,\vv_h^{n+1},\vv_h^{n+1}) = &~0.
    \end{align}
  \end{subequations}
   Next, we select $\psi_{\mA,h} = \rho_\mA g_{\mA,h}^{n+1}+\rho_\mA gy - \lambda^{n+1}_h \in \Vh$ and $\xi_{\mA,h}=-\dtau \phi_{\mA,h} \in \Vh$, summing over $\mA$, and subsequently adding the results provides:

    \begin{align}\label{eq:proof3: phase}
     0=&~  \sum_\mA\la \partial_{\phi_{\mA,h}}\Psi_0(\left\{\phi_{\mA,h}^n\right\},\{\phi_{\mA,h}^{n+1}\}),\dtau \phi_{\mA,h} \ra + \sum_\mA \la \partial_{\nabla \phi_{\mA,h}} \Psi_\nabla(\{\phi_{\mA,h}^{n+1}\}),\nabla\dtau \phi_{\mA,h} \ra \nn\\
     &~- \sum_\mA \la \phi_{\mA,h}^n\vv_h^{n+1}, \nabla(\rho_\mA g_{\mA,h}^{n+1})\ra+ \sum_\mA \la \phi_{\mA,h}^n\vv_h^{n+1}, \nabla\lambda_h^{n+1}\ra \nn\\
     &~+ \sum_{\mA,\mB}\la \tilde{M}_{\mA,\mB}^{\ddagger}\nabla g_{\mB,h}^{n+1},\nabla g_{\mA,h}^{n+1} \ra    - \sum_{\mA,\mB}\la \tilde{M}_{\mA,\mB}^{\ddagger}\nabla g_{\mB,h}^{n+1},\nabla (\rho_\mA^{-1}\lambda_h^{n+1}) \ra\nn\\
      &~+\frac{1}{\tau_n}\la G(\{\phi_{\mA,h}^{n+1}\}) - G(\{\phi_{\mA,h}^n\}),1\ra  - \sum_\mA \la \phi_{\mA,h}^n\vv_h^{n+1}, \nabla(\rho_\mA g y)\ra,
    \end{align}
  where we have used the identities:
  \begin{subequations}
      \begin{align}
       \sum_\mA \la \dtau\phi_{\mA,h},\rho_\mA gy  \ra = &~ \frac{1}{\tau_n}\la G(\{\phi_{\mA,h}^{n+1}\}) - G(\{\phi_{\mA,h}^n\}),1\ra,\\
      \sum_{\mA,\mB} \la \tilde{M}_{\mA,\mB}^{\ddagger}\nabla g_{\mB,h}^{n+1}, g \mathbf{j} \ra  =&~0.       
  \end{align}
  \end{subequations}
  Lastly, taking $q_h=\lambda_h^{n+1} \in \Qh$ provides:
  \begin{align}\label{eq:proof3: div}
 0=&~\la \div\vv_h^{n+1}, \lambda_h^{n+1} \ra + \sum_{\mA,\mB}\frac{1}{\rho_\mA}\la \tilde{M}_{\mA,\mB}^{\ddagger}\nabla g_{\mB,h}^{n+1},\nabla \lambda_h^{n+1} \ra, 
 \end{align}
 where we used that $\la \div(\vv_h^{n+1}),\bar y \ra=0$ and $\nabla\bar y =0$. Addition of \eqref{eq:proof3: mom}, \eqref{eq:proof3: phase}, and \eqref{eq:proof3: div} provides:
  \begin{align}
      \frac{1}{\tau_n}(\tilde{\mathcal{E}}(\{\phi_{\mA,h}^{n+1}\},\vv_h^{n+1}) - \tilde{\mathcal{E}}\left(\left\{\phi_{\mA,h}^{n}\right\},\vv_h^{n})\right) =&~  - \la \mathbf{S}(\{\phi_\mA^{\dagger}\},\nabla\vv_h^{n+1}),\nabla\vv_h^{n+1} \ra - \sum_{\mA,\mB}\la \tilde{M}_{\mA,\mB}^{\ddagger}\nabla g_{\mB,h}^{n+1},\nabla g_{\mA,h}^{n+1} \ra \nn\\
      &~  - \tau_n\frac{\tilde\rho_h^n}{2}\norm{\dtau\vv_h}_0^2-\frac{\tau_n}{2}\sum_{\mA\mB} \la \kappa_{\alpha\beta}\nabla\dtau \phi_{\mA,h},\nabla\dtau \phi_{\mB,h} \ra \nn\\
 \leq &~ - \mathcal{D}_h^{n+1},
  \end{align}
  where the inequality follows from the positive semi-definiteness of $\kappa_{\mA\mB}$ and the positivity of $\tilde\rho$. Furthermore we used the relations:
  \begin{subequations}
\begin{align}
    - \sum_\mA \la \phi_{\mA,h}^n\vv_h^{n+1}, \nabla(\rho_\mA g y)\ra   + \la \rho(\left\{\phi_{\mA,h}^n\right\})g\mathbf{j}, \vv^{n+1}_h\ra =&~ 0,\\
    %&\sum_\mA\la \partial_{\phi_{\mA,h}}\Psi(\{\phi_{\mA,h}^n\},\{\phi_{\mA,h}^{n+1}\}),\dtau \phi_{\mA,h} \ra + \la \partial_{\nabla \phi_{\mA,h}} \Psi(\{\phi_{\mA,h}^{n+1}\}),\nabla\dtau \phi_{\mA,h} \ra \nn \\
    %&= \frac{1}{\tau_n}\la \Psi(\{\phi_{\mA,h}^{n+1}\}) - \Psi(\left\{\phi_{\mA,h}^n\right\}),1\ra  -\frac{\tau_n}{2}\sum_{\mA\mB} \la \kappa_{\alpha\beta}\nabla\dtau \phi_{\mA,h},\nabla\dtau \phi_{\mB,h} \ra
    \sum_\mA \la \partial_{\phi_{\mA,h}}\Psi_0(\{\phi_{\mA,h}^n\},\{\phi_{\mA,h}^{n+1}\}),\dtau \phi_{\mA,h} \ra
=&~
\frac{1}{\tau_n}\la \Psi_0(\{\phi_{\mA,h}^{n+1}\})-\Psi_0(\{\phi_{\mA,h}^{n}\}),1\ra,
\\
\sum_\mA
\la \partial_{\nabla \phi_{\mA,h}}\Psi_\nabla(\{\phi_{\mA,h}^{n+1}\}),\nabla \dtau \phi_{\mA,h} \ra
=&~
\frac{1}{\tau_n}\la \Psi_\nabla(\{\phi_{\mA,h}^{n+1}\})-\Psi_\nabla(\{\phi_{\mA,h}^{n}\}),1\ra
\nn\\
&\quad
+\frac{\tau_n}{2}\sum_{\mA,\mB}
\la \kappa_{\mA\mB}\nabla \dtau \phi_{\mA,h},\nabla \dtau \phi_{\mB,h} \ra .
\end{align}\end{subequations}

  Finally, we consider the saturation constraint \eqref{eq: saturation constraint discrete}. Inserting $\psi_{\mA,h} = 1$ and summation over $\alpha$ shows
  \begin{align}
      \la \dtau (\sum_\mA \phi_{\mA,h} -1),1\ra = 0,
  \end{align}
  i.e. the mean value, zero, of the constraint is preserved. Next, we suppose that the constraint holds for time-step $n$ and consider the discrete evolution at time step $n+1$. We insert $\psi_{\mA,h}=\psi_h \in\Qh\subset\Vh$, $q_h=\psi_h\in\Qh$ and sum the phase equation over $\mA$ to find:
\begin{subequations}
    \begin{align}
\la \dtau \sum_\mA\phi_{\mA,h},\psi_{h} \ra - \la \sum_\mA\phi_{\mA,h}^n\vv_h^{n+1}, \nabla\psi_{h}\ra + \la \sum_{\mA,\mB}\frac{1}{\rho_\mA}\tilde{M}_{\mA,\mB}^{\ddagger} \nabla g_{\mB,h}^{n+1},\nabla\psi_{h} \ra &~= 0,\\
 \la \div\vv_h^{n+1},\psi_h \ra + \la \sum_{\mA,\mB}  \frac{1}{\rho_\mA}\tilde{M}_{\mA,\mB}^{\ddagger} \nabla g_{\mB,h}^{n+1},\nabla \psi_h \ra &~= 0.       
    \end{align}
\end{subequations}
Subtracting these equations provides:
\begin{equation}
  \la \dtau \sum_\mA\phi_{\mA,h},\psi_{h} \ra - \la \sum_\mA\phi_{\mA,h}^n\vv_h^{n+1}, \nabla\psi_{h}\ra  =\la \div\vv_h^{n+1},\psi_h \ra.
\end{equation}
Integration by parts and rearranging the equations yields:
\begin{align}
      \la \dtau (\sum_\mA\phi_{\mA,h}-1),\psi_{h} \ra  =-\la(1- \sum_\mA \phi_{\mA,h}^n),  \vv_h^{n+1}\cdot\nabla \psi_{h}\ra ,
\end{align}
where we used the assumed boundary conditions. Since the constraint holds for the old time-step we obtain 
\begin{equation}
   \la (\sum_\mA \phi_{\mA,h}^{n+1} - 1),\psi_h \ra =  0, \qquad \forall \psi_h\in\Qh.
\end{equation}
By mean-freedom we are allowed to take $\psi_h=\sum_{\mA}\phi_{\mA,h}^{n+1} - 1$ and conclude that the constraint holds pointwise everywhere in $\Omega$:
\begin{align}
    \sum_{\mA}\phi_{\mA,h}^{n+1}(x) = 1, \qquad x \in \Omega.
\end{align}
\end{proof}

\begin{remark}[Finite element spaces]
 Note that the results in \cref{thm:discreten_stable} are not restricted to this particular choice of finite element spaces. We only require that $\Vh$ and $\Qh$ are $H^1$-conforming spaces containing at least piecewise-linear functions and that the finite element pair $\Xh\times\Qh$ is inf-sup stable for the (Navier-)Stokes equation.
\end{remark}

%\section{Modeling of the free energy}
\section{Numerical results}\label{sec:numerics}

In this section we test the proposed numerical scheme. First, we consider a convergence test ($N=3$) in \cref{subsec:conv}, and a phase separation test ($N=3$) in \cref{subsec:phasesep}. Next, we consider a well-known rising bubble test case ($N=2$) in \cref{subsec:risingbubble}. Finally, we show a test of a rising bubble through two stratified liquid layers ($N=3$) in \cref{subsec:risingbubblethree}.
%The resulting nonlinear systems are tackled by Newton's method with an tolerance $10^{-\marking{xx}}.$
The resulting nonlinear systems are tackled by Newton's method, and the resulting linear systems are solved using a direct solver. The placeholder quantities are all evaluated at time-step $n+1$. 
The code is implemented in Fenics \cite{Fenics} as well as NGSolve \cite{schoberl2014c++}.

\subsection{Convergence test}\label{subsec:conv}

For the convergence test we consider $N=3$ phases, final time $T=0.1$, and domain $\Omega=[0,1]^2$. For the specific densities we choose $\rho_1 = 1, \rho_2=2, \rho_3=3$, and for the partial viscosities $\eta_1=0.01, \eta_2=0.02, \eta_3=0.03$. For the bulk potential and the mobility we consider the choices from Section \ref{subsec:time} and as capillary matrix we choose 
\begin{equation}
    \kappa = 3\cdot 10^{-5}\begin{pmatrix}
        5.93776285 & -4.93135582 &-1.00640703 \\
 -4.93135582 &  7.79781882 & -2.86646300 \\
 -1.00640703 & -2.86646300 &  3.87287003
    \end{pmatrix}.
\end{equation}
This capillary matrix is computed using the algorithm proposed in \cite{surfacetension2026} and corresponds to the surface tensions $\gamma_{12} = 0.007, \gamma_{13} = 0.005, \gamma_{23}=0.006$. This implies that the corresponding interface widths are
\begin{align}
    \varepsilon_{12}=0.0084, \qquad \varepsilon_{13}=0.0060, \qquad \varepsilon_{23}=0.0071.
\end{align}
The different density values ensure that the test is genuinely quasi-incompressible. Finally we consider periodic boundary conditions and the initial data:
\begin{subequations}
\begin{align}
   \phi_1(x,y)=&~0.3 + 0.21\sin(\pi x)\sin(2\pi y), \\ \phi_2(x,y)=&~ 0.3 + 0.24\cos(4\pi x)\sin(4\pi y)^2, \\
   \phi_3(x,y) =&~ 1-\phi_1(x,y) - \phi_2(x,y), \\ \vv_0(x,y) =&~ 10^{-1}(\sin(\pi x)^2\sin(2\pi y),\sin(\pi y)^2\sin(2\pi x))^\top.
\end{align}
\end{subequations}

%\subsection*{Space Convergence:}
Since no exact solution is available we compute the error using a refined solution for a constant and fixed time step size $\tau=5\cdot 10^{-3}$. The error quantities we consider are
\begin{align*}
   \text{err}(\left\{\phi_{\mA}\right\},h_k)&:= \max_{n\in\Itau}\norm*{\left\{\phi_{\mA,h_k}^n\right\} -\left\{\phi_{\mA,h_{k+1}}^n\right\}}_{L^2(\Omega)}^2 , \qquad \text{err}(\vv,h_k):=\max_{n\in\Itau}\norm{\vv^n_{{h_k}}-\vv^n_{h_{k+1}}}_0^2 \\
   \text{err}(\left\{g_{\mA}\right\},h_k)&:=\tau\sum_{n=1}^{n_T}\norm*{\left\{g_{\mA,h_k}^n\right\} -\left\{g_{\mA,h_{k+1}}^n\right\}}_{H^1(\Omega)}^2 , \qquad \text{err}(\nabla\vv,h_k):=\tau\sum_{n=1}^{n_T}\norm{\vv_{h_k}^n-\vv^n_{h_{k+1}}}_{H^1(\Omega)}^2, \\
   \text{err}(p,h_k)&:=\tau\sum_{n=1}^{n_T}\norm{p_{h_k}^n-p^n_{h_{k+1}}}_{L^2(\Omega)}^2.
\end{align*}
We consider mesh refinements with the mesh sizes $h_k \approx 2^{-(k+2)}$ for $k=1,\ldots,6$. In \cref{table_10-2} we show the errors and experimental orders of convergence.

\begin{table}[htbp!]
	\centering
	\small
	\caption{Errors and experimental order of convergence (eoc)}
	\begin{tabular}{|c||c|c|c|c|c|c|c|c|c|c|}
		\hline
		$ k $ & $\text{err}(\{\phi_\mA\})$ & eoc & $\text{err}(\{g_\mA\})$ & eoc & $\text{err}(\mathbf{v})$ & eoc & $\text{err}(\nabla\mathbf{v})$ & eoc & $\text{err}(p)$ & eoc \\
		\hline
		2 & $2.61\cdot 10^{-2}$ &     -- & $3.17\cdot 10^{-1}$ & --         & $9.68\cdot 10^{-4}$ & --     & $7.35\cdot 10^{-2}$ & --     & $3.16\cdot 10^{-4}$ & -- \\
		3 & $1.20\cdot 10^{-3}$ & $4.45$ & $4.22\cdot 10^{-1}$ & $-0.41$    & $6.85\cdot 10^{-4}$ & $0.50$ & $7.23\cdot 10^{-2}$ & $0.02$ & $2.06\cdot 10^{-4}$ & $0.62$ \\
		4 & $2.18\cdot 10^{-4}$ & $2.45$ & $1.38\cdot 10^{-1}$ & $\phm1.61$ & $5.52\cdot 10^{-5}$ & $3.63$ & $9.38\cdot 10^{-3}$ & $2.95$ & $1.72\cdot 10^{-5}$ & $3.58$ \\
		5 & $1.43\cdot 10^{-5}$ & $3.92$ & $3.36\cdot 10^{-2}$ & $\phm2.04$ & $3.09\cdot 10^{-6}$ & $4.16$ & $6.61\cdot 10^{-4}$ & $3.83$ & $1.11\cdot 10^{-6}$ & $3.95$ \\
		6 & $9.19\cdot 10^{-7}$ & $3.96$ & $8.12\cdot 10^{-3}$ & $\phm2.05$ & $1.91\cdot 10^{-7}$ & $4.01$ & $3.71\cdot 10^{-5}$ & $4.15$ & $6.81\cdot 10^{-8}$ & $4.02$ \\
		\hline
	\end{tabular}
	\label{table_10-2}
\end{table}

We observe that the obtained rates are in perfect agreement with the employed finite element spaces up to the $L^2$ norm of the velocity. Here approximation properties theoretically allow higher order, which is polluted here by the coupling terms with lower approximation rate.

\subsection{Phase separation test}\label{subsec:phasesep}
As a next test case we consider a phase separation example. In such cases, the evolution of concentration fields is governed by diffusion driven by chemical potential differences, leading to the formation of distinct phases and interfaces. A three-phase model captures a richer interplay between all phases, allowing investigation of ternary mixture separation, morphology development, and stability. For simplicity we will consider constant specific densities, i.e. $\rho_\mA=1$. The effects of unmatched specific densities for $N=2$ are considered in \cite{brunk2026simple} and leads to slower evolutions of the denser phases. From modeling perspective this means the model reduces to the incompressible $N$-phase NSCH model.
We choose the final time $T=5$, viscosities $\eta_\mA=0.1$. The initial data is inspired by \cite{Yang21} and given by
\begin{align}
    \psi_\mA = 0.5 + \mathcal{U}(x,y), \qquad \phi_\mA = \frac{\psi_\mA(x,y)}{\sum_\mB \psi_\mB(x,y)}, \qquad \vv_0(x,y)=0,
\end{align}
where $\mathcal{U}(x,y)$ samples the uniform distribution such that $\mathcal{U}(x,y)\in[-0.0025,0.0025]$ with zero mean. The capillary matrix is chosen as in the convergence test and amounts to three different surface tensions. This amounts to a mixed initial state with small perturbations. 
In Figure \ref{fig: phase sep 3} we show snapshots of the temporal evolution. For $N=2$ the result of such an experiment is fast droplet formation and then slow movement due to diffusion. In the binary ($N = 2$) case, phase separation is driven by the minimization of a single interfacial energy. As a result, the system reduces its total interface by forming rounded droplets, whose shape minimizes curvature and surface tension effects. In contrast, the ternary ($N = 3$) system involves three competing phases with different pairwise interfacial energies. Instead of forming isolated droplets, the system must balance multiple interfaces simultaneously, which promotes the formation of interconnected structures and triple junctions. Consequently, droplet formation is less favorable in $N = 3$, as it would increase the total interfacial energy with the other phases. As visualized in  Figure \ref{fig: phase sep 3 meta} the energy is non-increasing over time and the partial volume conservation holds up to machine precision. Since all related total volume and mass conservation laws arise from linear combinations of these, they are also at the level of machine precision, as predicted by the numerical scheme.

\begin{figure}[!ht]
\captionsetup[subfigure]{justification=centering}
\begin{subfigure}{0.9\textwidth}
        \centering
        \includegraphics[trim=0cm 66cm 20cm 00cm, clip,width=0.9\textwidth]{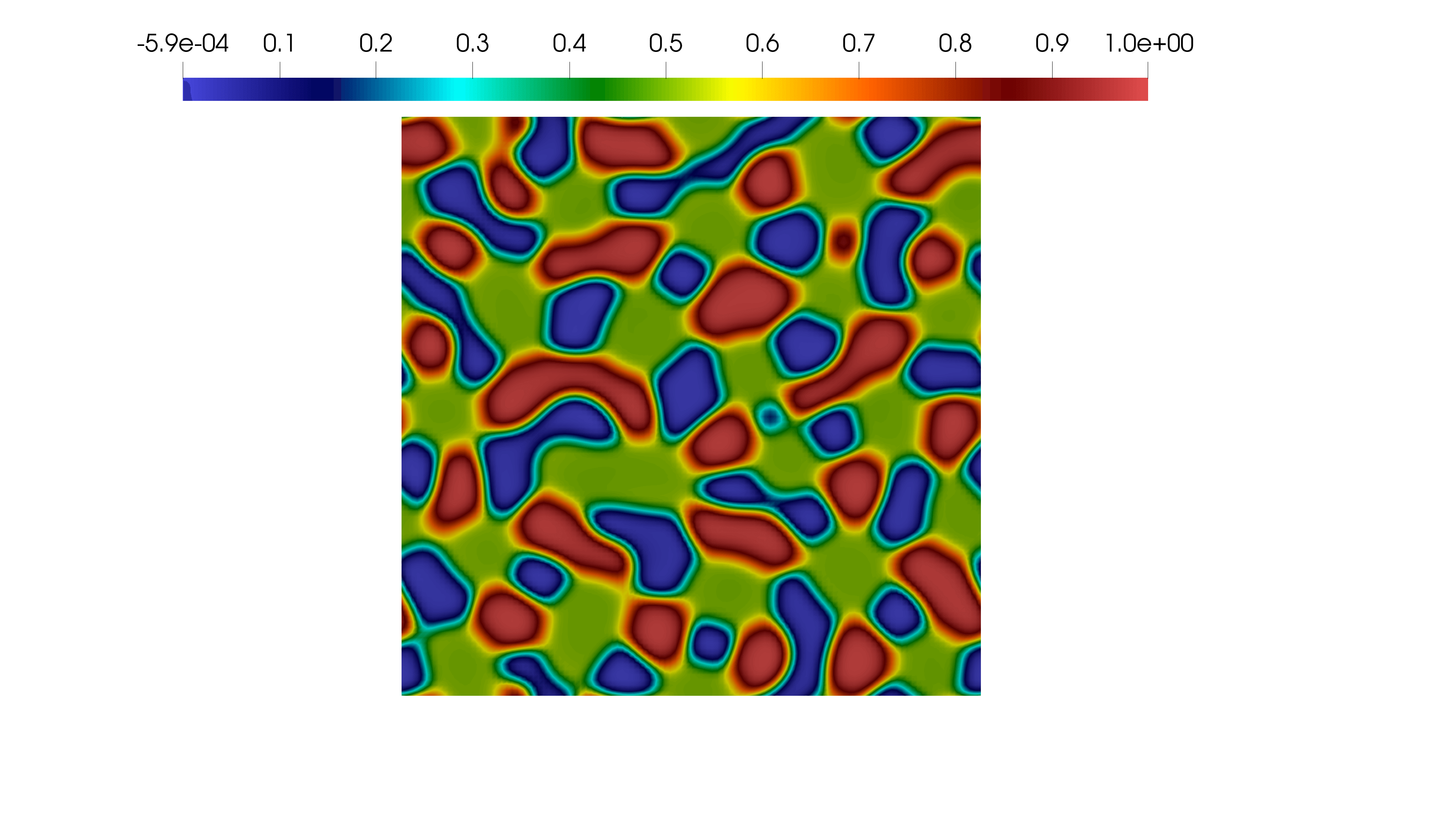}
    \end{subfigure}
\begin{subfigure}{0.196\textwidth}
\centering
\includegraphics[trim=40cm 10cm 40cm 10cm, clip,width=1\textwidth]{figures/phasesep/phi_merge.0100.png}
\caption{$t=0.1$}
\end{subfigure}
\begin{subfigure}{0.196\textwidth}
\centering
\includegraphics[trim=40cm 10cm 40cm 10cm, clip,width=1\textwidth]{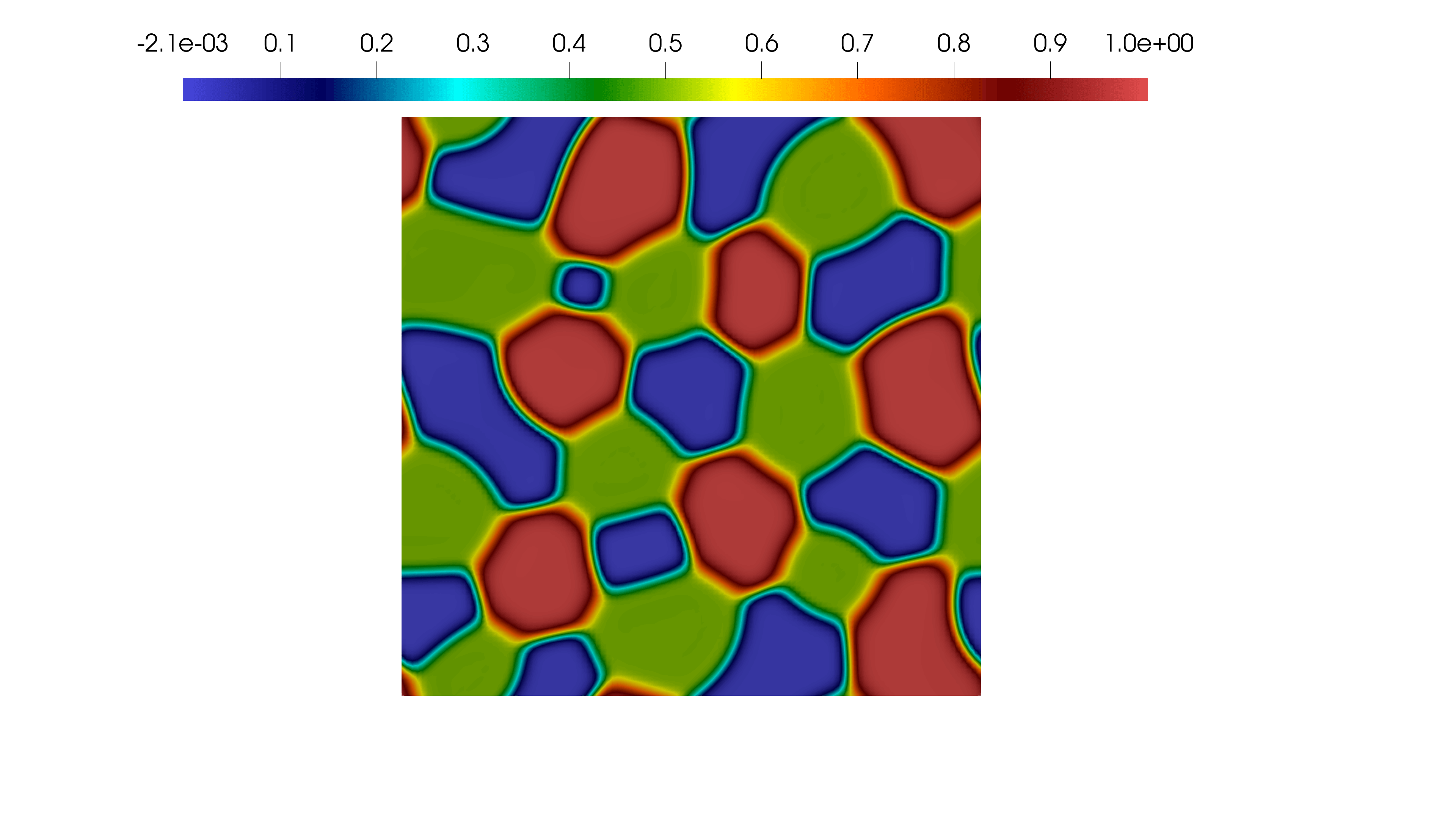}
\caption{$t=1.0$}
\end{subfigure}
\begin{subfigure}{0.196\textwidth}
\centering
\includegraphics[trim=40cm 10cm 40cm 10cm, clip,width=1\textwidth]{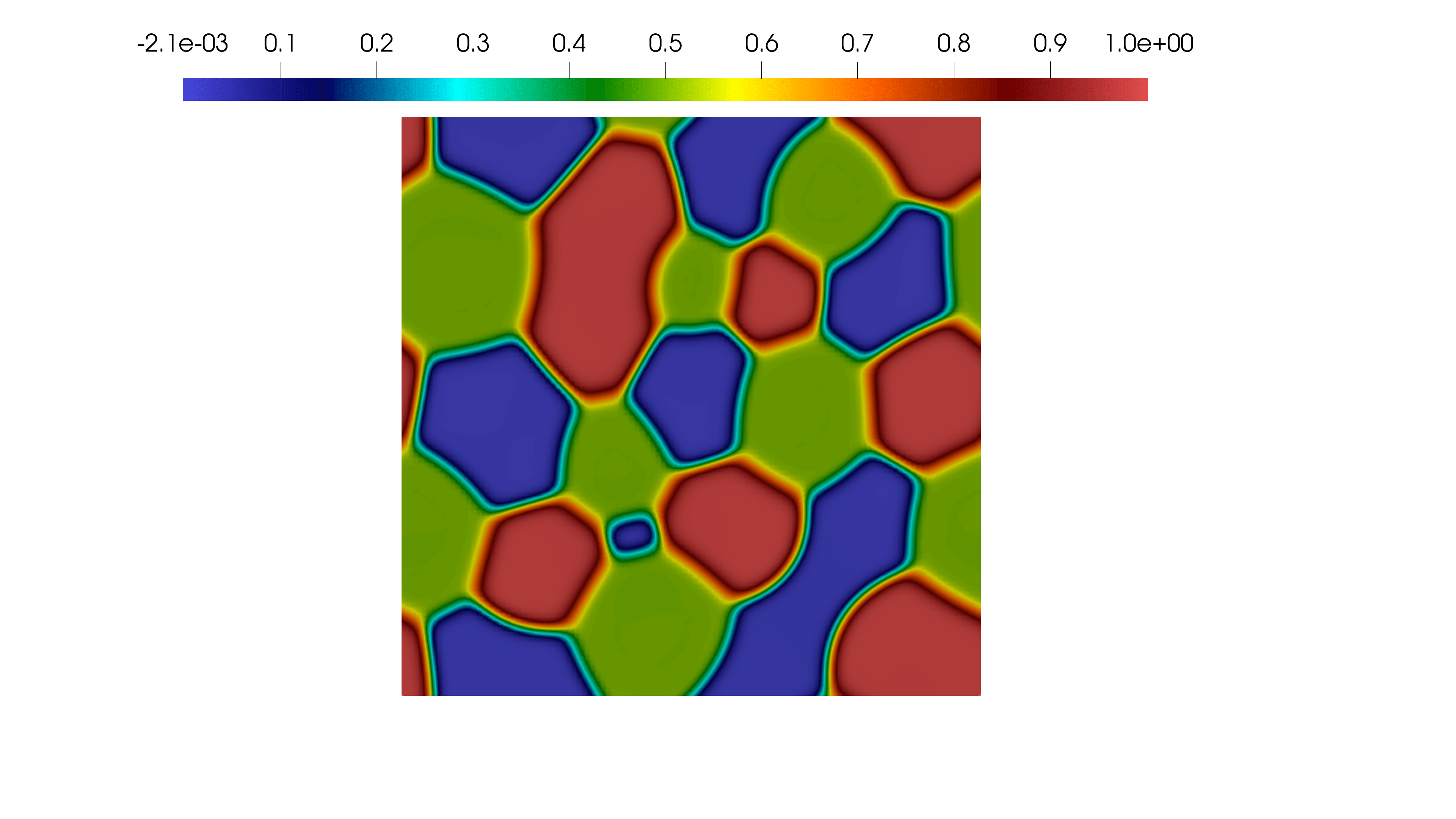}
\caption{$t=2.5$}
\end{subfigure}
\begin{subfigure}{0.196\textwidth}
\centering
\includegraphics[trim=40cm 10cm 40cm 10cm, clip,width=1\textwidth]{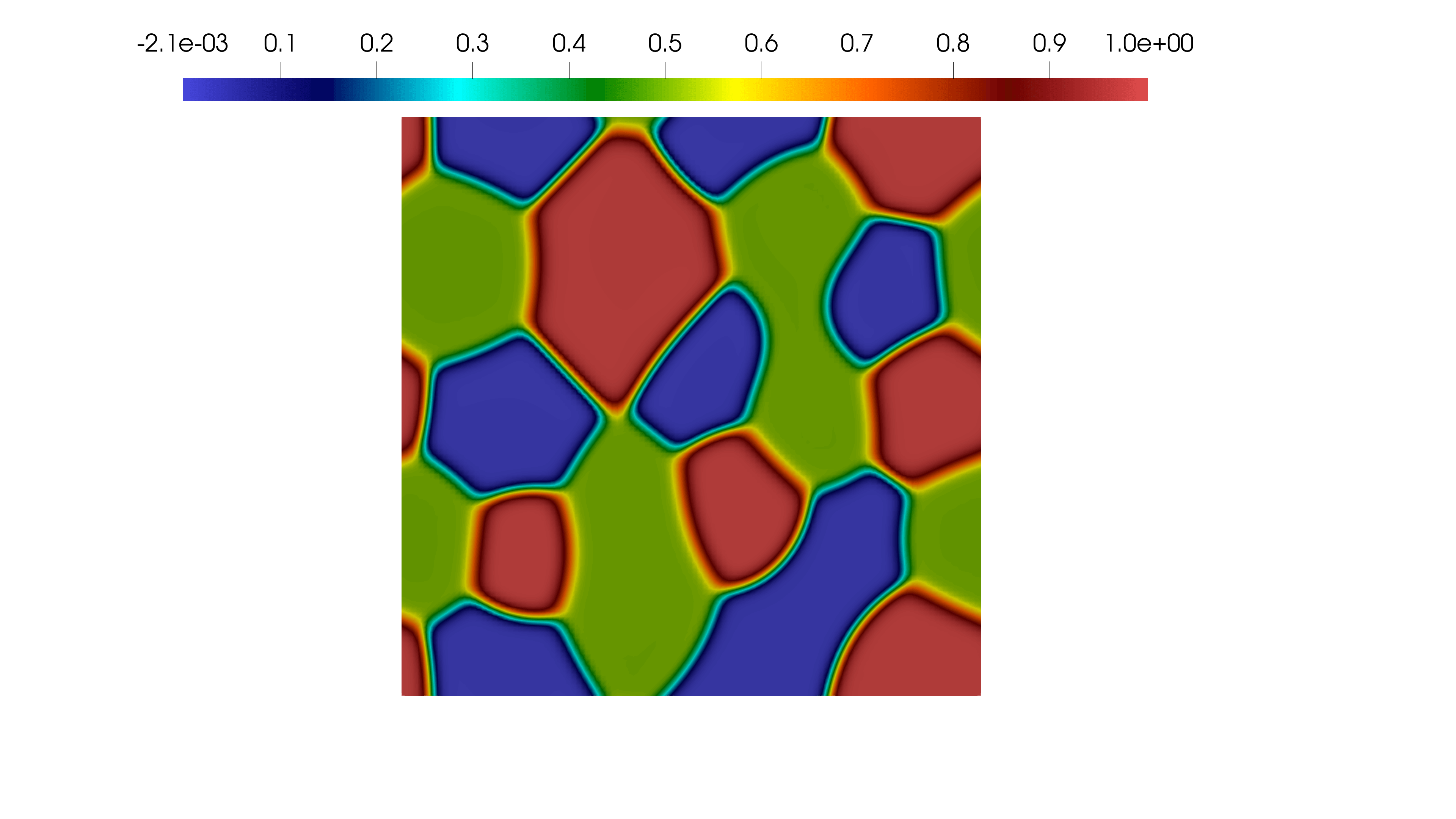}
\caption{$t=4$}
\end{subfigure}
\begin{subfigure}{0.196\textwidth}
\centering
\includegraphics[trim=40cm 10cm 40cm 10cm, clip,width=1\textwidth]{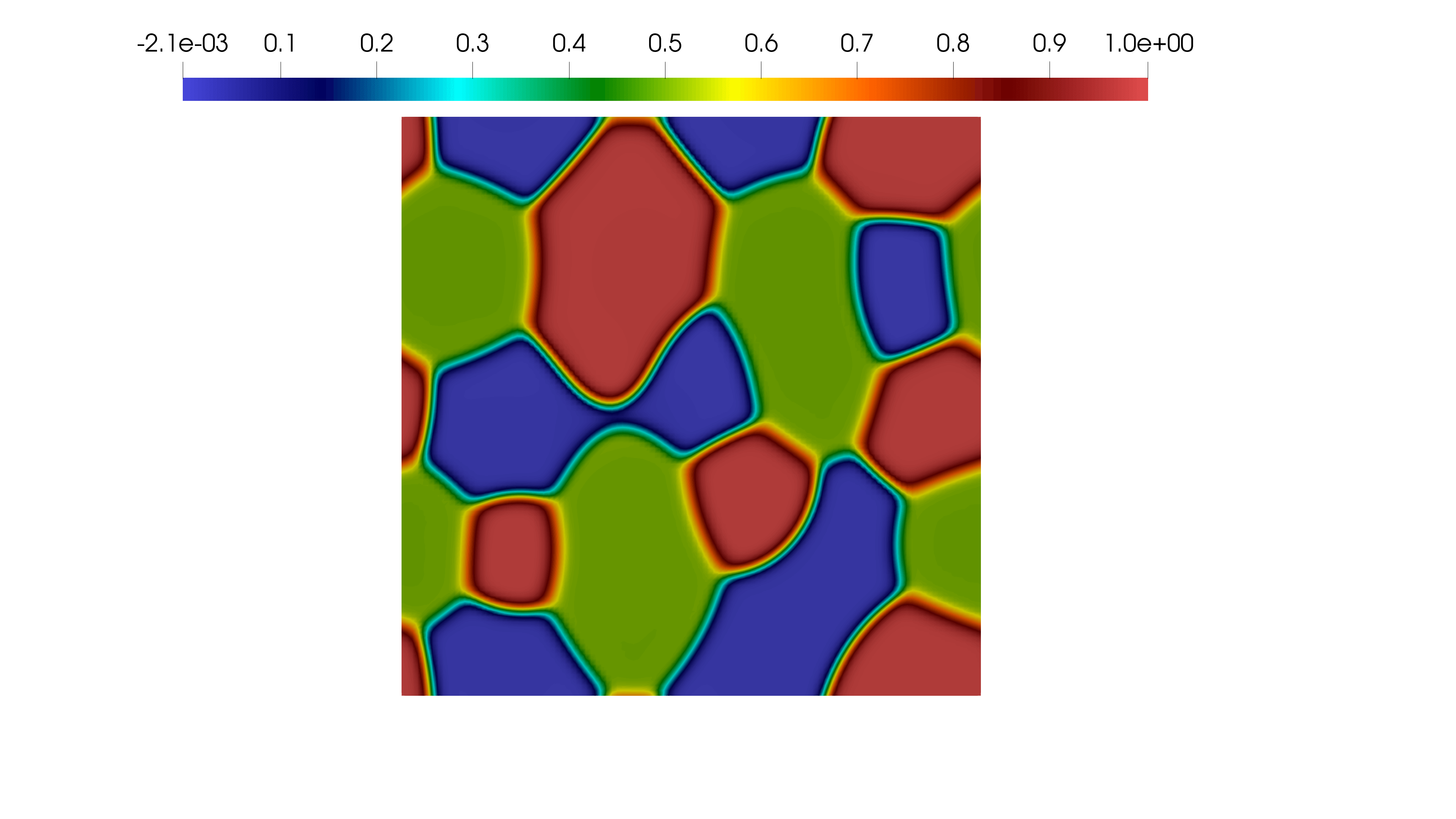}
\caption{$t=5$}
\end{subfigure}
\caption{Phase separation test. Temporal evolution of the phase field $\phi_1+0.5\phi_2$.}
\label{fig: phase sep 3}
\end{figure}

\begin{figure}[!ht]
\captionsetup[subfigure]{justification=centering}
\begin{subfigure}{0.49\textwidth}
\centering
\includegraphics[width=1\textwidth]{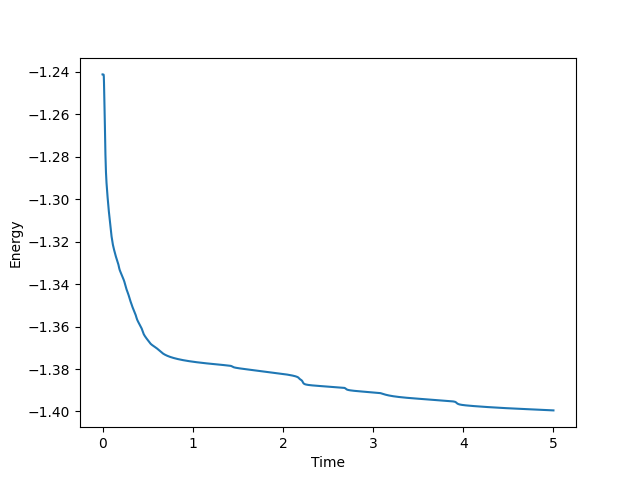}
\caption{Energy dissipation over time}
\end{subfigure}
\begin{subfigure}{0.49\textwidth}
\centering
\includegraphics[width=1\textwidth]{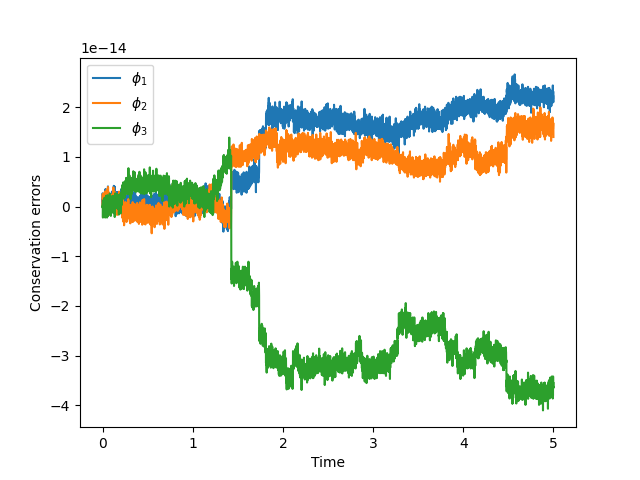}
\caption{Mass conservation error over time}
\end{subfigure}
\caption{Phase separation test. Energy dissipation and mass conservation}\label{fig: phase sep 3 meta}
\end{figure}

\newpage
\subsection{Two-fluid rising bubble test cases}\label{subsec:risingbubble}

\begin{wrapfigure}[20]{r}{0.4\textwidth}
\begin{center}
\begin{tikzpicture}
    % Draw the gray background rectangle
    \fill[fill=red!5,draw=black] (0, 0) rectangle (3, 6);
    
    % Draw the white circle with a black border
    \draw[fill=blue!5,draw=black] (1.5, 1.5) circle (0.75);
    
    % Add arrows to indicate dimensions
    \draw[<->] (0, -0.5) -- (3, -0.5) node[midway,below] {$1$};
    \draw[<->] (-0.5, 0) -- (-0.5, 6) node[midway,left] {$2$};
    \draw[<->] (3.5, 0.75) -- (3.5, 2.25) node[midway,right] {$D_0 = 0.5$};
    \draw[-] (1.5, 0.75) -- (3.5, 0.75);
    \draw[-] (1.5, 2.25) -- (3.5, 2.25);
    \draw[<->] (1.5, 0.0) -- (1.5, 1.5) node[midway,below left] {$0.5$};
    \draw[-] (1.5, 2.25) -- (3.5, 2.25);
    \node at (1.5, 6.3) {$v_1 = v_2 = 0$}; 
    \node at (1.5, -0.3) {$v_1 = v_2 = 0$};
    \node at (-1.2, 4.0) {$v_1 = 0$}; 
    \node at (3.7, 4.0) {$v_1 = 0$}; 
    \node at (1.5, 1.8) {fluid 2};
    \node at (1.5, 3.3) {fluid 1}; 
\end{tikzpicture}
    \caption{Rising bubble: Schematic representation of the problem setup}
    \label{fig:sketch 2D rising bubble problem}
\end{center}
\end{wrapfigure}
We consider the classical two-dimensional rising bubble benchmark of
Hysing et al.~\cite{hysing2009quantitative}. The computational domain is
$\Omega=[0,1]\times[0,2]$. Initially, a circular bubble of the lighter
fluid is placed in the lower part of the column, with diameter $D_0=2R_0=0.5$ and center $\mathbf{x}_c=(0.5,0.5)$. The surrounding
fluid is the heavier phase. The boundary conditions are no-penetration
on the vertical walls and no-slip on the horizontal walls, i.e.,
\begin{align}
    \mathbf{v}\cdot\mathbf{n}=&~0
    \quad\text{on } \{0,1\}\times[0,2],\nn\\
    \qquad
    \mathbf{v}=&~\mathbf{0}
    \quad\text{on } [0,1]\times\{0,2\}.
\end{align}
A schematic representation of the configuration is shown in
\cref{fig:sketch 2D rising bubble problem}.

Simulations were conducted on a uniform rectangular mesh. We simulate half of the domain and apply symmetry boundary conditions along the vertical line $x=1/2$. The time step size is set as $\Delta t_n = 0.128 h $, while $\varepsilon = h$. The benchmark problem consists of two cases, each defined by different parameter values, as listed in \cref{table: parameters 2D RB cases}. \\

\begin{table}[htbp]
\centering
\begin{tabularx}{\textwidth}{XXXXXXXXX}
%\hline\\[-6pt]
Case & \hspace{0.1cm} $\rho_1$ & \hspace{0.1cm} $\rho_2$ & $\nu_1$ & $\nu_2$ & \hspace{0.1cm} $\gamma_{12}$ & \hspace{0.1cm} $g$ & $\mathbb{A}{\rm r}$ & $\mathbb{E}{\rm o}$ \\[4pt]
\hline\\[-6pt]
\hspace{0.5cm}1 & $1000$ & $100$ & $10$ & $1$   & $24.5$ & $0.98$ & $35$ & \hspace{0.05cm} $10$   \\[6pt]
\hspace{0.5cm}2 & $1000$ & \hspace{0.1cm} $1$   &  $10$  & $0.1$ &$1.96$  & $0.98$ & $35$ & $125$  \\[6pt]
\hline
\end{tabularx}
\caption{Parameters for the two-dimensional rising bubble cases ($N=2)$.}
\label{table: parameters 2D RB cases}
\end{table}

The calibrated capillarity matrices, interface widths, and scale factors (for details, see \cite{surfacetension2026}) take the form
\begin{subequations}
    \begin{align} 
   & \text{Case 1}: && \text{Case 2}:\nn\\
    \bar{\boldsymbol{\kappa}} =&~{\small 10^{2}\begin{pmatrix}
4.0209152514 &-8.0418305027\\
-8.0418305027 &4.0209152514
\end{pmatrix}}, &  \bar{\boldsymbol{\kappa}} =&~{\small \begin{pmatrix}
2.5703710049 &-5.1407420098\\
-5.1407420098 &2.5703710049
\end{pmatrix}} \\
\varepsilon_{12} = &~24.3042511,  &  \varepsilon_{12} =&~ 1.94308232 \\
\varepsilon_0 = &~ 3.2144625 \times 10^{-4}, & \varepsilon_0 =&~  4.02066622 \times 10^{-3}
\end{align}
\end{subequations}

%-0.001 1.001 -0.001 2.001

We show in \cref{fig: case 1 phi,fig: case 2 phi} the evolution of the rising bubble (using mesh width $h=1/128$) for cases 1 and 2, respectively. In Case 1, the bubble remains close to its initial shape and only undergoes mild deformation. On the other hand, Case 2 shows significant shape changes due to relatively stronger inertial effects. 

\begin{figure}[!ht]
\captionsetup[subfigure]{justification=centering}
\begin{subfigure}{0.16\textwidth}
\centering
\includegraphics[width=1\textwidth]{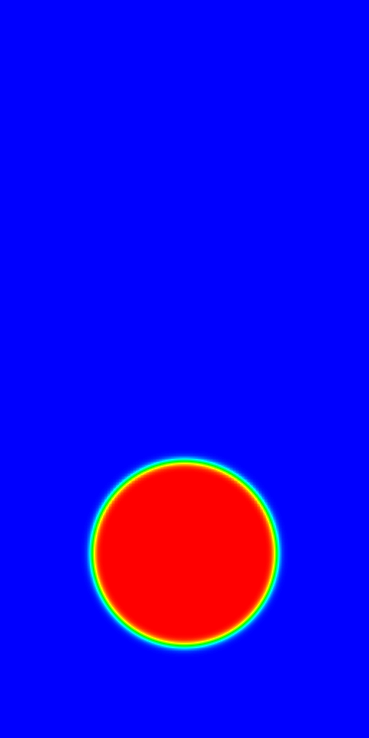}
\caption{$t=0.0$}
\end{subfigure}
\begin{subfigure}{0.16\textwidth}
\centering
\includegraphics[width=1\textwidth]{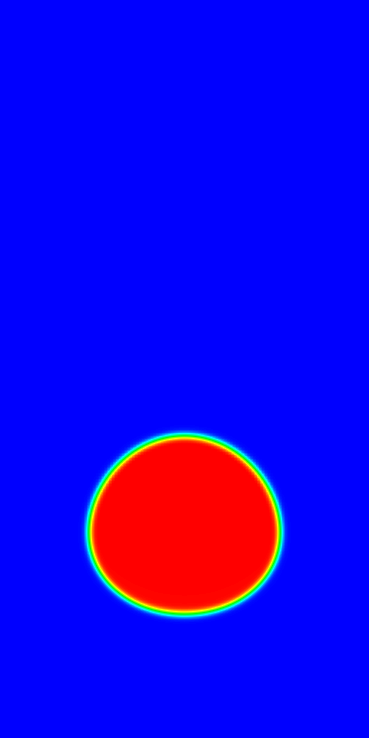}
\caption{$t=0.6$}
\end{subfigure}
\begin{subfigure}{0.16\textwidth}
\centering
\includegraphics[width=1\textwidth]{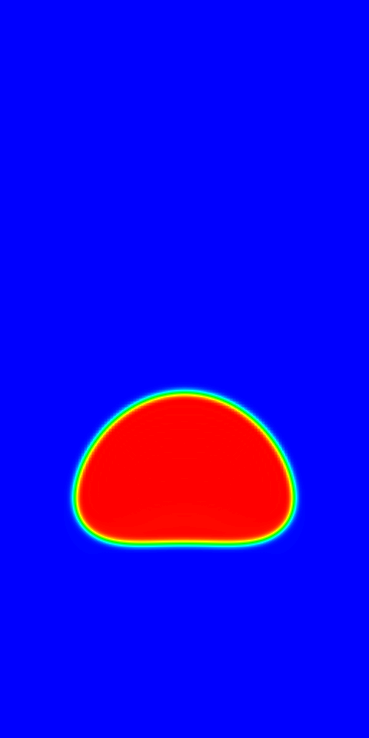}
\caption{$t=1.2$}
\end{subfigure}
\begin{subfigure}{0.16\textwidth}
\centering
\includegraphics[width=1\textwidth]{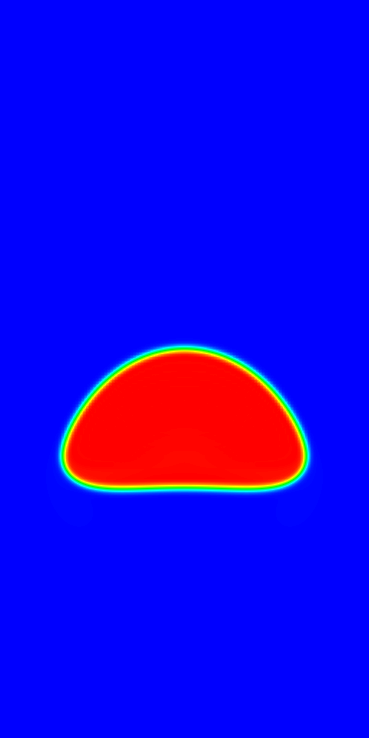}
\caption{$t=1.8$}
\end{subfigure}
\begin{subfigure}{0.16\textwidth}
\centering
\includegraphics[width=1\textwidth]{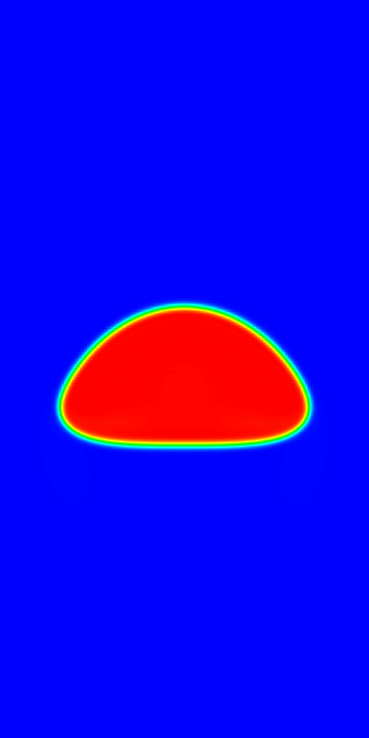}
\caption{$t=2.4$}
\end{subfigure}
\begin{subfigure}{0.16\textwidth}
\centering
\includegraphics[width=1\textwidth]{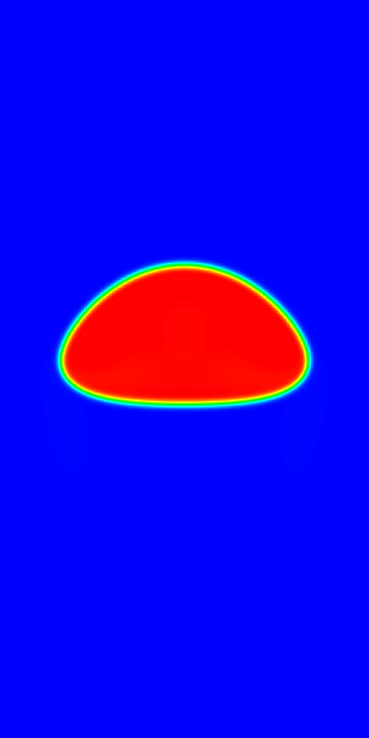}
\caption{$t=3.0$}
\end{subfigure}
\caption{Case 1. Visualization of the phase field $\phi_2$ for element size $h =1/128$.}
\label{fig: case 1 phi}
\end{figure}

\begin{figure}[!ht]
\captionsetup[subfigure]{justification=centering}
\begin{subfigure}{0.16\textwidth}
\centering
\includegraphics[width=1\textwidth]{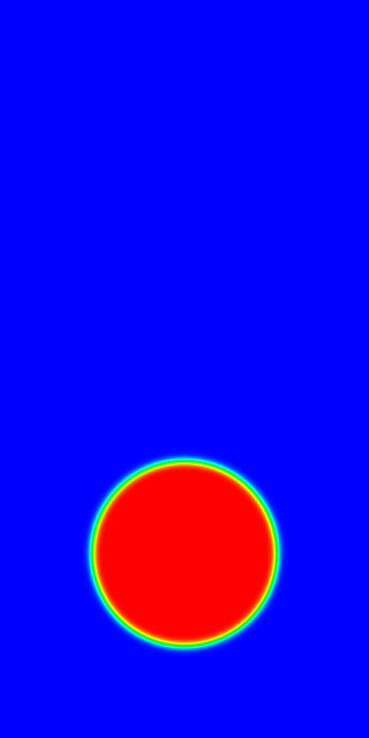}
\caption{$t=0.0$}
\end{subfigure}
\begin{subfigure}{0.16\textwidth}
\centering
\includegraphics[width=1\textwidth]{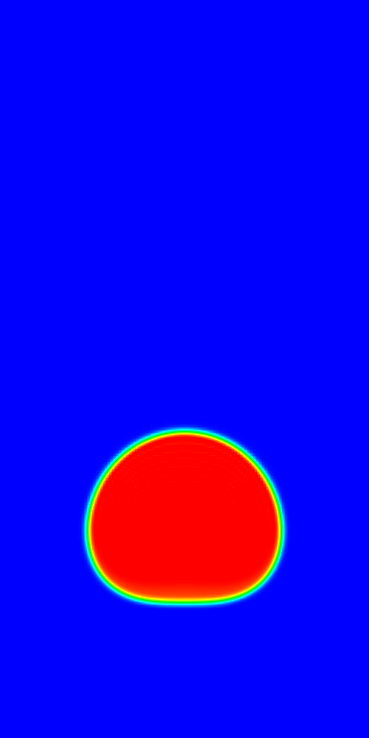}
\caption{$t=0.6$}
\end{subfigure}
\begin{subfigure}{0.16\textwidth}
\centering
\includegraphics[width=1\textwidth]{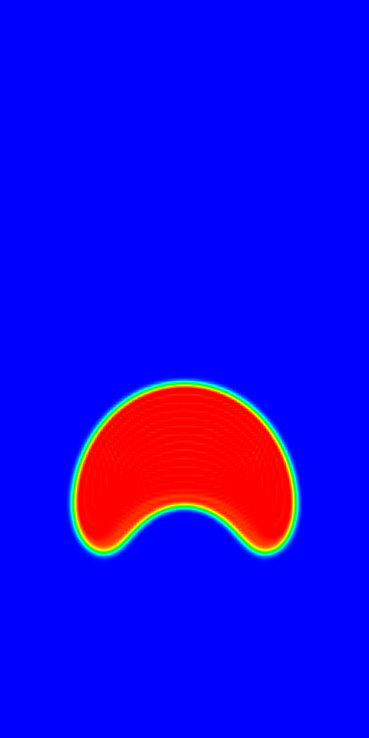}
\caption{$t=1.2$}
\end{subfigure}
\begin{subfigure}{0.16\textwidth}
\centering
\includegraphics[width=1\textwidth]{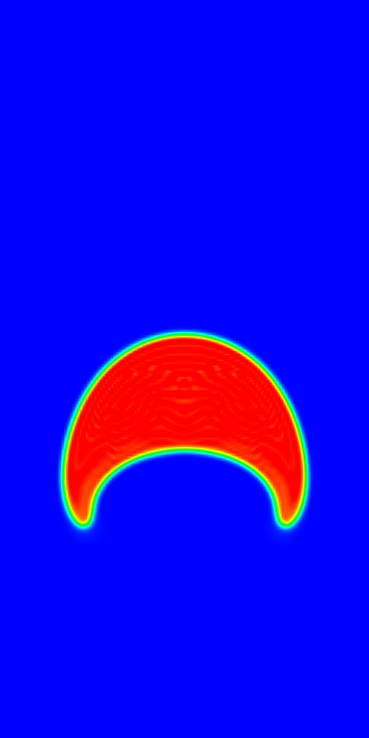}
\caption{$t=1.8$}
\end{subfigure}
\begin{subfigure}{0.16\textwidth}
\centering
\includegraphics[width=1\textwidth]{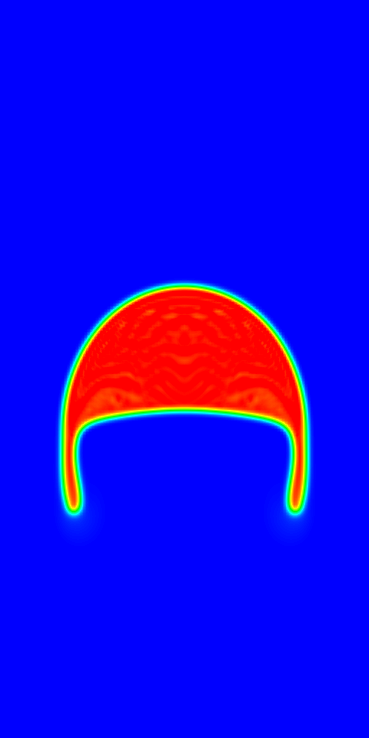}
\caption{$t=2.4$}
\end{subfigure}
\begin{subfigure}{0.16\textwidth}
\centering
\includegraphics[width=1\textwidth]{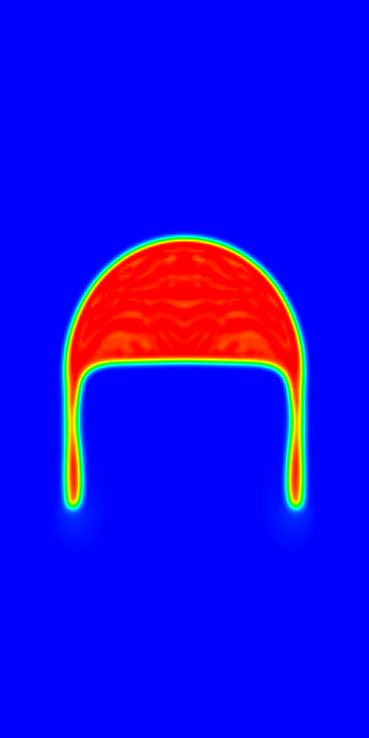}
\caption{$t=3.0$}
\end{subfigure}
\caption{Case 2. Visualization of the phase field $\phi_2$ for element size $h =1/128$.}
\label{fig: case 2 phi}
\end{figure}

\newpage
Next, we verify conservation of the volume fractions and energy dissipation for both cases in \cref{fig: Volume,fig: Energy}, respectively. We observe that the total volume fractions are preserved throughout the simulations for all element sizes $h=1/32, 1/64, 1/128$. The errors of the volume fractions are zero up to machine precision. Finally, the energy evolution plots confirm the energy-dissipative structure of the method. These results are compatible with \cref{thm:discreten_stable}.

\begin{figure}[!ht]
\begin{subfigure}{0.49\textwidth}
\centering
\includegraphics[width=0.95\textwidth]{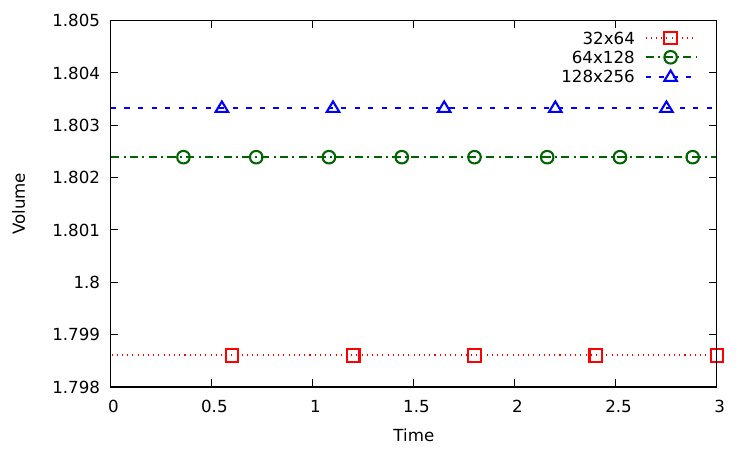}
\caption{Volume fraction evolution fluid 1 - Case 1.}
\end{subfigure}
\begin{subfigure}{0.49\textwidth}
\centering
\includegraphics[width=0.95\textwidth]{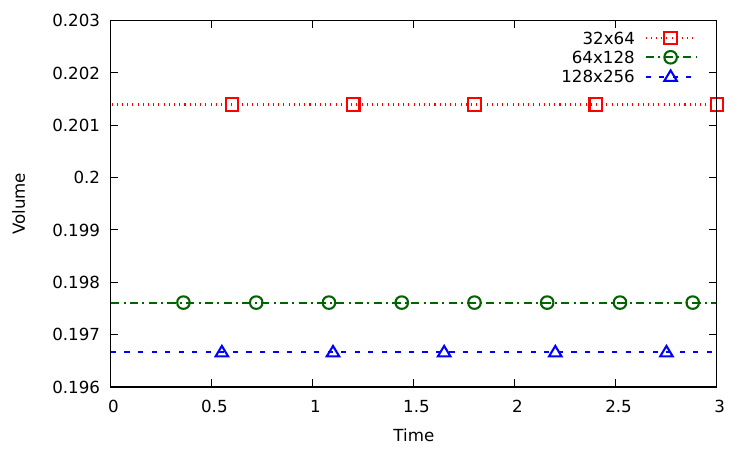}
\caption{Volume fraction evolution fluid 2 - Case 1.}
\end{subfigure}
\begin{subfigure}{0.49\textwidth}
\centering
\includegraphics[width=0.95\textwidth]{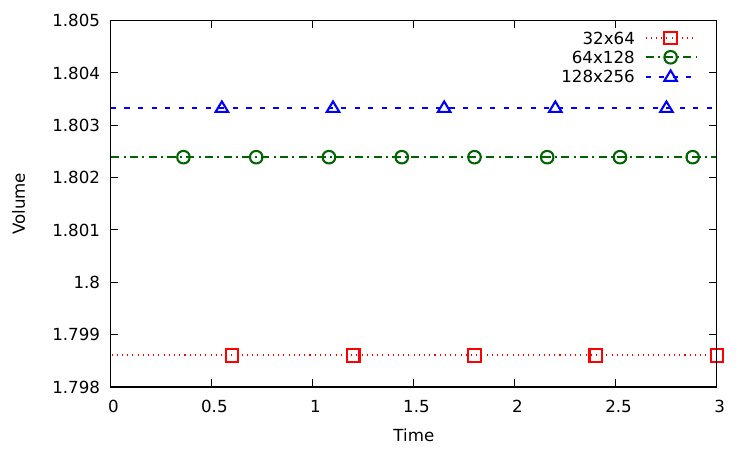}
\caption{Volume fraction evolution fluid 1 - Case 2.}
\end{subfigure}
\begin{subfigure}{0.49\textwidth}
\centering
\includegraphics[width=0.95\textwidth]{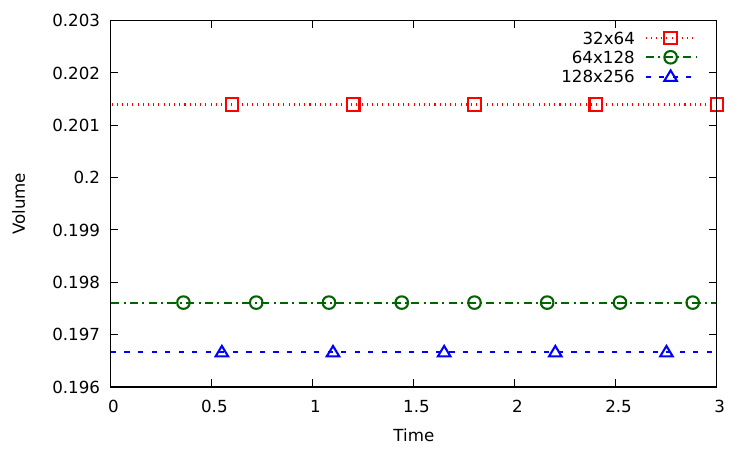}
\caption{Volume fraction evolution fluid 2 - Case 2.}
\end{subfigure}
\centering
\begin{subfigure}{0.49\textwidth}
\centering
\includegraphics[width=0.95\textwidth]{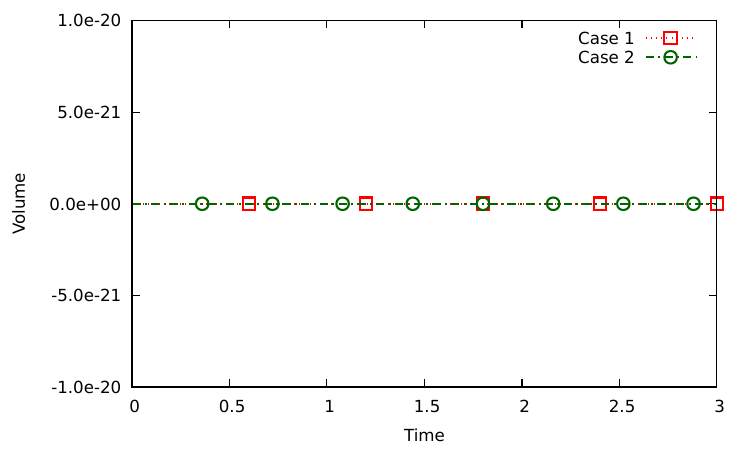}
\caption{Volume fraction evolution error fluid 1. Cases 1 and 2.}
\end{subfigure}
\begin{subfigure}{0.49\textwidth}
\centering
\includegraphics[width=0.95\textwidth]{figures/Volume_phase1_128.pdf}
\caption{Volume fraction evolution error fluid 2. Cases 1 and 2.}
\end{subfigure}
\caption{Evolution of the volume conservation (error); (a), (b) Case 1 for $h = 1/32, 1/64, 1/128$, (c), (d) Case 2 for $h = 1/32, 1/64, 1/128$, (e),(f) Error for $h = 1/128$ for fluid 1 and 2.}
\label{fig: Volume}
\end{figure}

\begin{figure}[!ht]
\begin{subfigure}{0.49\textwidth}
\centering
\includegraphics[width=0.95\textwidth]{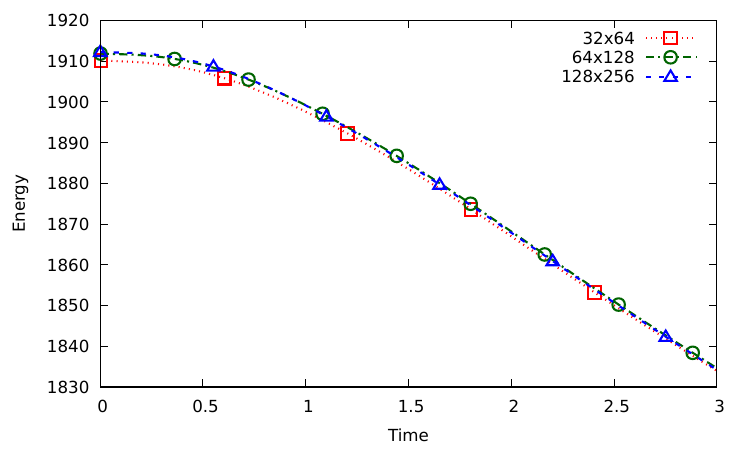}
\caption{Case 1.}
\end{subfigure}
\begin{subfigure}{0.49\textwidth}
\centering
\includegraphics[width=0.95\textwidth]{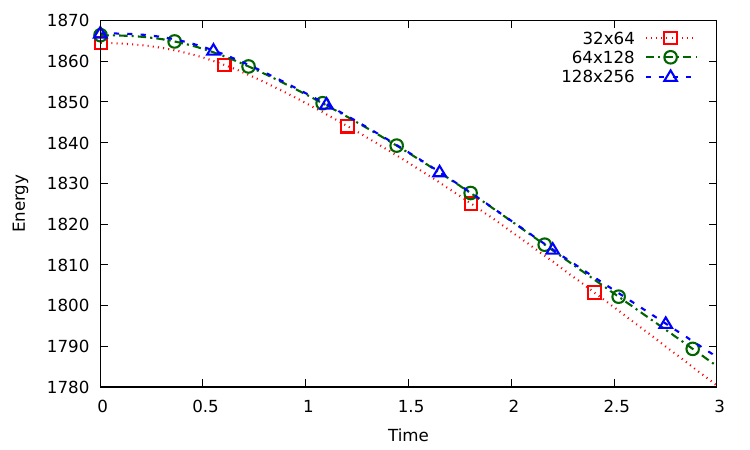}
\caption{Case 2.}
\end{subfigure}
\caption{Evolution of the energy $E$ for different mesh widths $h = 1/32, 1/64, 1/128$.}
\label{fig: Energy}
\end{figure}
\clearpage
To compare the simulations quantitatively with benchmark data, we track the bubble center of mass and rise velocity. These quantities are respectively defined by
    \begin{align}
        y_b := \dfrac{\int_{\phi_2 > 1/2} y ~{\rm d}x}{\int_{\phi_2 > 1/2} ~ {\rm d}x},\qquad
        v_b := \dfrac{\int_{\phi_2 > 1/2} v_2 ~{\rm d}x}{\int_{\phi_2 > 1/2} ~ {\rm d}x},
    \end{align}
where $\{\phi<0\}$ denotes the bubble region.

In \cref{fig: case 1 CoM,fig: case 1 RV,fig: case 2 CoM,fig: case 2 RV}, we show for different mesh sizes the center of mass and rise velocity for both cases. We compare these results with reference data from the literature. These are the simulations performed using the TP2D, FreeLIFE, and MooNMD codes \cite{hysing2009quantitative}, as well as the (single-phase field) NSCH models proposed by Abels et al. \cite{abels2012thermodynamically}, Boyer \cite{boyer2002theoretical}, and Ding et al. \cite{ding2007diffuse}. These (single phase-field) NSCH simulations were conducted by Aland and Voigt \cite{aland2012benchmark}. Furthermore, we compare with the volume-averaged and mass-averaged velocity formulations of the (single-phase field) NSCH model provided by \cite{ten2024divergence} and \cite{brunk2026simple}, respectively. In the figures we denote the single phase-field volume-averaged and mass-averaged velocity formulations as `$\text{NSCH}_{\text{vol}}$', and `$\text{NSCH}_{\text{mass}}$' respectively, and we denote the current computations by `$\text{NSCH}_{\text{mix}}$'.

\begin{figure}[!ht]
\begin{subfigure}{0.49\textwidth}
\centering
\includegraphics[width=0.95\textwidth]{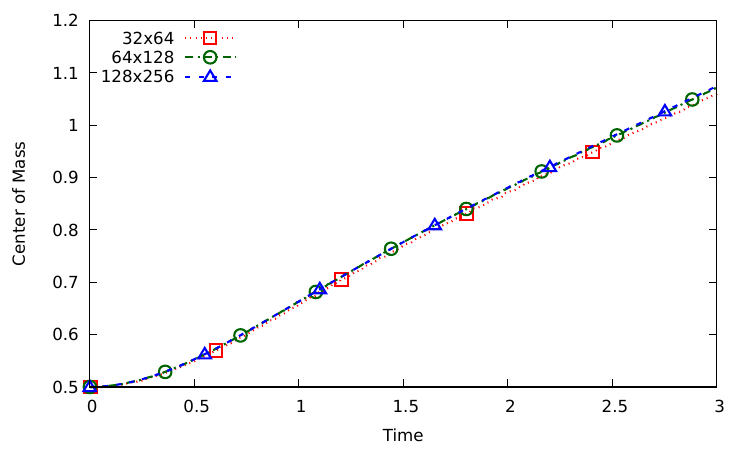}
\caption{$h = 1/32, 1/64, 1/128$.}
\end{subfigure}
\begin{subfigure}{0.49\textwidth}
\centering
\includegraphics[width=0.95\textwidth]{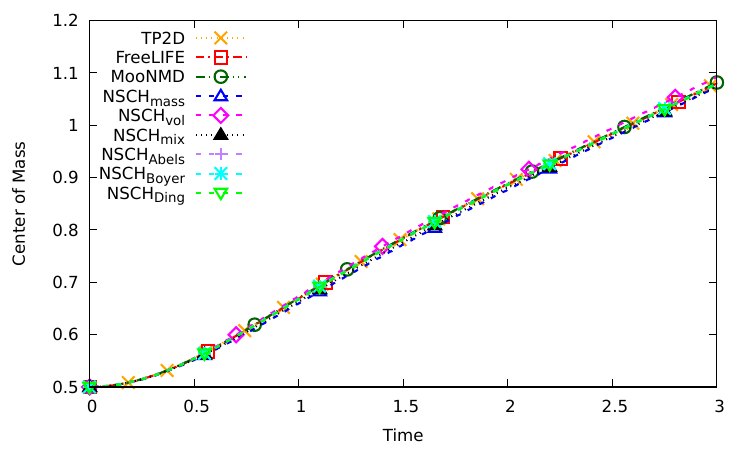}
\caption{Comparison with data from the literature.}
\end{subfigure}
\caption{Case 1. Center of mass (a) for different mesh sizes, and (b) a comparison of the finest mesh results to reference data.}
\label{fig: case 1 CoM}
\end{figure}

\begin{figure}[!ht]
\begin{subfigure}{0.49\textwidth}
\centering
\includegraphics[width=0.95\textwidth]{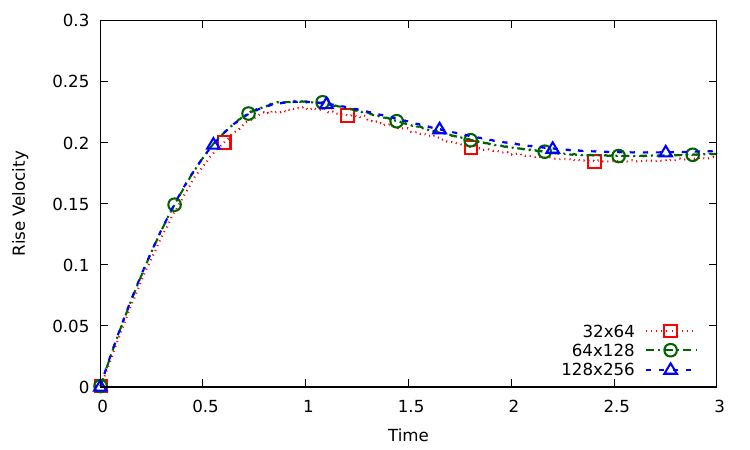}
\caption{$h = 1/32, 1/64, 1/128$.}
\end{subfigure}
\begin{subfigure}{0.49\textwidth}
\centering
\includegraphics[width=0.95\textwidth]{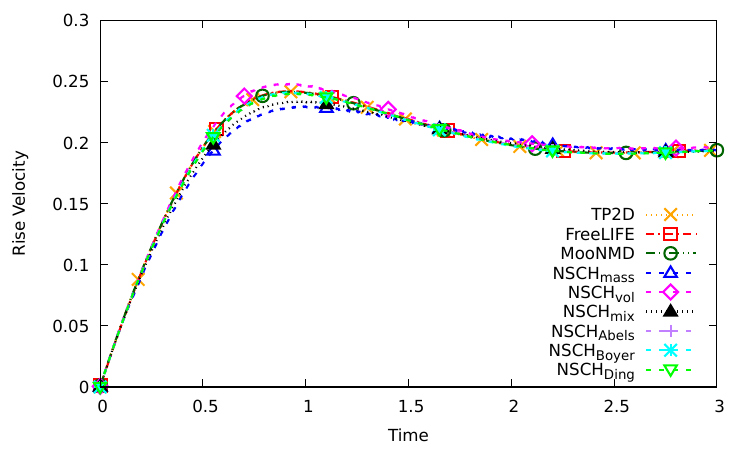}
\caption{Comparison with data from the literature.}
\end{subfigure}
\caption{Case 1. Rise velocity (a) for different mesh sizes, and (b) a comparison of the finest mesh results to reference data.}
\label{fig: case 1 RV}
\end{figure}

\begin{figure}[!ht]
\begin{subfigure}{0.49\textwidth}
\centering
\includegraphics[width=0.95\textwidth]{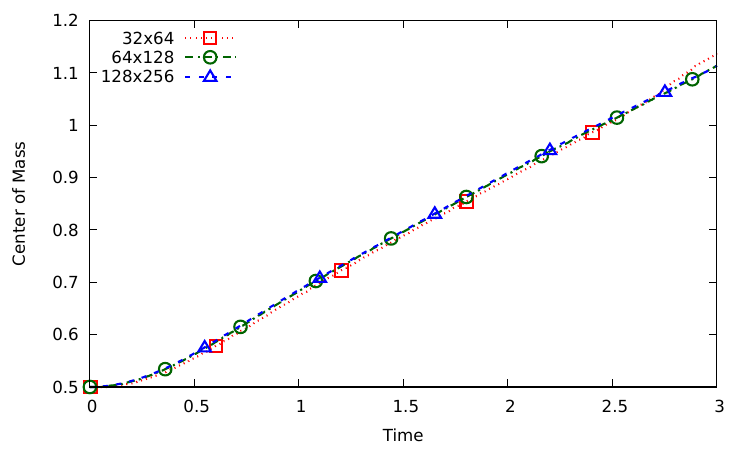}
\caption{$h = 1/16, 1/32, 1/64, 1/128$.}
\end{subfigure}
\begin{subfigure}{0.49\textwidth}
\centering
\includegraphics[width=0.95\textwidth]{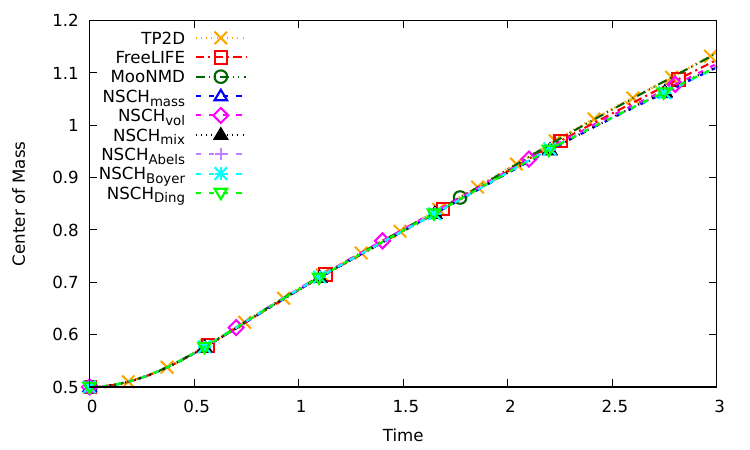}
\caption{Comparison with data from the literature.}
\end{subfigure}
\caption{Case 2. Center of mass (a) for different mesh sizes, and (b) a comparison of the finest mesh results to reference data.}
\label{fig: case 2 CoM}
\end{figure}

\begin{figure}[!ht]
\begin{subfigure}{0.49\textwidth}
\centering
\includegraphics[width=0.95\textwidth]{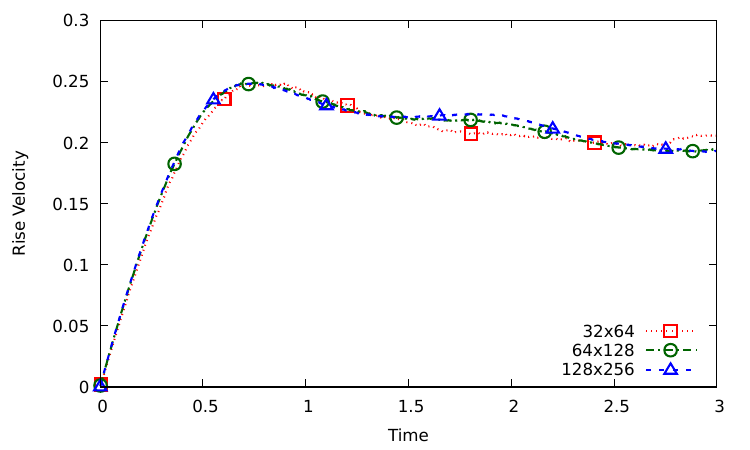}
\caption{$h = 1/32, 1/64, 1/128$.}
\end{subfigure}
\begin{subfigure}{0.49\textwidth}
\centering
\includegraphics[width=0.95\textwidth]{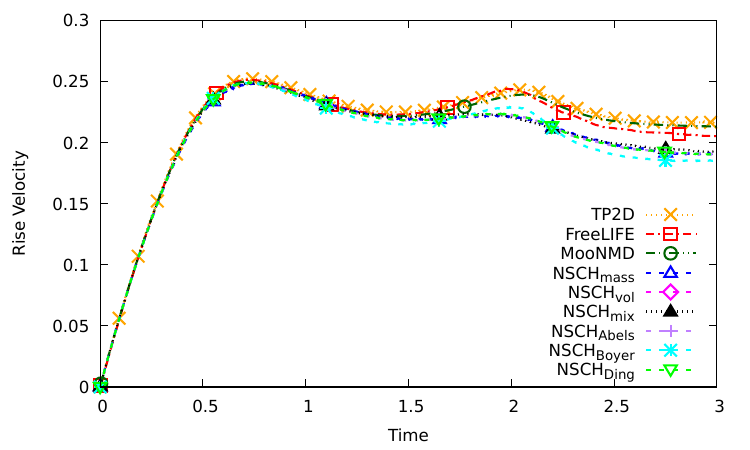}
\caption{Comparison with data from the literature.}
\end{subfigure}
\caption{Case 2. Rise velocity (a) for different mesh sizes, and (b) a comparison of the finest mesh results to reference data.}
\label{fig: case 2 RV}
\end{figure}

For both benchmark cases, the center of mass agrees well with the reference data. The rise velocity also closely follows the reference trends. In Case 2, larger deviations appear for $t>1.5$ when compared with the TP2D, FreeLIFE, and MooNMD data. In this later regime, however, the present results remain close to
the other NSCH computations, indicating that the difference is mainly associated with the differences between sharp-interface and diffuse-interface formulations, rather than the particular NSCH formulation/discretization.

\subsection{Three-fluid problem}\label{subsec:risingbubblethree}

We also consider a two-dimensional version of the three-fluid rising-bubble benchmark. The setup is such that a gas bubble (phase $1$) rises from the lower liquid layer (phase $2$) toward the liquid--liquid interface separating phases $2$ and $3$. 
To reduce the computational cost, we compute only half of the symmetric domain and impose symmetry boundary conditions on the symmetry axis. The half-domain is discretized by $64\times480$ finite elements. We use $\Delta t=h$ and $\varepsilon=h$, where $h$ denotes the mesh width. 

The target surface tensions are prescribed as
\begin{align}
  \gamma_{12}=\gamma_{13}=0.07~\mathrm{N\,m^{-1}},\qquad
  \gamma_{23}=0.05~\mathrm{N\,m^{-1}}.
\end{align}
Using the algorithm provided in \cite{surfacetension2026}, we calibrate the capillary matrix $\boldsymbol{\kappa}$ and interface width parameters to match the target surface tensions and target interface width $\varepsilon$:
\begin{subequations}
    \begin{align}    \bar{\boldsymbol{\kappa}} =&~10^{-3}\begin{pmatrix}
4.1361833328 & -6.5504823549  & -1.7218843107 \\
-6.5504823549  & 6.5504823549
 & -6.5504823549 \\
-1.7218843107  &  -6.5504823549  &  4.1361833328 
\end{pmatrix}\\
\bar{\varepsilon}_{12} = &~0.0819004278, \quad \bar{\varepsilon}_{13} = 0.0581144728, \quad \bar{\varepsilon}_{23} = 0.0819004278, \\
r_1 : &~ \varepsilon_0 =  1.07546309 \times 10^{-3}, \qquad r_2 : \varepsilon_0 = 1.55942257  \times 10^{-3}. 
\end{align}
\end{subequations}
The other material parameters are listed in \cref{table:N3-rising-bubble}.
\begin{table}[!ht]
\centering
\begin{tabular}{lcc}
\hline
phase & density $\rho$ [$\mathrm{kg\,m^{-3}}$] & viscosity $\eta$ [$\mathrm{Pa\,s}$] \\
\hline
bubble ($\phi_1$) & $1$ & $10^{-4}$ \\
heavy liquid ($\phi_2$) & $1200$ & $0.15$ \\
light liquid ($\phi_3$) & $1000$ & $0.10$ \\
\hline
\end{tabular}
\caption{Material parameters for the three-fluid rising-bubble problem.}
\label{table:N3-rising-bubble}
\end{table}
Gravity acts in the negative vertical direction, $ \mathbf g=-g\mathbf e_y$, $g=9.81~\mathrm{m\,s^{-2}}$. On the outer boundary we impose no-penetration conditions for the velocity. For the phase fields and chemical potentials, homogeneous Neumann conditions are
used.

In this two-dimensional setting, the critical size for penetration is estimated by balancing buoyancy against the capillary force at the liquid--liquid interface. For the parameters used here, the critical radius is $r_p\approx 1.8\times10^{-3}\,{\rm m}$. The resulting critical radius for the two-dimensional case is lower than that of the axisymmetric case (see e.g. \cite{surfacetension2026}).

\begin{remark}[Derivation critical radius for penetration]
For a circular bubble of radius $r$ and area $A=\pi r^2$, the buoyancy force per unit out-of-plane length is estimated by $    F_b=(\rho_3-\rho_1)gA$. The capillary force opposing penetration is estimated by $F_\gamma=2\gamma_{23}$, because the liquid--liquid interface meets the bubble in two points in the two-dimensional cross-section. The critical area is obtained from $F_b=F_\gamma$, which provides $A_p=\pi r_p^2 = (2\gamma_{23})/((\rho_3-\rho_1)g)$ so that the corresponding critical radius is 
\begin{align}\label{eq:rp_criterion}
    r_p  = \sqrt{ \frac{2\gamma_{23}}{\pi(\rho_3-\rho_1)g} }.
\end{align}
\end{remark}

We consider two bubble radii $ r_1=1.5\times10^{-3}\,{\rm m}$ and $ r_2=2.0\times10^{-3}\,{\rm m}$. The first radius satisfies $r_1<r_p$, so the bubble is expected to remain trapped at the liquid--liquid interface. The second radius satisfies $r_2>r_p$, and the criterion predicts penetration into the upper liquid layer.

\begin{figure}[!ht]
\captionsetup[subfigure]{justification=centering}\begin{center}
\begin{subfigure}{0.12\textwidth}
\centering
\includegraphics[width=1\textwidth]{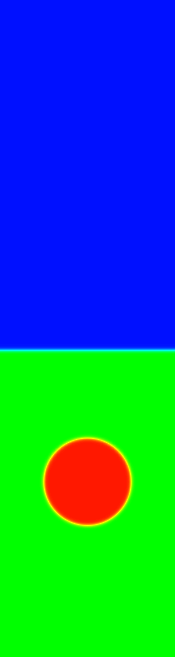}
\caption{$t=0.00$}
\end{subfigure}
\begin{subfigure}{0.12\textwidth}
\centering
\includegraphics[width=1\textwidth]{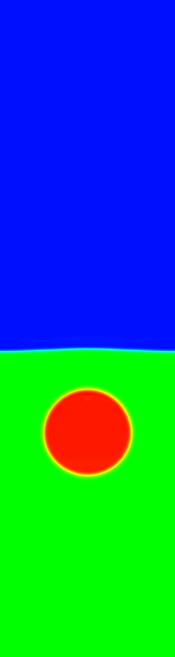}
\caption{$t=0.10$}
\end{subfigure}
\begin{subfigure}{0.12\textwidth}
\centering
\includegraphics[width=1\textwidth]{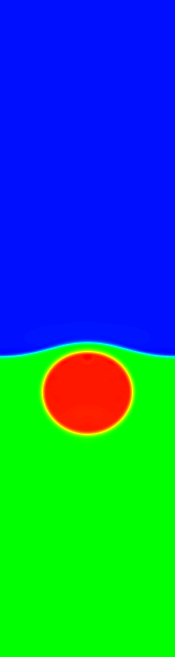}
\caption{$t=0.20$}
\end{subfigure}
\begin{subfigure}{0.12\textwidth}
\centering
\includegraphics[width=1\textwidth]{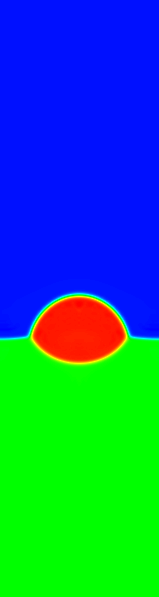}
\caption{$t=0.25$}
\end{subfigure}
\begin{subfigure}{0.12\textwidth}
\centering
\includegraphics[width=1\textwidth]{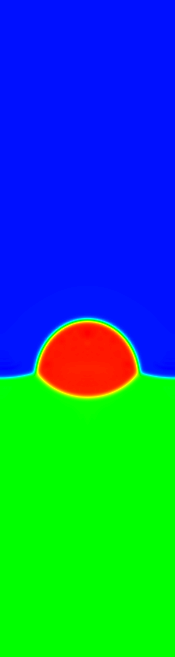}
\caption{$t=0.30$}
\end{subfigure}
\begin{subfigure}{0.12\textwidth}
\centering
\includegraphics[width=1\textwidth]{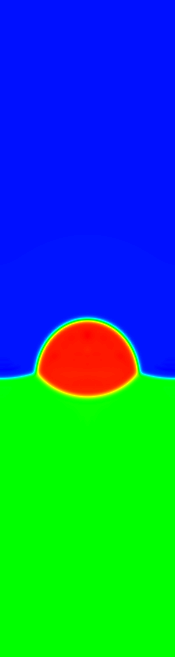}
\caption{$t=0.60$}
\end{subfigure}
\begin{subfigure}{0.12\textwidth}
\centering
\includegraphics[width=1\textwidth]{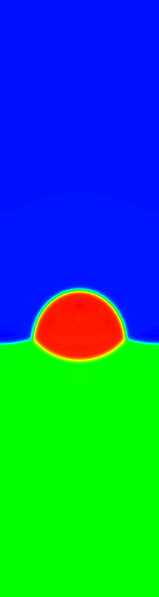}
\caption{$t=0.80$}
\end{subfigure}
\begin{subfigure}{0.12\textwidth}
\centering
\includegraphics[width=1\textwidth]{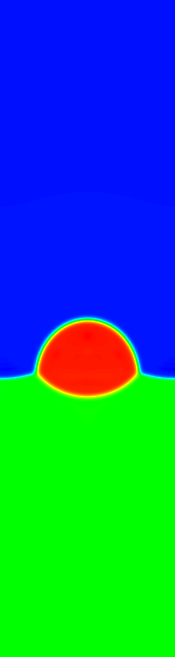}
\caption{$t=1.00$}
\end{subfigure}
\end{center}
\caption{Three-fluid problem. Evolution of the quantity $0.5\phi_1+\phi_2$ for $r=0.0015~{\rm m}$.}
\label{fig: 3 fluid - case 1 phi}
\end{figure}

\begin{figure}[!ht]
\captionsetup[subfigure]{justification=centering}
\begin{subfigure}{0.12\textwidth}
\centering
\includegraphics[width=1\textwidth]{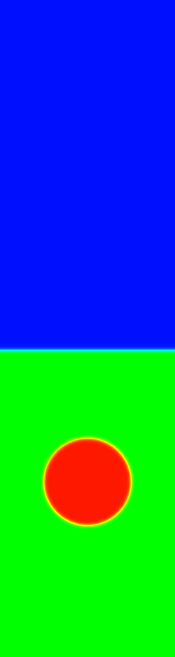}
\caption{$t=0.00$}
\end{subfigure}
\begin{subfigure}{0.12\textwidth}
\centering
\includegraphics[width=1\textwidth]{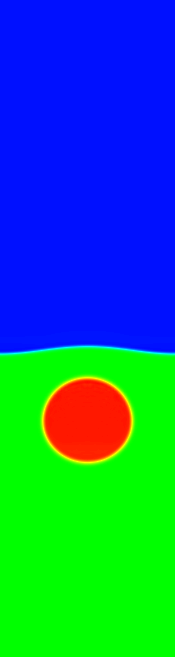}
\caption{$t=0.10$}
\end{subfigure}
\begin{subfigure}{0.12\textwidth}
\centering
\includegraphics[width=1\textwidth]{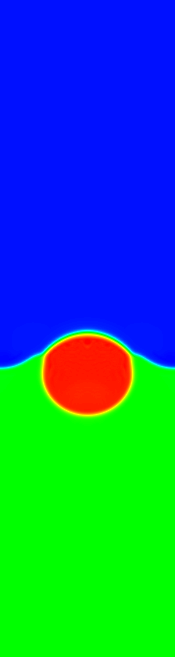}
\caption{$t=0.20$}
\end{subfigure}
\begin{subfigure}{0.12\textwidth}
\centering
\includegraphics[width=1\textwidth]{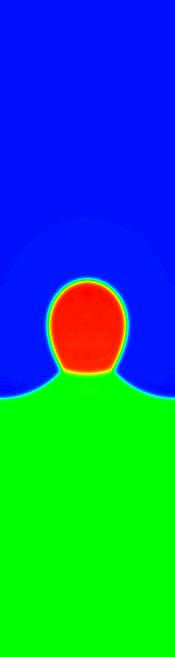}
\caption{$t=0.60$}
\end{subfigure}
\begin{subfigure}{0.12\textwidth}
\centering
\includegraphics[width=1\textwidth]{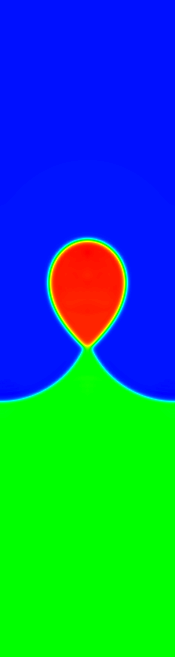}
\caption{$t=1.05$}
\end{subfigure}
\begin{subfigure}{0.12\textwidth}
\centering
\includegraphics[width=1\textwidth]{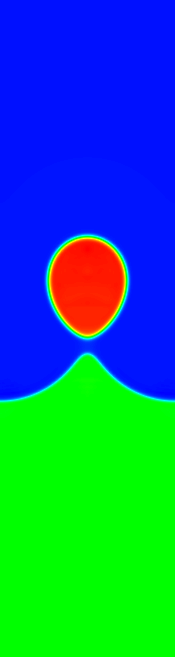}
\caption{$t=1.07$}
\end{subfigure}
\begin{subfigure}{0.12\textwidth}
\centering
\includegraphics[width=1\textwidth]{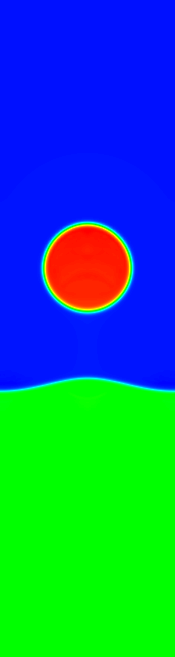}
\caption{$t=1.10$}
\end{subfigure}
\begin{subfigure}{0.12\textwidth}
\centering
\includegraphics[width=1\textwidth]{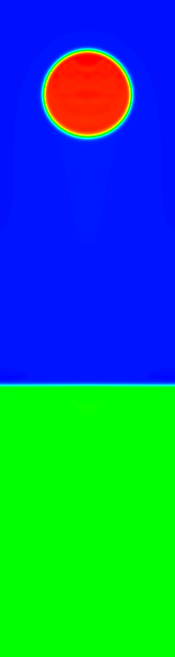}
\caption{$t=1.40$}
\end{subfigure}
\caption{Three-fluid problem. Evolution of the quantity $0.5\phi_1+\phi_2$ for $r=0.0020~{\rm m}$.}
\label{fig: 3 fluid - case 2 phi}
\end{figure}

The computed evolutions are shown in \cref{fig: 3 fluid - case 1 phi,fig: 3 fluid - case 2 phi}. For $r_1=1.5\times10^{-3}\,{\rm m}$, the bubble rises toward the liquid--liquid interface, deforms it, and remains trapped. For $r_2=2.0\times10^{-3}\,{\rm m}$, the bubble overcomes the capillary barrier and passes into the upper liquid layer. The observed transition is therefore
consistent with the two-dimensional criterion in
\eqref{eq:rp_criterion}.

\section{Conclusions}\label{sec:conclusion and outlook}
We have developed a structure-preserving fully-discrete method for an $N$-phase incompressible Navier--Stokes--Cahn--Hilliard model. Starting from the continuum formulation, we first provided the conservation and dissipation properties that should be retained at the discrete level. We then introduced an equivalent reformulation that is suitable for time discretization and finite element approximation, while keeping the treatment of the $N$ phases symmetric.

The resulting scheme preserves the main structural properties of the model. In particular, the fully-discrete formulation maintains the saturation constraint and the phase mass balances, and it satisfies a discrete energy-dissipation law. The numerical experiments confirm the theoretical properties of the method: the discrete energy decays consistently, the relevant conserved quantities are maintained up to solver tolerance, and the method produces stable evolutions for multiphase configurations with
density and viscosity differences.

Several directions remain open for future work. First, it would be valuable to extend the analysis to schemes that also preserve the mixture-aware structural properties of more general $N$-phase closures, including the corresponding mobility and reduction-consistency properties. Second, higher-order temporal discretizations are of interest. A main challenge in this direction is to retain the pointwise saturation constraint at the fully-discrete level while preserving the energy-dissipation structure. Third, the ideas developed here could be extended to compressible $N$-phase flows, for example for the model proposed in
\cite{ten2026compressible}.

\appendix

\section{Proof of the energy evolution}\label{appendix: energy}

\begin{lemma}[Energy evolution]\label[Lemma]{appendix: lem: alternative energy evolution}
The energy evolution of the system \eqref{eq:sys2} follows from a linear combination of \eqref{eq:sys2: mom}-\eqref{eq:sys2: chem} with the weights: $\vv$, $-\frac{1}{2}|\vv|^2 \rho_\mA + \mu_\mA + g y \rho_\mA$, $\lambda$, and $-\partial_t \phi_\mA$.
\end{lemma}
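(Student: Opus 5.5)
We multiply each equation of \eqref{eq:sys2} by its weight, sum, and rearrange everything into the form $\partial_t(K+G+\Psi)+\div(\cdots)=-(\text{dissipation})$; integrating over $\Omega$ then gives \eqref{eq: global energy evolution} with a boundary term. The computation mirrors the proof of the energy evolution for \eqref{eq:sys3} given above, but is carried out on the original momentum equation \eqref{eq:sys2: mom}. There, the kinetic weight $-\tfrac12|\vv|^2\rho_\mA$ in the mass equations must absorb the nonconservative part of the kinetic-energy balance.

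\textbf{Momentum equation.} We take the inner product of \eqref{eq:sys2: mom} with $\vv$ and use the product-rule identity
\begin{align*}
\vv\cdot\bigl(\partial_t(\rho\vv)+\div(\rho\vv\otimes\vv)\bigr)=\partial_t K+\div(K\vv)+\tfrac12|\vv|^2\bigl(\partial_t\rho+\div(\rho\vv)\bigr).
\end{align*}
The viscous term becomes $-\div(\boldsymbol\tau\vv)+\boldsymbol\tau:\nabla\vv$. The gravity term becomes $-\rho\vv\cdot\mathbf g=g\rho\vv\cdot\nabla y$. We keep $\sum_\mB\phi_\mB\vv\cdot\nabla\mu_\mB+\vv\cdot\nabla\lambda$ as they are.

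\textbf{Mass equations.} We multiply \eqref{eq:sys2: mass} by $\rho_\mA$ and sum, using $\rho=\sum_\mA\rho_\mA\phi_\mA$ and $\sum_\mA\div\bJ_\mA=0$. This gives
\begin{align*}
\sum_\mA\rho_\mA\bigl(\partial_t\phi_\mA+\div(\phi_\mA\vv)+\rho_\mA^{-1}\div\bJ_\mA\bigr)=\partial_t\rho+\div(\rho\vv).
\end{align*}
Hence the weight $-\tfrac12|\vv|^2\rho_\mA$ exactly cancels the leftover $\tfrac12|\vv|^2(\partial_t\rho+\div(\rho\vv))$ from the momentum step.

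The weight $gy\rho_\mA$ produces
\begin{align*}
\partial_t G+\div(G\vv)-g\rho\vv\cdot\nabla y+gy\sum_\mA\div\bJ_\mA,
\end{align*}
where the last term vanishes. The term $-g\rho\vv\cdot\nabla y$ cancels the gravity term from the momentum step.

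The weight $\mu_\mA$ produces $\sum_\mA\mu_\mA\partial_t\phi_\mA+\sum_\mA\mu_\mA\div(\phi_\mA\vv)+\sum_\mA\rho_\mA^{-1}\mu_\mA\div\bJ_\mA$. Writing $\mu_\mA\div(\phi_\mA\vv)=\div(\mu_\mA\phi_\mA\vv)-\phi_\mA\vv\cdot\nabla\mu_\mA$ cancels the capillary force term from the momentum step up to a flux.

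\textbf{Chemical-potential and divergence equations.} We multiply \eqref{eq:sys2: chem} by $-\partial_t\phi_\mA$ and use
\begin{align*}
\partial_t\Psi=\sum_\mA\Bigl(\partial_{\phi_\mA}\Psi\,\partial_t\phi_\mA+\partial_{\nabla\phi_\mA}\Psi\cdot\nabla\partial_t\phi_\mA\Bigr).
\end{align*}
This converts $-\sum_\mA\mu_\mA\partial_t\phi_\mA$ into $\partial_t\Psi$ minus the flux $\div(\sum_\mA\partial_t\phi_\mA\,\partial_{\nabla\phi_\mA}\Psi)$, so the $\mu_\mA\partial_t\phi_\mA$ terms cancel. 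Multiplying \eqref{eq:sys2: div} by $\lambda$ gives $\lambda\div\vv+\lambda\sum_\mA\rho_\mA^{-1}\div\bJ_\mA$. The first part combines with $\vv\cdot\nabla\lambda$ into $\div(\lambda\vv)$. The second part combines with $\sum_\mA\rho_\mA^{-1}\mu_\mA\div\bJ_\mA$ into $\sum_\mA g_\mA\div\bJ_\mA$, since $g_\mA=\rho_\mA^{-1}(\mu_\mA+\lambda)$. Integrating by parts,
\begin{align*}
\sum_\mA g_\mA\div\bJ_\mA=\div\Bigl(\sum_\mA g_\mA\bJ_\mA\Bigr)+\sum_{\mA,\mB}\nabla g_\mA\cdot M_{\mA\mB}\nabla g_\mB.
\end{align*}
The last term is nonnegative because of the structure of the mobility matrix.

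\textbf{Collecting terms.} We obtain $\partial_t(K+G+\Psi)$ plus a divergence equal to $-\boldsymbol\tau:\nabla\vv-\sum_{\mA,\mB}\nabla g_\mA\cdot M_{\mA\mB}\nabla g_\mB$. We identify $\boldsymbol\tau:\nabla\vv$ with the viscous part of \eqref{eq:defDissipation} by splitting $\nabla^s\vv$ into its deviatoric and trace parts. Integrating over $\Omega$, the divergence terms vanish under the boundary conditions, and \eqref{eq: global energy evolution} follows.

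The main obstacle is bookkeeping rather than analysis. One must check that the kinetic weight cancels \emph{exactly}, which relies on $\sum_\mA\div\bJ_\mA=0$, and that the pressure contributions close up. This closure uses the divergence constraint \eqref{eq:sys2: div}, which was derived from \eqref{eq: saturation constraint}; the weighted sum itself does not need \eqref{eq: saturation constraint} applied pointwise.
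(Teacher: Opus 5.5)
Your proposal is correct and follows the paper's proof essentially step by step. Both arguments use the same weighted sum and the same kinetic-energy product rule, whose leftover $\tfrac12|\vv|^2(\partial_t\rho+\div(\rho\vv))$ is cancelled by the weight $-\tfrac12|\vv|^2\rho_\mA$ via $\sum_\mA\div\bJ_\mA=0$, and both recombine the $\lambda$- and $\mu_\mA$-terms into $\sum_\mA g_\mA\div\bJ_\mA$. The only difference is cosmetic: the paper rewrites the flux $\div(\sum_\mA\partial_t\phi_\mA\,\partial_{\nabla\phi_\mA}\Psi)$ with the material derivative to expose the Korteweg tensor and records the boundary contribution $\mathcal{B}$ explicitly, whereas you let all divergence terms vanish under the boundary conditions.
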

\begin{proof}
Taking the inner product of the momentum equation \eqref{eq:sys2: mom} with $\vv$ yields:
  \begin{align}\label{eq:proof4: mom}
    0 &= \partial_t K + \div (K \vv) + \frac{1}{2}|\vv|^2 (\partial_t \rho+\div(\rho \vv)) - \vv \cdot \div \mathbf{S}  + \displaystyle\sum_\mA\phi_\mA \vv\cdot \nabla \mu_\mA + \vv\cdot \nabla \lambda - \rho\vv\cdot\mathbf{g},
  \end{align}
  where we have used the identity:
  \begin{subequations}
    \begin{align}
     \partial_t K + \div (K \vv) + \frac{1}{2}|\vv|^2 (\partial_t \rho+\div(\rho \vv)) =&~ \vv\cdot \left( \partial_t (\rho \vv) +  \div(\rho \vv \otimes \vv)\right).
    \end{align}
  \end{subequations}
  Next, multiplying \eqref{eq:sys2: mass} with $\mu_\mA+\rho_\mA g y-\frac{1}{2}|\vv|^2 \rho_\mA $ and \eqref{eq:sys2: chem} with $-\partial_t \phi_\mA$, and subsequently summing over $\mA$ yields:
  \begin{align}\label{eq:proof4: phase+chem}
      0 = &~\partial_t (G + \Psi) + \sum_\mA (\mu_\mA+gy \rho_\mA) \div(\phi_\mA\vv) + \sum_\mA \mu_\mA \rho_\mA^{-1} \div \bJ_\mA \nn\\
      &~- \div \left(\sum_\mA \partial_t \phi_\mA \dfrac{\partial \Psi}{\partial \nabla \phi_\mA} \right)- \frac{1}{2}|\vv|^2 (\partial_t \rho+\div(\rho \vv)), %+ \lambda \partial_t \sum_\mA  \phi_\mA,
  \end{align}
  where we have used the identities:
  \begin{subequations}
      \begin{align}
      \partial_t G = &~ gy \sum_\mA \rho_\mA \partial_t \phi_\mA, \\
      \partial_t \Psi - \div \left( \partial_t \phi_\mA \dfrac{\partial \Psi}{\partial \nabla \phi_\mA} \right) =&~ \sum_\mA\partial_t \phi_\mA \dfrac{\partial \Psi}{\partial \phi_\mA} - \sum_\mA\partial_t \phi_\mA \div \left(\dfrac{\partial \Psi}{\partial \nabla \phi_\mA}\right) ,\\
      \sum_\mA (gy - \frac{1}{2} |\vv|^2 ) \div \bJ_\mA  =&~0,\\
      - \frac{1}{2}|\vv|^2 (\partial_t \rho+\div(\rho \vv)) =&~ -\sum_\mA \rho_\mA \frac{1}{2}|\vv|^2 (\partial_t \phi_\mA +\div(\phi_\mA \vv)).
  \end{align}
  \end{subequations}
  Multiplying \eqref{eq:sys2: div} with $\lambda$ provides:
  \begin{align}\label{eq:proof4: div}
 0 = \lambda\div \vv  + \lambda\displaystyle\sum_\mA \rho_\mA^{-1} \div \bJ_\mA.
 \end{align}
Addition of \eqref{eq:proof4: mom}, \eqref{eq:proof4: div},  and \eqref{eq:proof4: phase+chem} gives:
\begin{align}
  \partial_t (K + G + \Psi)  + \div ((K+\Psi + G) \vv) - \div (\mathbf{S}\vv)  +  \div ( \vv^T \mathbf{K} ) & \nn\\
  + \div( \vv \lambda )  + \div (\sum_\mA (\mu_\mA+\lambda) \rho^{-1}_\mA  \bJ_\mA)- \div \left(\sum_\mA \dot{\phi}_\mA \dfrac{\partial \Psi}{\partial \nabla \phi_\mA} \right) &~= -\mathbf{S}:\nabla \vv\nn\\
  &~~~~+ \sum_\mA \rho^{-1}_\mA \nabla (\mu_\mA+\lambda) \cdot \bJ_\mA,
\end{align}
  where we have used the identities:
  \begin{subequations}
      \begin{align}
     \div (G \vv) = &~- \rho\vv\cdot\mathbf{g} +  \sum_\mA (gy \rho_\mA) \div(\phi_\mA\vv),\\
      \sum_\mA \rho^{-1}_\mA(\mu_\mA+\lambda) \div \bJ_\mA = &~ - \sum_\mA \rho^{-1}_\mA\nabla (\mu_\mA+\lambda) \cdot \bJ_\mA + \sum_\mA \rho^{-1}_\mA\div ((\mu_\mA+\lambda) \bJ_\mA),\\
      \sum_\mA \div (\mu_\mA \phi_\mA \vv) = &~ \sum_\mA \phi_\mA \vv\cdot \nabla \mu_\mA + \mu_\mA \div (\phi_\mA \vv),\\
      \div( \vv \lambda ) =&~  \vv \cdot \nabla \lambda + \lambda\div(\vv),\\
      - \div \left(\sum_\mA \partial_t \phi_\mA \dfrac{\partial \Psi}{\partial \nabla \phi_\mA} \right) =&~ - \div \left(\sum_\mA \dot{\phi}_\mA \dfrac{\partial \Psi}{\partial \nabla \phi_\mA} \right) + \div \left( \vv^T \sum_\mA \nabla \phi_\mA \otimes \dfrac{\partial \Psi}{\partial \nabla \phi_\mA} \right),
  \end{align}
  \end{subequations}
and where the Korteweg tensor $\mathbf{K}$ is given by:
 \begin{align}
     \mathbf{K} = (\displaystyle\sum_\mA (\phi_\mA\mu_\mA)-\Psi)\mathbf{I} +   \sum_\mA \nabla \phi_\mA \otimes \dfrac{\partial \Psi}{\partial \nabla \phi_\mA} .
 \end{align}
 Integration over $\Omega$ provides:
 \begin{align}
   \dfrac{{\rm d}}{{\rm d}t}\mathcal{E} = -\mathcal{D}(\bv,\left\{g_\mA\right\} ) + \mathcal{B},
 \end{align}
 where $\mathcal{B}$ is the boundary contribution:
 \begin{align}
   \mathcal{B} =&~ - \la (K+G+\Psi) \vv, \mathbf{n} \ra_{\partial\Omega} + \la \mathbf{S} \vv, \mathbf{n} \ra_{\partial\Omega} - \la \vv^T\mathbf{K}, \mathbf{n} \ra_{\partial\Omega} \nn\\
   &~- \la \lambda\vv,\mathbf n\ra_{\partial\Omega}  - \la \sum_\mA g_\mA \bJ_\mA , \mathbf{n} \ra_{\partial\Omega} + \la \sum_\mA \dot{\phi}_\mA \dfrac{\partial \Psi}{\partial \nabla \phi_\mA}, \mathbf{n} \ra_{\partial\Omega}, 
 \end{align}
 and where we note the identities: 
  \begin{subequations}
      \begin{align}
        \mathbf{S}:\nabla \vv=&~ 2 \nu \left( \nabla^s \bv - \frac{1}{d} ({\rm div} \mathbf{v}) \mathbf{I}\right):\left(\nabla^s \bv - \frac{1}{d} ({\rm div} \mathbf{v}) \mathbf{I}\right)+ \nu \left(\bar{\lambda} + \frac{2}{d}\right)\left({\rm div} \mathbf{v}\right)^2 \geq 0,\\
        -\sum_\mA \nabla g_\mA \cdot \bJ_\mA =&~ \displaystyle\sum_{\mA,\mB} \left(\nabla g_\mA\right)^T M_{\mA\mB} \nabla g_\mB \geq 0.
  \end{align}
  \end{subequations}
\end{proof}

%\noindent {\small \textbf{Funding.} 
\subsection*{Funding}
MtE acknowledges support from the German Research Foundation (Deutsche Forschungsgemeinschaft DFG), project number 566600860. A.B. acknowledges support by the German Research Foundation (DFG) via TRR 146, subproject C3, project number 233630050 and via SPP 2256 within the project ``Variational quantitative phase-field modeling and simulation of powder bed fusion additive manufacturing'' project number 441153493. MtE and AB acknowledges support by the RMU via the joined project "MULTIMIX".

\bibliography{jfm}

\end{document}